\documentclass[12pt]{article} 
\usepackage[utf8]{inputenc}
\usepackage[margin=1.06in]{geometry}
\usepackage{amsmath,amsthm}
\usepackage{mathtools}
\usepackage{amssymb}
\usepackage{amsopn}
\usepackage{bbm}
\usepackage{color}
\usepackage{hyperref}
\usepackage{url}
\usepackage{tikz}
\usepackage{tikz-cd}
\usepackage[font=footnotesize,labelfont=bf]{caption}
\usepackage{enumerate}
\usepackage{placeins}
\usepackage{eso-pic}
\usepackage{caption}

\usepackage{mathrsfs, stmaryrd}
\usepackage{a4wide}

\usepackage{hyperref}

\newtheorem{theorem}{Theorem}[section] 
\newtheorem{corollary}[theorem]{Corollary}
\newtheorem{definition}[theorem]{Definition}
\newtheorem{lemma}[theorem]{Lemma}
\newtheorem{remark}[theorem]{Remark}
\newtheorem{proposition}[theorem]{Proposition}

\def\ba#1\ea{\begin{align*}#1\end{align*}}
\def\beq#1\eeq{\begin{eqnarray}#1\end{eqnarray}}

\def \It\^o {It\^{o} }

\newcommand{\dexp}{\rm dexp}
\newcommand{\ad}{\rm ad}
\newcommand{\Id}{\rm Id}
\newcommand{\Tr}{\rm Tr}
\newcommand{\tr}{\rm tr}
\newcommand{\Cov}{\mathrm{Cov}}

\newcommand{\E}{\mathbb{E}}

\newcommand{\Sym}{{\rm Sym}}

\hypersetup{
  colorlinks = true, 
  urlcolor = blue, 
  linkcolor = blue, 
  citecolor = red 
}
\title{Higher-order variation and\\
pathwise It\^o calculus on manifolds}
\author{Rama Cont \\Mathematical Institute\\ University of Oxford }

\date{2026}

\begin{document}

\maketitle

\begin{abstract}
We develop an intrinsic
calculus for smooth functions of paths of arbitrarily low regularity on smooth manifolds.
The regularity of paths is defined in terms of a $p$-th variation
tensor along a sequence of partitions, for an arbitrary integer $p$;
this tensor is constructed as a local symmetric tensor measure along
the path. We define pathwise integrals of closed one-forms along paths
with finite $p$-th variation and derive a change of variable formula
for smooth functions of such paths. For $p=2$,
our results give a manifold version of H. F\"ollmer's pathwise It\^o calculus.

Our construction only requires an affine connection on the manifold and may be viewed as a higher-order analogue of
L. Schwartz's second-order differential geometry.
The connection provides a splitting of higher-order tangent vectors
into symmetric tensor components and leads to a geometric transfer
principle: the change-of-variable formula defines an intrinsic,
connection-independent functional of the reduced $p$-jet of the
test function, whose canonical highest-order component is determined
by the $p$-th variation tensor.

\noindent
Although our results are purely geometric, they apply to manifold-valued stochastic processes with highly irregular paths and yield a higher-order It\^o-type calculus for such processes. We illustrate this calculus for  exponential lifts of fractional Brownian motions to Riemannian  manifolds and Lie groups.
\end{abstract} 
{\it Mathematics Subject Classification}:	60H10  58A20 58C20 58C25 58C35 	  	49Q15 	60L99
\newpage

\tableofcontents

\newpage
\section{A calculus for irregular paths on smooth manifolds}

Starting with It\^o's
early work on stochastic analysis on manifolds
\cite{ito1950,ito1962}, various formulations have been proposed for
an intrinsic calculus for irregular manifold-valued paths. Stochastic differential geometry
 has   focused on diffusion processes on manifolds \cite{elworthy2006,elworthy2007,ikedawatanabe} and manifold-valued semimartingales \cite{emery,meyer1985,schwartz1984}, the latter naturally leading to Schwartz's ``second-order geometry" \cite{schwartz1982}. More recently, the extension of rough path theory to manifolds \cite{armstrong2022,cass2012,cass2015} have allowed to consider more general, and more irregular processes on manifolds \cite{BaudouinCoutin} but at the cost of requiring additional ingredients, such as rough-path enhancements, beyond the geometry of the underlying path.

It\^o's original approach \cite{ito1962} was based on approximating
a Brownian path on a manifold by the geodesic polygon obtained by
joining successive points sampled along a partition, and defining
parallel displacement along the Brownian path as the limit of
parallel displacement along these polygonal approximations.

In the present work, we show that this approach   may be extended to  paths of arbitrary regularity on a manifold.
Using It\^o's geodesic polygonal approximations and the corresponding 'logarithmic' increments, we construct the $p-$th order variation tensor of a path as a local tensor measure \cite{brena2024} along the path, in duality with symmetric $p-$forms.
 We develop an  intrinsic calculus for smooth functionals of irregular paths with finite $p-$th variation for any integer $p$, and show how closed one-forms may be integrated along such  paths. 
This approach allows us to define a higher-order It\^o-type calculus  \cite{CP2019} on manifolds with an affine connection.

For $p=2$, our results extend  F\"ollmer's pathwise It\^o calculus \cite{follmer1981} to manifold-valued paths. For general $p\in \mathbb{N},$ our construction lifts the  framework of Cont \& Perkowski \cite{CP2019} to the case of manifold-valued paths.

Our construction may also be viewed as a higher-order analogue of
Laurent Schwartz's second-order differential geometry
\cite{emery,emery2006,schwartz1982,schwartz1984}. Unlike the stochastic setting in which this geometry
originally arose, the present framework is purely pathwise. A choice
of affine connection provides a splitting of higher-order tangent
vectors into symmetric tensor components. In Section~\ref{sec:transfer-principle} we use this
splitting to formulate a geometric transfer principle: the
change-of-variable formula defines an intrinsic,
connection-independent functional of the reduced $p$-jet of the test
function, whose canonical highest-order component is determined by
the $p$-th variation tensor. 
The underlying principle is that jets are the intrinsic polynomial objects, while their decomposition into covariant homogeneous tensor components depends on the chosen geometric structure. This principle also underlies  the construction of regularity structures on manifolds by Hairer and Singh \cite{hairersingh}.

The higher-order geometric structures which arise naturally in this
extension have classical antecedents. Pohl \cite{Pohl1962}, building
on earlier work of Ehresmann \cite{ehresmann1,ehresmann2}, introduced higher-order
tangent, or osculating, bundles and showed that they form a natural
filtration whose successive quotients are symmetric tensor powers of
the tangent bundle. For $p=2$, this geometry is closely related to
the second-order differential geometry of L. Schwartz
\cite{schwartz1982,emery}.
\clearpage

Our construction leads to a higher order  It\^o-type calculus.
The  integral defined in Theorem \ref{thm:change-of-variable} is intrinsically of
It\^o type: it is defined as a non-anticipative limit of covariant left
Riemann--Taylor sums, with the $p$-th variation tensor appearing
explicitly as the highest-order correction term.
In this sense, our approach is a direct extension of Schwartz's It\^o calculus on manifolds \cite{schwartz1984} but without any probabilistic ingredient.
In the case $p=2$ our change of variable formula recovers the It\^o formula for Brownian motion on a manifold \cite{elworthy2007} and manifold-valued  semimartingales \cite{emery,schwartz1984}. 

However, our results are purely geometric, do not rely on any probabilistic construction and extend to arbitrary $p\in \mathbb{N}$, covering processes which are not semimartingales. 
We thus extend the stochastic change of variable formula on manifolds \cite{belopolskaya1990,elworthyICM,elworthy2007,ito1950}  to paths and processes of arbitrary roughness. We illustrate this in Section \ref{sec.examples} with the example of manifold-valued fractional processes and fractional processes in Lie groups.
The example of fractional rotations in
$\mathrm{SO}(3)$ in Section \ref{sec:SO3} illustrates how the $p$-th variation tensor
records the nonlinear geometry of a non-commutative Lie group through
the differential of the exponential map, while the resulting calculus
remains insensitive to torsion.
\vskip 1cm

Related integration theories have been developed using rough paths on manifolds \cite{armstrong2022,cass2012,cass2015}.
With the exception of \cite{armstrong2022},
these constructions are based on
geometric rough paths and lead to a
Stratonovich-type calculus. 
Non-geometric rough-path integration on manifolds was developed by Armstrong et al. \cite{armstrong2022} for paths with finite $q-$variation with $q<3$ and extended by Ferrucci \cite{ferrucci2025} to branched rough paths of arbitrary roughness.
These constructions require 
higher-order rough-path and bracket enhancements beyond the trace of the path.
In contrast, our construction  only involves the manifold geometry and function(al)s of the underlying path: it does not require any rough path enrichment of the paths considered, and all higher order tensors involved are constructed from the path itself by taking limits of geodesic polygonal approximations along a sequence of partitions, as in It\^o's original approach \cite{ito1962}. 

\paragraph{Outline} 

Section~\ref{sec.pthvariation} introduces the notion of finite $p$-th order variation tensor along a prescribed sequence of partitions for manifold-valued paths and derives its transformation rule under smooth maps. Section~\ref{sec:covariant-taylor} develops the covariant Taylor expansion along geodesics associated with an affine connection. Section~\ref{sec:change-of-variable} constructs the integral of a covariant $(p-1)$-jet, derives the corresponding
change of variable formula (Theorem \ref{thm:change-of-variable}),
extends the construction from exact to closed one-forms, and proves
uniform convergence and transformation identities for
$p$-th variation.  

Section~\ref{sec:transfer-principle} gives a geometric interpretation of this formula in terms
of higher-order tangent vectors and jets. Using the 
higher-order tangent geometry of Pohl \cite{Pohl1962}, we use an affine connection to
split the filtered bundle of order-$p$ tangent vectors into symmetric
tensor components. This identifies the change-of-variable formula
with an intrinsic functional of the reduced $p$-jet of the test
function and yields a higher-order analogue of the
Schwartz--\'Emery transfer principle \cite{emery2006,schwartz1982}.

Finally, Section~\ref{sec.examples} illustrates the theory through Riemannian Brownian motion and the exponential lift of fractional Brownian motion to spaces of constant curvature and the Lie group $SO(3)$, showing how the higher-order correction terms encode the geometry through the differential of the exponential map.

\section{Manifold-valued paths with finite $p$-th order variation}\label{sec.pthvariation}

Let $M$ be a smooth $d$-dimensional manifold without boundary and 
$p\geq 2$ be an integer. We assume that $M$ is endowed with a smooth
affine connection $\nabla$.
For basic definitions concerning affine connections,
geodesics, covariant derivatives and jet bundles we refer to
Bourguignon \cite{bourguignon2022} and Kobayashi \& Nomizu \cite{kobayashi1996}.
We denote by
\[
    \exp\colon \mathcal D\subset TM\longrightarrow M
\]
the exponential map associated with $\nabla$, defined on an open
neighborhood $\mathcal D$ of the zero section of $TM$. For
$(x,v)\in\mathcal D$, the curve
\[
    \gamma_{x,v}(s):=\exp_x(sv),
    \qquad 0\leq s\leq 1,
\]
is the geodesic starting from $x$ with initial velocity $v$, i.e.
\[
    \gamma_{x,v}(0)=x,
    \qquad
    \dot\gamma_{x,v}(0)=v,
    \qquad
    \nabla_{\dot\gamma_{x,v}}\dot\gamma_{x,v}=0.
\]
We  use the notation $    \Sym^p(TM)$ and
$    \Sym^p(T^*M)$
for the bundles of symmetric contravariant and covariant $p$-tensors,
respectively.
 $X\colon[0,T]\to M$ denotes a continuous path
, and
\[
    \pi=(\pi_n)_{n\geq 1},
    \qquad
    \pi_n=\{0=t_0^n<t_1^n<\cdots<t_{N_n}^n=T\},
\]
is a sequence of partitions of $[0,T]$ such that
$
    |\pi_n|
    :=
    \max_{0\leq i<N_n}(t_{i+1}^n-t_i^n)
    \longrightarrow 0.$
    
    We fix an auxiliary Riemannian metric $g$ on $M$ and use the
corresponding norms $|\cdot|$ on tangent and tensor spaces. The metric
is used only to formulate uniform remainder estimates and need not be
compatible with the connection $\nabla$.
Its only role in this
section is to provide norms on the tensor bundles and  define total
variation. Since the image $X([0,T])$ is compact, all choices of smooth
Riemannian metric give equivalent norms on the part of the bundles
visited by $X$, and hence none of the notions introduced below depends
on this auxiliary choice.
\subsection{Paths, increments and local tensor measures}

Following It\^o \cite{ito1962}, we define 'increments' of a manifold-valued path using a map $e:\mathcal U\subset M\times M\mapsto TM$:
\begin{definition}[Admissible increment map]
Let $\mathcal U\subset M\times M$ be an open neighborhood of the
diagonal $
    \Delta_M:=\{(x,x):x\in M\}.$
A smooth map $ e\colon\mathcal U\longrightarrow TM$ is an \emph{admissible increment map} if
$e(x,y)\in T_xM$ for every $(x,y)\in\mathcal U$ and, for every $x\in M$,
\begin{equation}
    e(x,x)=0_x,
    \qquad
    D_2e(x,x)=\Id_{T_xM}.
    \label{eq:admissible-increment}
\end{equation}
Here $D_2e(x,x)$ denotes differentiation with respect to the second
variable, using the canonical identification of the vertical tangent
space of $TM$ at $0_x$ with $T_xM$.
\end{definition}

The affine connection provides a canonical example: the exponential map
admits a smooth local inverse and we define the 'logarithmic increment' as
\begin{equation}
    e_\nabla(x,y)
    :=
    \exp_x^{-1}(y)\in T_xM.
    \label{eq:log-increment}
\end{equation}
The map $e_\nabla$ satisfies  the admissibility condition \eqref{eq:admissible-increment}.
Since $X$ is (uniformly) continuous, for every
neighborhood $\mathcal U$ of the diagonal, there exists $n_0$ such
that
$(X_{t_i^n},X_{t_{i+1}^n})\in\mathcal U$
for all $n\geq n_0$ and all $i$. Thus any admissible increment map is
defined on all sufficiently fine increments of the path. Set
\begin{equation}
    \Delta_i^{n,e}X
    :=
    e(X_{t_i^n},X_{t_{i+1}^n})
    \in T_{X_{t_i^n}}M.
    \label{eq:e-increment}
\end{equation}
Consider the pullback bundle
\[
    E_X^{(p)}
    :=
    X^*\Sym^p(TM)
    \longrightarrow [0,T]
\]
whose fiber at $t$ is
$
    (E_X^{(p)})_t
    =
    \Sym^p(T_{X_t}M).$
The dual bundle is
\[
    (E_X^{(p)})^*
    =
    X^*\Sym^p(T^*M).
\]

Associated with $X$, $\pi_n$ and an admissible increment map $e$, we
define the atomic tensor measure
\begin{equation}
    \mu_{X,n}^{(p),e}
    :=
    \sum_{i=0}^{N_n-1}
    \bigl(\Delta_i^{n,e}X\bigr)^{\otimes p}
    \,\delta_{t_i^n}.
    \label{eq:partition-tensor-measure}
\end{equation}
Thus $\mu_{X,n}^{(p),e}$ is a finite
$E_X^{(p)}$-valued Radon measure on $[0,T]$.
More explicitly, if
$ A\in  C\bigl([0,T];X^*\Sym^p(T^*M)\bigr)$
is a continuous symmetric covariant $p$-tensor field along $X$, then
\begin{equation}
    \int_0^T
    \big\langle A_t,d\mu_{X,n}^{(p),e}(t)\big\rangle
    =
    \sum_{i=0}^{N_n-1}
    A_{t_i^n}
    \left(
        \bigl(\Delta_i^{n,e}X\bigr)^{\otimes p}
    \right).
    \label{eq:pairing-tensor-measure}
\end{equation}

For a Borel set $B\subset[0,T]$, we may equivalently define
\[
    \mu_{X,n}^{(p),e}(B)(A)
    :=
    \sum_{\substack{0\leq i<N_n\\ t_i^n\in B}}
    A_{t_i^n}
    \left(
        \bigl(\Delta_i^{n,e}X\bigr)^{\otimes p}
    \right).
\]
In particular, if $A$ vanishes on $B$, then
$ \mu_{X,n}^{(p),e}(B)(A)=0.$
Hence $\mu_{X,n}^{(p),e}$ is a local vector measure, in the sense of
Brena--Gigli \cite{brena2024}, acting in duality with continuous symmetric covariant
$p$-tensor fields along $X$.
The total variation of \eqref{eq:partition-tensor-measure} is
\begin{equation}
    \bigl|\mu_{X,n}^{(p),e}\bigr|
    =
    \sum_{i=0}^{N_n-1}
    \bigl|\Delta_i^{n,e}X\bigr|_g^p\,
    \delta_{t_i^n},
    \label{eq:total-variation-partition-measure}
\end{equation}
where we use the tensor norm induced by $g$ and the identity $ |v^{\otimes p}|=|v|^p.$
\subsection{Paths with finite $p$-th variation along a sequence of partitions}

We first isolate the quantitative assumption which controls the
partition sums.

\begin{definition}[Finite $p$-energy along $\pi$]
Let $e$ be an admissible increment map. We say that $X$ has
\emph{finite $p$-energy along $\pi$ with respect to $e$} if there exists $n_0\in \mathbb{N}$ such that
\begin{equation}
    \sup_{n\geq n_0}
    \sum_{i=0}^{N_n-1}
    \bigl|
        \Delta_i^{n,e}X
    \bigr|_g^p
    <\infty.
    \label{eq:bounded-p-energy}
\end{equation}
\end{definition}

The next definition includes both boundedness of the $p$-energy and the
existence of a tensorial limit.

\begin{definition}[$p$-th variation tensor along $\pi$]
Let $e$ be an admissible increment map. We say that
$X\in C([0,T],M)$ has \emph{finite $p$-th order variation along $\pi$}
if \eqref{eq:bounded-p-energy} holds and there exists a finite
$X^*\Sym^p(TM)$-valued Radon measure
\[
    [X]_{\pi}^{(p)}
    \in
    \mathcal M\bigl(
        [0,T];X^*\Sym^p(TM)
    \bigr)
\]
such that
\begin{equation}
    \mu_{X,n}^{(p),e}
    \stackrel{*}{\rightharpoonup}
    [X]_{\pi}^{(p)}
    \qquad{\rm as }\quad n\to\infty.
    \label{eq:weak-tensor-convergence}
\end{equation}
That is,
\begin{equation}
  \forall  A\in
    C\bigl([0,T];X^*\Sym^p(T^*M)\bigr),
\qquad
    \lim_{n\to\infty}
    \sum_{i=0}^{N_n-1}
    A_{t_i^n}
    \left(
        \bigl(\Delta_i^{n,e}X\bigr)^{\otimes p}
    \right)
    =
    \int_0^T
    A_t\bigl(d[X]_{\pi,t}^{(p)}\bigr).
    \label{eq:def-p-variation}
\end{equation}\label{def:p-variation}
\end{definition}
We denote $V_p(M,\pi)$
the set of continuous paths
    $X:[0,T]\mapsto M$ 
which satisfy
Definition~\ref{def:p-variation}:
\[
    V_p(M,\pi)
    :=
    \left\{
        X\in C([0,T],M):
        X \text{ admits a $p$-th variation tensor }
        [X]_\pi^{(p)}
        \text{ along }\pi 
    \right\}.
\]
Here weak-* convergence is understood in duality with continuous
symmetric covariant $p$-tensor fields along $X$:
\[
    \mu_n\stackrel{*}{\rightharpoonup}\mu
\iff
  \forall A\in
    C\!\left([0,T];
    X^*\operatorname{Sym}^p(T^*M)\right),\quad   \int_{[0,T]}\langle A_t,d\mu_n(t)\rangle
    \longrightarrow
    \int_{[0,T]}\langle A_t,d\mu(t)\rangle
\]
which in the case of the discrete measures \eqref{eq:partition-tensor-measure} leads to \eqref{eq:def-p-variation}.

At this stage Definition \ref{def:p-variation} may appear to depend on the increment map $e$. We now show that it does not.
\begin{lemma}[Comparison of admissible increments]
\label{lem:comparison-increments}
Let $e$ and $\widetilde e$ be two admissible increment maps. Let
$K\subset M$ be compact. There exists a neighborhood $U$ of the diagonal in $K\times K$
and 
$C_K>0$ such that for $(x,y)\in U$
\begin{equation}
    \left|
        e(x,y)^{\otimes p}
        -
        \widetilde e(x,y)^{\otimes p}
    \right|
    \leq
    C_K\,d_g(x,y)^{p+1}
    \label{eq:tensor-comparison}
\end{equation}
and
\begin{equation}
    C_K^{-1}d_g(x,y)
    \leq
    |e(x,y)|_g
    \leq
    C_Kd_g(x,y),
    \label{eq:increment-distance-equivalence}
\end{equation}
and the same estimate holds for $\widetilde e$.
\end{lemma}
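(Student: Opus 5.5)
The plan is to work in local coordinates, cover $K$ by finitely many charts, and reduce both estimates to first-order Taylor expansions of $e$ and $\widetilde e$ in the second variable around the diagonal.

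\emph{Localization.} By compactness, cover $K$ by finitely many coordinate charts $(V_j,\varphi_j)$, and choose relatively compact $W_j\Subset V_j$ still covering $K$. Let $\delta>0$ be a Lebesgue-type number such that whenever $x\in K$ and $d_g(x,y)<\delta$, both $x$ and $y$ lie in some common $V_j$ with $x\in W_j$, and $(x,y)$ lies in the domains of $e$ and $\widetilde e$. This last requirement is possible because those domains are open neighbourhoods of the diagonal and $K$ is compact. We take $U=\{(x,y)\in K\times K: d_g(x,y)<\delta\}$, shrinking $\delta$ later if needed. In a chart, $g$ is uniformly equivalent to the Euclidean metric on $\overline{W_j}$, and $d_g(x,y)$ is comparable to $|\varphi_j(x)-\varphi_j(y)|$ for $y$ close to $x\in\overline{W_j}$. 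Both facts are standard, the second via the local comparison of geodesic distance with coordinate distance. Hence it suffices to prove the bounds with $|y-x|$ in coordinates, with $e(x,y)\in\mathbb R^d$ its coordinate representation.

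\emph{Taylor step.} Since $e$ is smooth with $e(x,x)=0$ and $D_2e(x,x)=\Id$, Taylor's formula with integral remainder in $y$ gives
\[
e(x,y)=(y-x)+R(x,y),\qquad |R(x,y)|\le C|y-x|^2,
\]
where $C$ is a bound on the second $y$-derivatives of $e$ on a compact neighbourhood of the diagonal over $\overline{W_j}$. The same expansion holds for $\widetilde e$. Shrinking $\delta$ so that $C|y-x|\le 1/2$ yields
\[
\tfrac12|y-x|\le |e(x,y)|\le \tfrac32|y-x|,
\]
which gives \eqref{eq:increment-distance-equivalence} after converting back to $d_g$ and $|\cdot|_g$. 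It also gives $|e(x,y)-\widetilde e(x,y)|\le 2C|y-x|^2$.

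\emph{Tensor estimate.} To obtain \eqref{eq:tensor-comparison}, use the telescoping identity
\[
a^{\otimes p}-b^{\otimes p}=\sum_{k=0}^{p-1} a^{\otimes k}\otimes(a-b)\otimes b^{\otimes(p-1-k)}.
\]
With $a=e(x,y)$ and $b=\widetilde e(x,y)$, the norms of $a$ and $b$ are $O(|y-x|)$ and the norm of $a-b$ is $O(|y-x|^2)$. Each of the $p$ terms is therefore $O(|y-x|^{p+1})$, so the difference is bounded by $p\,C'|y-x|^{p+1}$. Tensor norms in coordinates are uniformly equivalent to $g$-norms on the compact set. Finally, take $C_K$ to be the maximum of the constants over the finitely many charts.

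\emph{Main obstacle.} The only delicate point is uniformity: all Taylor constants must be taken on one compact neighbourhood of the diagonal in which the domains of $e$ and $\widetilde e$ and the charts are all valid. This is handled by the Lebesgue-number choice of $\delta$ above. The comparison between $d_g$ and coordinate distance is local and follows from the standard bi-Lipschitz comparison of $g$ with the Euclidean metric on compact chart pieces.
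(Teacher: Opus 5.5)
Your proposal is correct and follows essentially the same route as the paper: a finite chart cover of $K$, a second-order Taylor expansion in the second variable using $e(x,x)=0$ and $D_2e(x,x)=\mathrm{Id}$ to obtain both $|e(x,y)|_g\asymp d_g(x,y)$ and $|e(x,y)-\widetilde e(x,y)|_g\le C\,d_g(x,y)^2$, and then the telescoping identity for $a^{\otimes p}-b^{\otimes p}$. The only differences are cosmetic: the paper Taylor-expands $r=e-\widetilde e$ directly rather than each map separately, and your handling of uniformity (the Lebesgue-number choice of $\delta$ and the compact neighbourhood of the diagonal) is more careful than the paper's.
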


\begin{proof}
Set
\[
    r(x,y):=e(x,y)-\widetilde e(x,y)\in T_xM.
\]
Fix $x_0\in K$ and choose a coordinate neighborhood $U$ of $x_0$
together with a smooth trivialization
$TM|_U\simeq U\times\mathbb R^d.$
Since   $e$ and $\widetilde e$ are
admissible,
\[
    r(x,x)=0,
    \qquad
    D_2r(x,x)=0.
\]
Taylor's theorem in the second variable therefore yields
\[
    |r(x,y)|
    \leq
    C\,|\varphi(y)-\varphi(x)|^2
\]
for $x,y$ in a sufficiently small relatively compact coordinate
neighborhood.
Hence
\[
    |r(x,y)|_g
    \leq
    C\,d_g(x,y)^2.
\]
A finite covering of $K$ gives
\eqref{eq:tensor-comparison} for $p=2$ with a uniform constant $C_K$.
Similarly, the conditions
\[
    e(x,x)=0,
    \qquad
    D_2e(x,x)=\Id
\]
give 
$e(x,y)
    =
    \varphi(y)-\varphi(x)
    +
    O\bigl(|\varphi(y)-\varphi(x)|^2\bigr)$ in local coordinates.
Shrinking the neighborhood of the diagonal if necessary therefore
gives \eqref{eq:increment-distance-equivalence}. The argument for
$\widetilde e$ is identical.
Admissibility gives 
\[
    |e(x,y)|+|\widetilde e(x,y)|\leq C d_g(x,y),
    \qquad
    |e(x,y)-\widetilde e(x,y)|\leq C d_g(x,y)^2.
\]
uniformly on compact sets. Hence, using
\[
    u^{\otimes p}-v^{\otimes p}
    =
    \sum_{k=0}^{p-1}
    u^{\otimes k}\otimes(u-v)\otimes
    v^{\otimes(p-1-k)},
\]
we obtain
\[
    |e(x,y)^{\otimes p}-\widetilde e(x,y)^{\otimes p}|
    \leq C d_g(x,y)^{p+1}.
\]

\end{proof}

\begin{proposition}[Independence of the increment map]
\label{prop:independence-e}
Let $e$ and $\widetilde e$ be two admissible increment maps and let
$X\in C([0,T],M)$. Assume that
\[
    \sup_{n\geq n_0}
    \sum_{i=0}^{N_n-1}
    |\Delta_i^{n,e}X|_g^p
    <\infty.
\]
Then
\begin{equation}
   \mathop{\sup}_{n\geq n_0}
    \sum_{i=0}^{N_n-1}
    |\Delta_i^{n,\widetilde e}X|_g^p
    <\infty,\qquad \left\|
        \mu_{X,n}^{(p),e}
        -
        \mu_{X,n}^{(p),\widetilde e}
    \right\|_{\mathrm{TV}}
    \longrightarrow0.
    \label{eq:TV-independence-e}
\end{equation}
In particular, if
\[
    \mu_{X,n}^{(p),e}
    \stackrel{*}{\rightharpoonup}
    [X]_{\pi}^{(p)},
\quad{\rm then}\quad
    \mu_{X,n}^{(p),\widetilde e}
    \stackrel{*}{\rightharpoonup}
    [X]_{\pi}^{(p)}.
\]
Thus the finite $p$-energy property and the $p$-th variation tensor do not depend on the choice of admissible increment map.
\end{proposition}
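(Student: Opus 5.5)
The plan is to apply Lemma~\ref{lem:comparison-increments} on the compact set $K:=X([0,T])$ and to control every discrepancy by the quantities $d_g(X_{t_i^n},X_{t_{i+1}^n})$.

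\textbf{Step 1: reduction to the neighborhood of the lemma.} Let $U$ be the neighborhood of the diagonal in $K\times K$ given by the lemma. Since $X$ is uniformly continuous and $|\pi_n|\to0$, we have
\[
\delta_n:=\max_i d_g(X_{t_i^n},X_{t_{i+1}^n})\longrightarrow 0 .
\]
Hence, after enlarging $n_0$ if necessary, all pairs $(X_{t_i^n},X_{t_{i+1}^n})$ lie in $U$ for $n\geq n_0$. For the finitely many indices $n$ that are dropped, each sum is finite, so the supremum over $n\geq n_0$ is unaffected.

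\textbf{Step 2: transfer of the $p$-energy.} By \eqref{eq:increment-distance-equivalence} applied to $e$ and to $\widetilde e$,
\[
|\Delta_i^{n,\widetilde e}X|_g\leq C_K\,d_g(X_{t_i^n},X_{t_{i+1}^n})\leq C_K^2\,|\Delta_i^{n,e}X|_g .
\]
Raising to the power $p$ and summing over $i$ shows that the $\widetilde e$-energy is bounded by $C_K^{2p}$ times the $e$-energy. It is therefore bounded uniformly in $n$. The same bound also gives
\[
S_n:=\sum_i d_g(X_{t_i^n},X_{t_{i+1}^n})^p\leq C_K^{p}\sum_i|\Delta_i^{n,e}X|_g^p,
\]
so $\sup_n S_n<\infty$.

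\textbf{Step 3: total variation estimate.} Both measures are atomic and supported on the points $t_i^n$. Their difference therefore has total variation
\[
\sum_i\bigl|(\Delta_i^{n,e}X)^{\otimes p}-(\Delta_i^{n,\widetilde e}X)^{\otimes p}\bigr|\leq C_K\sum_i d_g(X_{t_i^n},X_{t_{i+1}^n})^{p+1}\leq C_K\,\delta_n\,S_n ,
\]
where the first inequality is \eqref{eq:tensor-comparison}. The right-hand side tends to $0$ because $\delta_n\to0$ and $\sup_n S_n<\infty$. This is the key step. The only subtlety is that the gain of one extra power of $d_g$ must be converted into a vanishing factor, and this is done by factoring out the maximum $\delta_n$ while keeping the $p$-th powers controlled by the $p$-energy.

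\textbf{Step 4: weak-$*$ limit.} Let $A$ be a continuous section of $X^*\Sym^p(T^*M)$. Compactness of $[0,T]$ gives $\|A\|_\infty<\infty$, and then
\[
\Bigl|\int\langle A,d\mu^{(p),e}_{X,n}\rangle-\int\langle A,d\mu^{(p),\widetilde e}_{X,n}\rangle\Bigr|\leq\|A\|_\infty\,\bigl\|\mu^{(p),e}_{X,n}-\mu^{(p),\widetilde e}_{X,n}\bigr\|_{\mathrm{TV}}\longrightarrow0 .
\]
Thus both sequences converge weakly-$*$ to the same limit, which is $[X]_\pi^{(p)}$. The argument is symmetric in $e$ and $\widetilde e$, so the finite $p$-energy property and the $p$-th variation tensor do not depend on the choice of admissible increment map.
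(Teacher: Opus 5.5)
Your proposal is correct and follows the paper's proof essentially step for step. You apply Lemma~\ref{lem:comparison-increments} on $K=X([0,T])$ to compare the $p$-energies and to obtain $\|\mu_{X,n}^{(p),e}-\mu_{X,n}^{(p),\widetilde e}\|_{\mathrm{TV}}\leq C_K\,\delta_n\sum_i d_g(X_{t_i^n},X_{t_{i+1}^n})^p$, you control the last sum by the $e$-energy via \eqref{eq:increment-distance-equivalence}, and you obtain the weak-$*$ statement by pairing with continuous test fields; the only harmless imprecision is the claim that the finitely many dropped indices $n$ do not affect the supremum, since for coarse partitions $\widetilde e$ need not be defined on the increments at all, so the conclusion should be read with $n_0$ enlarged (as the paper's ``for sufficiently large $n$'' implicitly does).
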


\begin{proof}
    $K:=X([0,T])$
 is compact. Define $
    \omega_X(\pi_n)
    :=
    \max_{0\leq i<N_n}
    d_g(X_{t_i^n},X_{t_{i+1}^n}).$
Uniform continuity of $X$ and $|\pi_n|\to0$ imply
$\omega_X(\pi_n)\to 0.$
By Lemma~\ref{lem:comparison-increments}, for sufficiently large
$n$,
\[
    |\Delta_i^{n,\widetilde e}X|_g
    \leq
    C_K|\Delta_i^{n,e}X|_g,
\]
uniformly in $i$. Hence
\[
    \sum_i
    |\Delta_i^{n,\widetilde e}X|_g^p
    \leq
    C_K^p
    \sum_i
    |\Delta_i^{n,e}X|_g^p,
\]
which proves boundedness of the $p$-energy computed with $\widetilde e$. Lemma~\ref{lem:comparison-increments} gives
\[
\begin{split}
    \left\|
        \mu_{X,n}^{(p),e}
        -
        \mu_{X,n}^{(p),\widetilde e}
    \right\|_{\mathrm{TV}}
    &\leq
    C_K
    \sum_i
    d_g(X_{t_i^n},X_{t_{i+1}^n})^{p+1}
    \\
    &\leq
    C_K\,
    \omega_X(\pi_n)
    \sum_i
    d_g(X_{t_i^n},X_{t_{i+1}^n})^p.
\end{split}
\]
By \eqref{eq:increment-distance-equivalence}, the last sum is bounded
uniformly in $n$ by a constant multiple of
$
    \sum_i|\Delta_i^{n,e}X|_g^p.$
Therefore
\[
    \left\|
        \mu_{X,n}^{(p),e}
        -
        \mu_{X,n}^{(p),\widetilde e}
    \right\|_{\mathrm{TV}}
    \leq
    C\,\omega_X(\pi_n)
    \longrightarrow0.
\]
This proves \eqref{eq:TV-independence-e}. Weak-* convergence of one
sequence of measures is therefore equivalent to weak-* convergence of
the other, and the two limits coincide.
\end{proof}

\begin{remark}[Intrinsic definition]
Proposition~\ref{prop:independence-e} shows that the $p$-th variation
tensor depends only on the first-order behavior of the increment map
along the diagonal. Any two smooth maps satisfying
\[
    e(x,x)=0,
    \qquad
    D_2e(x,x)=\Id_{T_xM},
\]
lead to the same limiting tensor measure.
Consequently, when a connection $\nabla$ is fixed, one may use without loss of generality the
intrinsic logarithmic increment
\[
    e_\nabla(x,y)=\exp_x^{-1}(y).
\]

\end{remark}

\begin{remark}[Embedded manifolds]
Suppose $(M,g)$ is isometrically embedded in $\mathbb R^N$ and
let
\[
    P_x\colon\mathbb R^N\longrightarrow T_xM
\]
denote Euclidean orthogonal projection. Then
$   e(x,y):=P_x(y-x)$
is an admissible increment map.
For the Levi--Civita connection of the induced metric one has 
\[
    P_x(y-x)
    =
    \exp_x^{-1}(y)
    +
    O\bigl(d_g(x,y)^3\bigr).
\]
in a neighborhood of the diagonal.
In particular, Proposition~\ref{prop:independence-e} implies that the
projected ambient increment and the intrinsic geodesic increment yield
the same $p$-th variation tensor whenever either construction exists.
\end{remark}
\begin{remark}[$p-$th variation tensor as a local vector measure]
The $p-$th variation 
\[
    [X]_{\pi}^{(p)}
    \in
    \mathcal M\bigl(
        [0,T];X^*\Sym^p(TM)
    \bigr)
\]
is a symmetric $p$-tensor-valued measure along $X$. Equivalently, it
defines a local vector measure, in the sense of Brena-Gigli \cite{brena2024}, acting on sections of
$   X^*\Sym^p(T^*M).$
For a Borel set $B\subset[0,T]$ and
$    A\in
    C\bigl([0,T];X^*\Sym^p(T^*M)\bigr),$
we set
\[
    [X]_{\pi}^{(p)}(B)(A)
    :=
    \int_B A_t\bigl(d[X]_{\pi,t}^{(p)}\bigr).
\]
Its locality is expressed by
\[
    A|_B=0
    \quad\Longrightarrow\quad
    [X]_{\pi}^{(p)}(B)(A)=0.
\]
One may further push this measure forward by $X$ to obtain a local
vector measure on $M$ supported on the trace $X([0,T])$. The
time-parametrized tensor measure along $X$, however, is the more
natural object for the pathwise calculus developed below.
\end{remark}
\subsection{Transformation under smooth maps}
\label{subsec:smooth-map-transformation}

A basic requirement for an intrinsic notion of
higher-order variation on a manifold is naturality under smooth maps.
Transformation rules for quadratic variation under nonlinear maps were studied by
F\"ollmer \cite{follmer1981} in the vector case and in
\cite{ananova2017} under smooth functionals. Analogous results for $p-$th variation in the vector case were given in \cite{CP2019}.
We now establish the corresponding tensorial transformation rule for
the $p$-th variation of manifold-valued paths.

Let $M$ and $N$ be smooth finite-dimensional manifolds, let
$ \Phi:M\longrightarrow N$
be smooth, and let $    X:[0,T]\longrightarrow M$
be continuous. Set 
    $Y=\Phi\circ X.$
For every $t\in[0,T]$, the differential of $\Phi$ induces a linear
map
\[
    (D\Phi_{X_t})^{\otimes p}:
    \Sym^p(T_{X_t}M)
    \longrightarrow
    \Sym^p(T_{Y_t}N).
\]
Thus $D\Phi$ induces a continuous bundle morphism
\[
    (D\Phi_X)^{\otimes p}:
    X^*\Sym^p(TM)
    \longrightarrow
    Y^*\Sym^p(TN)
\]
over the identity map of $[0,T]$.

\begin{definition}[Fiberwise pushforward of a tensor measure]
\label{def:fiberwise-pushforward}
Let
$\mu\in
    \mathcal M\bigl([0,T];X^*\Sym^p(TM)\bigr)$
be a finite symmetric $p$-tensor-valued measure along $X$.
The \emph{fiberwise pushforward of $\mu$ by $\Phi$} is the measure
\[
    \Phi_*^{(p)}\mu
    \in
    \mathcal M\bigl([0,T];Y^*\Sym^p(TN)\bigr)
\]
defined by
\begin{equation}
    d\bigl(\Phi_*^{(p)}\mu\bigr)_t
    :=
    (D\Phi_{X_t})^{\otimes p}\,d\mu_t .
    \label{eq:fiberwise-pushforward-formal}
\end{equation}
It is characterized by
\begin{equation}
\begin{split}
 \forall  A\in
    C\bigl([0,T];Y^*\Sym^p(T^*N)\bigr),\quad   \int_0^T
    \left\langle
        A_t,
        d\bigl(\Phi_*^{(p)}\mu\bigr)_t
    \right\rangle
    &=
    \int_0^T
    \left\langle
        (D\Phi_{X_t})^{*\otimes p}A_t,
        d\mu_t
    \right\rangle.
\end{split}
\label{eq:fiberwise-pushforward-duality}
\end{equation}
Here
\[
    (D\Phi_{X_t})^{*\otimes p}:
    \Sym^p(T_{Y_t}^*N)
    \longrightarrow
    \Sym^p(T_{X_t}^*M)
\]
denotes the dual tensor map.
\end{definition}

\begin{remark}{\em 
The operation $\Phi_*^{(p)}$ is a pushforward in the tensor fibers
only: the base space $[0,T]$ is unchanged. It should therefore be
distinguished from the pushforward of a (local vector) measure on $[0,T]$ by a map
of the base space  in Brena and Gigli \cite{brena2024}.}
\end{remark}

\begin{proposition}[Transformation of the $p$-th variation tensor]
\label{prop:smooth-map-transformation}
Let $p\geq2$, 
    $X\in V_p(M,\pi)$
and  $\Phi:M\to N$ a smooth map. Then $
    \Phi\circ X\in V_p(N,\pi)$
and its $p$-th variation tensor is the fiberwise pushforward of that
of $X$:
\begin{equation}
    [\Phi\circ X]_\pi^{(p)}
    =
    \Phi_*^{(p)}[X]_\pi^{(p)}.
\label{eq:smooth-map-transformation}
\end{equation}
Equivalently,
\begin{equation}
\boxed{
    d[\Phi\circ X]_{\pi,t}^{(p)}
    =
    (D\Phi_{X_t})^{\otimes p}
    d[X]_{\pi,t}^{(p)} .
}
\label{eq:smooth-map-transformation-differential}
\end{equation}
\end{proposition}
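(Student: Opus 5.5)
The plan is to exploit the freedom in the choice of increment map given by Proposition~\ref{prop:independence-e}. On $N$ we fix any admissible increment map $e_N$, for instance $e_N(y,y')=\exp^{N}_y{}^{-1}(y')$ for some auxiliary connection on $N$. On $M$ we fix any admissible $e_M$, for instance $e_\nabla$. Write $x=X_{t_i^n}$ and $x'=X_{t_{i+1}^n}$. The key local estimate is, for $(x,x')$ in a neighborhood of the diagonal of the compact set $K=X([0,T])$,
\begin{equation*}
    e_N\bigl(\Phi(x),\Phi(x')\bigr)
    =
    D\Phi_x\, e_M(x,x') + O\bigl(d_g(x,x')^2\bigr).
\end{equation*}
To prove it, observe that both sides define smooth maps $\mathcal U\to \Phi^*TN$, with values in $T_{\Phi(x)}N$. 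Both vanish on the diagonal and have second-variable derivative equal to $D\Phi_x$ there, by the admissibility of $e_N$ and $e_M$ and the chain rule. Taylor's theorem in local trivializations, as in the proof of Lemma~\ref{lem:comparison-increments}, then gives the quadratic remainder, uniformly on $K$ after a finite covering.

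Next we pass to $p$-th tensor powers. Put $u=e_N(\Phi(x),\Phi(x'))$ and $v=D\Phi_x e_M(x,x')$. The telescoping identity $u^{\otimes p}-v^{\otimes p}=\sum_k u^{\otimes k}\otimes(u-v)\otimes v^{\otimes (p-1-k)}$, together with $|u|,|v|\le C d_g(x,x')$, gives
\[
    \bigl|\,u^{\otimes p}-(D\Phi_x)^{\otimes p} e_M(x,x')^{\otimes p}\bigr|\le C\,d_g(x,x')^{p+1}.
\]
Summing over $i$ and arguing exactly as in the proof of Proposition~\ref{prop:independence-e}, we obtain
\[
    \bigl\|\mu^{(p),e_N}_{Y,n}-\Phi_*^{(p)}\mu^{(p),e_M}_{X,n}\bigr\|_{\mathrm{TV}}\le C\,\omega_X(\pi_n)\sum_i |\Delta^{n,e_M}_iX|^p\longrightarrow 0,
\]
using the finite $p$-energy of $X$ and \eqref{eq:increment-distance-equivalence}. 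The bound on the $p$-energy of $Y$ follows in the same way, since $|u|\le C|e_M(x,x')|$ and $D\Phi$ is bounded on $K$.

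It remains to identify the limit. Let $A\in C([0,T];Y^*\Sym^p(T^*N))$. Then $t\mapsto (D\Phi_{X_t})^{*\otimes p}A_t$ is a continuous section of $X^*\Sym^p(T^*M)$, because $X$ is continuous and $\Phi$ is smooth. By \eqref{eq:fiberwise-pushforward-duality} applied to the atomic measures,
\[
    \int\langle A,d\Phi_*^{(p)}\mu^{(p),e_M}_{X,n}\rangle=\int\langle (D\Phi_X)^{*\otimes p}A,d\mu^{(p),e_M}_{X,n}\rangle,
\]
and by \eqref{eq:def-p-variation} this converges to $\int\langle A,d\Phi^{(p)}_*[X]^{(p)}_\pi\rangle$. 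Combined with the vanishing total-variation difference, this shows that $\mu^{(p),e_N}_{Y,n}$ converges weak-* to $\Phi_*^{(p)}[X]^{(p)}_\pi$. Hence $Y\in V_p(N,\pi)$ with $[Y]^{(p)}_\pi=\Phi_*^{(p)}[X]^{(p)}_\pi$, and by Proposition~\ref{prop:independence-e} the result does not depend on the choice of $e_N$.

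The main obstacle is the first estimate. Its two sides lie in tangent spaces of $N$ while its arguments lie in $M\times M$, so the Taylor argument has to be carried out in trivializations of $\Phi^*TN$ near the diagonal. A second point needs care: the metric $d_g$ on $M$ controls $d_{g_N}(\Phi(x),\Phi(x'))$, so that $(\Phi(x),\Phi(x'))$ eventually lies in the domain of $e_N$. This follows because $\Phi$ is locally Lipschitz on the compact set $K$.
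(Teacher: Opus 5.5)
Your proposal is correct and follows essentially the same route as the paper's proof. In both, the remainder $e_N(\Phi(x),\Phi(y))-D\Phi_x e_M(x,y)$ vanishes to second order on the diagonal, the telescoping identity upgrades this to an $O(d^{p+1})$ tensor estimate, the total-variation difference is bounded by $C\,\omega_X(\pi_n)$ times the $p$-energy, and the limit is identified by duality with $(D\Phi_{X_t})^{*\otimes p}A_t$. Your remarks on working in trivializations of $\Phi^*TN$, and on $(\Phi(x),\Phi(x'))$ eventually lying in the domain of $e_N$ because $\Phi$ is locally Lipschitz on $K$, make explicit two points the paper passes over silently.
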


\begin{proof}
Let $e_M$ and $e_N$ be admissible increment maps on $M$ and $N$.    $K:=X([0,T])$ is compact.
For $x,y\in K$ sufficiently close, define
\[
    R_\Phi(x,y)
    :=
    e_N(\Phi(x),\Phi(y))
    -
    D\Phi_x\,e_M(x,y)
    \in T_{\Phi(x)}N.
\]
By admissibility, $R_\Phi(x,x)=0,$
and
\[
\begin{split}
    D_2R_\Phi(x,x)
    &=
    D_2e_N(\Phi(x),\Phi(x))\circ D\Phi_x
    -
    D\Phi_x\circ D_2e_M(x,x)
    =D\Phi_x-D\Phi_x
    =0 .
\end{split}
\]
Taylor expansion in the second variable, uniformly for $x\in K$,
therefore gives
\begin{equation}
    |R_\Phi(x,y)|
    \leq C_K d_M(x,y)^2 .
    \label{eq:Phi-increment-error}
\end{equation}
Hence
\begin{equation}
    e_N(\Phi(x),\Phi(y))
    =
    D\Phi_x\,e_M(x,y)
    +
    O\bigl(d_M(x,y)^2\bigr),
    \label{eq:Phi-increment-first-order}
\end{equation}
uniformly on $K$.
Since $D\Phi$ is bounded on $K$, and admissible increments are
comparable with Riemannian distance on compact sets,
\[
    |e_N(\Phi(x),\Phi(y))|
    +
    |D\Phi_xe_M(x,y)|
    \leq C_K d_M(x,y).
\]
Using
\[
    u^{\otimes p}-v^{\otimes p}
    =
    \sum_{k=0}^{p-1}
    u^{\otimes k}\otimes
    (u-v)\otimes
    v^{\otimes(p-1-k)},
\]
we obtain from \eqref{eq:Phi-increment-error}
\begin{equation}
\left|
e_N(\Phi(x),\Phi(y))^{\otimes p}
-
(D\Phi_x)^{\otimes p}
e_M(x,y)^{\otimes p}
\right|
\leq
C_Kd_M(x,y)^{p+1}.
\label{eq:Phi-tensor-increment-error}
\end{equation}
Write
\[
    \Delta_i^{n,M}X
    :=
    e_M(X_{t_i^n},X_{t_{i+1}^n}),
\qquad
    \Delta_i^{n,N}Y
    :=
    e_N(Y_{t_i^n},Y_{t_{i+1}^n}).
\]
Equation \eqref{eq:Phi-increment-first-order} and boundedness of
$D\Phi$ on $K$ imply
\[
    |\Delta_i^{n,N}Y|
    \leq C_K|\Delta_i^{n,M}X|
\]
for all sufficiently large $n$. Since $X$ has finite $p$-energy
along $\pi$, so does $Y$.

Now consider
\[
    \mu_{Y,n}^{(p),e_N}
    =
    \sum_i
    (\Delta_i^{n,N}Y)^{\otimes p}\delta_{t_i^n}
\]
and
\[
    \Phi_*^{(p)}\mu_{X,n}^{(p),e_M}
    =
    \sum_i
    (D\Phi_{X_{t_i^n}})^{\otimes p}
    (\Delta_i^{n,M}X)^{\otimes p}
    \delta_{t_i^n}.
\]
Set
$   \omega_X(\pi_n)
    =
    \max_i
    d_M(X_{t_i^n},X_{t_{i+1}^n}).$
Then $\omega_X(\pi_n)\to0$, and
\eqref{eq:Phi-tensor-increment-error} yields
\begin{align*}
&
\left\|
    \mu_{Y,n}^{(p),e_N}
    -
    \Phi_*^{(p)}\mu_{X,n}^{(p),e_M}
\right\|_{\mathrm{TV}}
\leq
C_K
\sum_i
d_M(X_{t_i^n},X_{t_{i+1}^n})^{p+1}
\\
&\qquad\leq
C_K\omega_X(\pi_n)
\sum_i
d_M(X_{t_i^n},X_{t_{i+1}^n})^p
\longrightarrow0.
\end{align*}
The last sum is uniformly bounded by finite $p$-energy and
Lemma~\ref{lem:comparison-increments}.
Since
$
    \mu_{X,n}^{(p),e_M}
    \stackrel{*}{\rightharpoonup}
    [X]_\pi^{(p)},$
Definition~\ref{def:fiberwise-pushforward} implies
\[
    \Phi_*^{(p)}\mu_{X,n}^{(p),e_M}
    \stackrel{*}{\rightharpoonup}
    \Phi_*^{(p)}[X]_\pi^{(p)}.
\]
Together with the preceding total-variation estimate this gives
$    \mu_{Y,n}^{(p),e_N}
    \stackrel{*}{\rightharpoonup}
    \Phi_*^{(p)}[X]_\pi^{(p)}.$
Hence $Y=\Phi\circ X$ belongs to $V_p(N,\pi)$ and
\eqref{eq:smooth-map-transformation} follows.
\end{proof}

\begin{remark}
Although second derivatives of $\Phi$ enter the estimate
\eqref{eq:Phi-increment-error}, only the first derivative
$D\Phi$ appears in the limiting transformation rule
\eqref{eq:smooth-map-transformation-differential}. 
Pohl \cite{Pohl1962} associates with every smooth
map $\Phi:M\to N$ a $p$-th differential
\[
    \Phi_p:\tau^{(p)}M\to\tau^{(p)}N
\]
compatible with the natural filtration. On the highest-order quotient
$\tau^{(p)}M/\tau^{(p-1)}M\simeq\Sym^p(TM),$
the induced map is $(D\Phi)^{\otimes p}$. Proposition~\ref{prop:smooth-map-transformation}
shows that the $p$-th variation tensor transforms precisely according
to this highest-order component.
\end{remark}
\section{Covariant Taylor expansion along geodesics}
\label{sec:covariant-taylor}
 The symmetrized covariant derivative used below is developed in Palais \cite{palais}. Various forms of covariant Taylor expansions along geodesics are studied in \cite{avramidi2000,palais,rempala1988,widom1980}. We provide here a self-contained derivation under the exact assumptions used in the sequel.

Recall that $M$ is  endowed with a smooth
affine connection $\nabla$. The connection induces, in the usual way,
connections on the cotangent bundle and on all tensor bundles over
$M$. 

\begin{remark}[No assumption on torsion]\label{rem.notorsion}
No torsion-free assumption on $\nabla$ is required anywhere in the
construction. The geodesic equation, and hence the exponential map,
depends only on the symmetric part of the connection. Moreover, for
every $k\geq1$,
\[
    (\nabla^{(k)}f)_x(v^{\otimes k})
    =
    \left.
    \frac{d^k}{ds^k}
    f(\exp_x(sv))
    \right|_{s=0},
\]
so by polarization the symmetrized covariant derivatives
$\nabla^{(k)}f$ also depend only on the symmetric part of $\nabla$.
Finally, the jet map
\[
    j_x^p f
    \longmapsto
    \bigl(
        f(x),\nabla f(x),\nabla^{(2)}f(x),\ldots,
        \nabla^{(p)}f(x)
    \bigr)
\]
is an isomorphism for an arbitrary affine connection: in local
coordinates it is triangular with respect to the ordinary partial
derivatives, with identity on each highest-order component.
Thus torsion plays no role in the present calculus: replacing an
affine connection by its symmetrization leaves both the geodesic
increments and the symmetrized covariant derivatives unchanged.
This should be contrasted with rough-path constructions involving
parallel transport or Cartan development, where the full connection
is used and torsion may affect the resulting parallel transport \cite{armstrong2022}.
\end{remark}
 \paragraph{Covariant Taylor formula}
The following identity is the basis of the covariant Taylor formula.
Let $k\geq1$, let $f\in C^k(M)$ and let
$\gamma\colon[0,1]\longrightarrow M$ 
be a geodesic. Iteration of the covariant Leibniz
rule yields 
\begin{equation}
    \frac{d^k}{ds^k}f(\gamma(s))
    =
    (\nabla^k f)_{\gamma(s)}
    \bigl(\dot\gamma(s)^{\otimes k}\bigr)=
    (\nabla^{(k)}f)_{\gamma(s)}
    \bigl(\dot\gamma(s)^{\otimes k}\bigr).
    \label{eq:derivatives-along-geodesic}
\end{equation}
In particular if $
    \gamma_{x,v}(s)=\exp_x(sv)$
then, for $1\leq k\leq p$,
\begin{equation}
    \left.
    \frac{d^k}{ds^k}
    f(\exp_x(sv))
    \right|_{s=0}
    =
    (\nabla^k f)_x(v^{\otimes k})
    =
    (\nabla^{(k)}f)_x(v^{\otimes k}).
    \label{eq:derivatives-exp-zero}
\end{equation} 
We derive here an intrinsic Taylor formula 
whose remainder
is $o(|v|^p)$, uniformly when the base point varies over a compact set.

\begin{theorem}[Covariant Taylor formula with uniform remainder bound]
\label{thm:covariant-taylor-peano}
Let $p\geq1$ and let $f\in C^p(M)$. Let $K\subset M$ be compact.
Then there exist $r_K>0$ and a nondecreasing function
\[
\omega_{f,K}\colon[0,r_K]\longrightarrow[0,\infty )\qquad
{\rm such\  that}\quad
    \lim_{r\downarrow0}\omega_{f,K}(r)=0
\]
and, for every $x\in K$ and every $v\in T_xM$ with
$|v|\leq r_K$,
\begin{equation}
    f(\exp_xv)
    =
    \sum_{k=0}^p
    \frac1{k!}
    (\nabla^{(k)}f)_x(v^{\otimes k})
    +
    R_{p,f}(x,v),
    \label{eq:covariant-taylor-peano}
\end{equation}
where
\begin{equation}
    |R_{p,f}(x,v)|
    \leq
    \omega_{f,K}(|v|)\,|v|^p
    \label{eq:uniform-peano-bound}
\end{equation}
Here we use the convention $
    \nabla^{(0)}f=f.$
\end{theorem}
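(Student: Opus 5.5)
The plan is to reduce the statement to the one-variable Taylor formula with integral (or mean-value) remainder, applied to $\phi_{x,v}(s):=f(\exp_x(sv))$ on $[0,1]$, and then use compactness to make the remainder uniform.

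\textbf{Choice of $r_K$.} First fix $r_K>0$ such that the closed set
\[
\mathcal K:=\{(x,w)\in TM:\ x\in K,\ |w|\le r_K\}
\]
is contained in $\mathcal D$. Such an $r_K$ exists because $\mathcal D$ is an open neighborhood of the zero section and $\mathcal K$ is compact for $r_K$ small. Then $\exp_x(sv)$ is defined for $s\in[0,1]$ whenever $x\in K$ and $|v|\le r_K$. The image $L:=\exp(\mathcal K)$ is compact, and the map $(x,w)\mapsto \exp_x w$ is smooth.

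\textbf{One-variable Taylor formula.} Since $f\in C^p$ and $s\mapsto\exp_x(sv)$ is smooth, $\phi_{x,v}\in C^p([0,1])$. Taylor's formula with integral remainder gives
\[
\phi_{x,v}(1)=\sum_{k=0}^{p-1}\frac{\phi^{(k)}_{x,v}(0)}{k!}+\int_0^1\frac{(1-s)^{p-1}}{(p-1)!}\phi^{(p)}_{x,v}(s)\,ds .
\]
Adding and subtracting $\phi^{(p)}_{x,v}(0)/p!$ shows that
\[
R_{p,f}(x,v)=\int_0^1\frac{(1-s)^{p-1}}{(p-1)!}\bigl(\phi^{(p)}_{x,v}(s)-\phi^{(p)}_{x,v}(0)\bigr)ds .
\]
By \eqref{eq:derivatives-along-geodesic} and \eqref{eq:derivatives-exp-zero}, we have $\phi^{(k)}_{x,v}(0)=(\nabla^{(k)}f)_x(v^{\otimes k})$. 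Moreover $\phi^{(p)}_{x,v}(s)=(\nabla^{(p)}f)_{\gamma(s)}(\dot\gamma(s)^{\otimes p})$ with $\gamma=\gamma_{x,v}$. This already yields \eqref{eq:covariant-taylor-peano}.

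There is a regularity point here: $\nabla^{(p)}f$ is only continuous when $f\in C^p$. The identity \eqref{eq:derivatives-along-geodesic} should still hold, by iterating the Leibniz rule using $\nabla_{\dot\gamma}\dot\gamma=0$. If needed, one can justify it first for smooth $f$ and pass to the limit by $C^p$ approximation on a neighborhood of $L$.

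\textbf{Uniform bound via homogeneity.} This is the main step. Write $\dot\gamma_{x,v}(s)=P_{x}(s v)\,v$, where
\[
P_x(w):=D(\exp_x)_w\colon T_xM\to T_{\exp_x w}M .
\]
This uses that $\dot\gamma_{x,v}(s)=\frac{d}{ds}\exp_x(sv)=D(\exp_x)_{sv}v$. Hence
\[
\phi^{(p)}_{x,v}(s)=\bigl(P_x(sv)^{*\otimes p}(\nabla^{(p)}f)_{\exp_x(sv)}\bigr)(v^{\otimes p})=:B(x,sv)(v^{\otimes p}),
\]
where $B(x,w)\in\Sym^p(T^*_xM)$ depends continuously on $(x,w)\in\mathcal K$. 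At $w=0$, $P_x(0)=\Id$, so $B(x,0)=(\nabla^{(p)}f)_x$.

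Define
\[
\omega_{f,K}(r):=\sup\{|B(x,w)-B(x,0)|:\ x\in K,\ |w|\le r\}.
\]
This function is nondecreasing in $r$. It tends to $0$ as $r\downarrow0$ by uniform continuity of $B$ on the compact set $\mathcal K$; to compare fibers, use the continuous norm of $g$ on the bundle $\Sym^p(T^*M)$ restricted to $\mathcal K$. Then
\[
|\phi^{(p)}_{x,v}(s)-\phi^{(p)}_{x,v}(0)|\le \omega_{f,K}(s|v|)\,|v|^p\le\omega_{f,K}(|v|)\,|v|^p .
\]
Since $\int_0^1(1-s)^{p-1}/(p-1)!\,ds=1/p!$, this gives $|R_{p,f}(x,v)|\le \frac1{p!}\omega_{f,K}(|v|)|v|^p$, which implies \eqref{eq:uniform-peano-bound}.

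The only delicate point is to check that $B$ is genuinely continuous as a section over $\mathcal K$ (viewed in local trivializations), and to handle the $C^p$-only justification of \eqref{eq:derivatives-along-geodesic} noted above.
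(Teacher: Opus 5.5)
Your proof is correct and follows the paper's argument. The paper also applies the one-variable Taylor formula with integral remainder to $s\mapsto f(\exp_x(sv))$, identifies the derivatives through the geodesic Leibniz rule, and bounds the $p$-th derivative term by compactness. The only cosmetic difference is how the velocity is pulled back to $T_xM$: you use $\dot\gamma_{x,v}(s)=D(\exp_x)_{sv}v$ and define $\omega_{f,K}$ as an explicit supremum, which makes monotonicity automatic, whereas the paper uses parallel transport $\dot\gamma_{x,v}(s)=P_{x,v}(s)v$ together with the polar decomposition $v=ru$ over the unit sphere bundle.
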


\begin{proof}
Since $K$ is compact and the domain $\mathcal D$ of the exponential
map is an open neighborhood of the zero section, there exists
$r_K>0$ such that
$    (x,v)\in\mathcal D $
whenever $x\in K$ and $|v|\leq r_K$, after decreasing $r_K$ if
necessary.
Define
\[
    h(s):=f(\gamma_{x,v}(s))
          =f(\exp_x(sv)).
\]
Since $f\in C^p(M)$,  $h\in C^p([0,1])$.
The one-dimensional Taylor formula  gives
\[
    h(1)
    =
    \sum_{k=0}^{p-1}
    \frac{h^{(k)}(0)}{k!}
    +
    \frac1{(p-1)!}
    \int_0^1
    (1-s)^{p-1}h^{(p)}(s)\,ds.
\]
By ~\eqref{eq:derivatives-along-geodesic},
\[
    h^{(k)}(0)
    =
    (\nabla^{(k)}f)_x(v^{\otimes k})\qquad{\rm and}\qquad
    h^{(p)}(s)
    =
    (\nabla^{(p)}f)_{\gamma_{x,v}(s)}
    \bigl(\dot\gamma_{x,v}(s)^{\otimes p}\bigr).
\]
Substitution yields:
\[
\begin{split}
    f(\exp_xv)
    &=
    \sum_{k=0}^{p-1}
    \frac1{k!}
    (\nabla^{(k)}f)_x(v^{\otimes k})
    +
    \frac1{(p-1)!}
    \int_0^1
    (1-s)^{p-1}
    (\nabla^{(p)}f)_{\gamma_{x,v}(s)}
    \bigl(\dot\gamma_{x,v}(s)^{\otimes p}\bigr)
    \,ds.
\end{split}
\]
Since
\[
    \int_0^1(1-s)^{p-1}\,ds=\frac1p,
\]
we may add and subtract
\[
    \frac1{p!}
    (\nabla^{(p)}f)_x(v^{\otimes p})
\]
to obtain
\[
    f(\exp_xv)
    =
    \sum_{k=0}^p
    \frac1{k!}
    (\nabla^{(k)}f)_x(v^{\otimes k})
    +
    R_{p,f}(x,v),
\]
where
\begin{equation}
\begin{split}
    R_{p,f}(x,v)
    &:=
    \frac1{(p-1)!}
    \int_0^1(1-s)^{p-1}
    \Big[
       (\nabla^{(p)}f)_{\gamma_{x,v}(s)}
       \bigl(\dot\gamma_{x,v}(s)^{\otimes p}\bigr)
       -
       (\nabla^{(p)}f)_x(v^{\otimes p})
    \Big]\,ds .
    \label{eq:peano-remainder-explicit}
\end{split}
\end{equation}
Let
$    P_{x,v}(s)\colon T_xM
       \longrightarrow T_{\gamma_{x,v}(s)}M $
denote parallel transport along $\gamma_{x,v}$. Since $\gamma_{x,v}$
is a geodesic,
\begin{equation}
    \dot\gamma_{x,v}(s)=P_{x,v}(s)v.
    \label{eq:velocity-parallel-transport}
\end{equation}
Consequently, the expression in square brackets in
\eqref{eq:peano-remainder-explicit} is
\[
\begin{split}
    &
    (\nabla^{(p)}f)_{\gamma_{x,v}(s)}
    \left(
        (P_{x,v}(s)v)^{\otimes p}
    \right)
    -
    (\nabla^{(p)}f)_x(v^{\otimes p}).
\end{split}
\]

For $v\neq0$, write $    v=r u,
    r=|v|, 
    |u|=1.$
By multilinearity, the preceding difference is equal to $r^p$ times
\[
\begin{split}
    &
    (\nabla^{(p)}f)_{\gamma_{x,ru}(s)}
    \left(
       (P_{x,ru}(s)u)^{\otimes p}
    \right)
    -
    (\nabla^{(p)}f)_x(u^{\otimes p}).
\end{split}
\]
The latter depends continuously on
$   (x,r,u,s)$
and vanishes when $r=0$. By compactness of $K$, of the unit sphere
bundle above $K$, and of $[0,1]$, this convergence to zero as
$r\downarrow0$ is uniform.
Hence there exists a function $\omega_{f,K}$ with
$\omega_{f,K}(r)\to 0$ as$r\downarrow0$
such that
\[
\begin{split}
    &\left|
    (\nabla^{(p)}f)_{\gamma_{x,v}(s)}
    \bigl(\dot\gamma_{x,v}(s)^{\otimes p}\bigr)
    -
    (\nabla^{(p)}f)_x(v^{\otimes p})
    \right|
    \leq
    p!\,\omega_{f,K}(|v|)|v|^p
\end{split}
\]
uniformly in $x\in K$ and $s\in[0,1]$. Integrating in
\eqref{eq:peano-remainder-explicit} and absorbing the fixed numerical
constant into $\omega_{f,K}$ yields
\[
    |R_{p,f}(x,v)|
    \leq
    \omega_{f,K}(|v|)|v|^p.
\]
This proves the result.
\end{proof}

Let $\mathcal U\subset M\times M$ be a neighborhood of the diagonal
on which the logarithmic increment
\[
    e_\nabla(x,y):=\exp_x^{-1}(y)
\]
is well defined. Thus $
    y=\exp_x(e_\nabla(x,y)).$
Theorem \ref{thm:covariant-taylor-peano} implies the following:
\begin{corollary}[Intrinsic Taylor expansion]
\label{cor:covariant-taylor-two-points}
Let $p\geq1$, $f\in C^p(M)$, and let $K\subset M$ be compact. For
$(x,y)\in {\mathcal U}\cap K^2$, we have
\begin{equation}
\begin{split}
    f(y)-f(x)
    &=
    \sum_{k=1}^p
    \frac1{k!}
    (\nabla^{(k)}f)_x
    \bigl(e_\nabla(x,y)^{\otimes k}\bigr)
    +
    R_{p,f}(x,y),
    \label{eq:covariant-taylor-two-points}
\end{split}
\end{equation}
where
\begin{equation}
    |R_{p,f}(x,y)|
    \leq
    \omega_{f,K}(|e_\nabla(x,y)|)
    |e_\nabla(x,y)|^p,\qquad \omega_{f,K}(r)\longrightarrow0
    \qquad\text{as }r\downarrow0.
    \label{eq:covariant-taylor-two-points-remainder}
\end{equation}
\end{corollary}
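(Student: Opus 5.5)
The corollary follows by substituting $v=e_\nabla(x,y)$ into Theorem~\ref{thm:covariant-taylor-peano}. The only point needing care is that, for $(x,y)\in\mathcal U\cap K^2$, this vector must lie in the range $|v|\le r_K$ where the uniform remainder bound holds.

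\emph{Step 1: reduce to the range of the theorem.} Apply Theorem~\ref{thm:covariant-taylor-peano} with the compact set $K$. This gives $r_K>0$ and a nondecreasing modulus $\omega_{f,K}$ with $\omega_{f,K}(r)\to0$ as $r\downarrow 0$. After shrinking $\mathcal U$ if necessary (it is only required to be a neighborhood of the diagonal), we may assume that
\[
|e_\nabla(x,y)|\le r_K \qquad \text{for all } (x,y)\in\mathcal U\cap K^2 .
\]
This is possible because $e_\nabla$ is continuous, vanishes on the diagonal, and $K$ is compact. Concretely, one can take $\mathcal U\cap K^2$ to be the image of $\{(x,v): x\in K,\ |v|<r_K\}$ under $(x,v)\mapsto(x,\exp_x v)$, with $r_K$ small enough that this map is a diffeomorphism onto its image. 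Lemma~\ref{lem:comparison-increments} also shows that on such a neighborhood $|e_\nabla(x,y)|$ is comparable to $d_g(x,y)$.

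\emph{Step 2: substitute.} Set $v:=e_\nabla(x,y)\in T_xM$, so that $\exp_x v=y$. Then \eqref{eq:covariant-taylor-peano} reads
\[
f(y)=\sum_{k=0}^p\frac1{k!}(\nabla^{(k)}f)_x\bigl(e_\nabla(x,y)^{\otimes k}\bigr)+R_{p,f}(x,v).
\]
Subtracting the $k=0$ term $f(x)$ gives \eqref{eq:covariant-taylor-two-points}, with $R_{p,f}(x,y):=R_{p,f}(x,e_\nabla(x,y))$. The bound \eqref{eq:uniform-peano-bound} then gives \eqref{eq:covariant-taylor-two-points-remainder} directly, with the same modulus $\omega_{f,K}$.

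\emph{Main obstacle.} The only potential issue is the domain compatibility handled in Step 1: the radius $r_K$ must be uniform over $K$, and $\exp_x^{-1}$ must be a genuine local inverse uniformly in $x\in K$. Both follow from the inverse function theorem applied to the smooth map $(x,v)\mapsto(x,\exp_x v)$ near the zero section, whose differential there is invertible, combined with a standard compactness argument.
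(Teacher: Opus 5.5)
Your proposal is correct and takes the same approach as the paper, which gives no separate proof and simply notes that the corollary follows from Theorem~\ref{thm:covariant-taylor-peano} by substituting $v=e_\nabla(x,y)$ and subtracting the $k=0$ term $f(x)$. Your treatment of the domain is more careful than the paper's, though shrinking $\mathcal U$ (which makes it depend on $K$) is not needed: for $(x,y)\in\mathcal U\cap K^2$ with $|e_\nabla(x,y)|\ge r_K$, the ratio $|R_{p,f}(x,y)|/|e_\nabla(x,y)|^p$ is bounded by a constant depending only on $\sup_K|f|$, the bounds of $\nabla^{(k)}f$ on $K$, and $r_K$, so $\omega_{f,K}$ can simply be extended by a constant beyond $r_K$.
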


\section{Pathwise integration and change of variable formula}
\label{sec:change-of-variable}
We now derive an intrinsic change of variable formula for continuous
manifold-valued paths with finite $p-$th variation with arbitrary $p\in \mathbb{N}$, extending  the corresponding pathwise change of variable formula in Euclidean
space  \cite{CP2019,follmer1981}.

We use the notations and assumptions of Sections \ref{sec.pthvariation} and \ref{sec:covariant-taylor}.
Consider
$    X\in V_p(M,\pi)$
(Definition~\ref{def:p-variation}) with $p-$th variation tensor denoted
    $[X]_{\pi}^{(p)}
    \in
    \mathcal M
    \bigl([0,T];X^*\Sym^p(TM)\bigr).$
Since $X$ is continuous and $|\pi_n|\to0$, the increments
\begin{equation}
    e_\nabla
    \bigl(X_{t_i^n},X_{t_{i+1}^n}\bigr)
    \in T_{X_{t_i^n}}M .
    \label{eq:logarithmic-partition-increment}
\end{equation}
are
well-defined for $n$ sufficiently large  and satisfy
\begin{equation}
    \max_{0\leq i<N_n}|e_\nabla
    \bigl(X_{t_i^n},X_{t_{i+1}^n}\bigr)|
    \mathop{\longrightarrow}^{n\to\infty} 0.
    \label{eq:max-log-increment-zero}
\end{equation}


\subsection{Rough change of variable formula}

\begin{theorem}[Change of variable formula]
\label{thm:change-of-variable}
Let $p\in\mathbb N$, $p\geq 2$, $f\in C^p(M,\mathbb R)$, and let
$X\in V_p(M,\pi)$ be a continuous path with finite $p$-th
variation tensor $[X]_{\pi}^{(p)}$ along the sequence of partitions
$\pi$.
Then the  covariant left Taylor sums
\begin{equation}
    I_{\pi_n}^{\nabla,p}(f,X):=\sum_{i=0}^{N_n-1}
    \sum_{k=1}^{p-1}
    \frac1{k!}
    (\nabla^{(k)}f)_{X_{t_i^n}}
    \left(
      e_\nabla(X_{t_i^n},X_{t_{i+1}^n})^{\otimes k}
    \right) \label{eq:compensated-covariant-sum}
\end{equation}
converge as $n\to\infty$, to a limit which we denote
\begin{equation}
    \int_0^T
    \left\langle
        T_{\nabla}^{p-1}f(X_t),
        d_{\pi}^{\,p-1}X_t
    \right\rangle := \mathop{\lim}_{n\to\infty}  \sum_{i=0}^{N_n-1}
    \sum_{k=1}^{p-1}
    \frac1{k!}
    (\nabla^{(k)}f)_{X_{t_i^n}}
    \left( e_\nabla(X_{t_i^n},X_{t_{i+1}^n})^{\otimes k}
    \right)\label{eq:pathwiseintegral}
    \end{equation}
    and the following change of variable formula holds:
\begin{equation}
\boxed{
\begin{split}
    f(X_T)-f(X_0)
    &=
    \int_0^T
    \left\langle
        T_{\nabla}^{p-1}f(X_t),
        d_{\pi}^{\,p-1}X_t
    \right\rangle
    +
    \frac1{p!}
    \int_0^T
    \left\langle
        \nabla^{(p)}f(X_t),
        d[X]_{\pi,t}^{(p)}
    \right\rangle .
\end{split}}
    \label{eq:manifold-change-of-variable}
\end{equation}
\end{theorem}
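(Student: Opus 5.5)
The plan is to telescope $f(X_T)-f(X_0)$ along $\pi_n$ and apply the intrinsic Taylor expansion of Corollary~\ref{cor:covariant-taylor-two-points} to each increment. Set $K:=X([0,T])$ and $\Delta_i^n:=e_\nabla(X_{t_i^n},X_{t_{i+1}^n})$, which is well defined for $n$ large by \eqref{eq:max-log-increment-zero}. For such $n$ we write
\[
f(X_T)-f(X_0)=\sum_{i=0}^{N_n-1}\bigl(f(X_{t_{i+1}^n})-f(X_{t_i^n})\bigr),
\]
and the corollary (with exponent $p$) gives
\[
f(X_T)-f(X_0)=I_{\pi_n}^{\nabla,p}(f,X)+\frac1{p!}\sum_{i}(\nabla^{(p)}f)_{X_{t_i^n}}\bigl((\Delta_i^n)^{\otimes p}\bigr)+\sum_i R_{p,f}(X_{t_i^n},X_{t_{i+1}^n}).
\]
The left-hand side does not depend on $n$. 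It therefore suffices to show that the second term converges to $\frac1{p!}\int_0^T\langle\nabla^{(p)}f(X_t),d[X]^{(p)}_{\pi,t}\rangle$ and that the remainder sum tends to $0$. Convergence of $I_{\pi_n}^{\nabla,p}(f,X)$ then follows by subtraction, the limit defines \eqref{eq:pathwiseintegral}, and \eqref{eq:manifold-change-of-variable} holds.

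For the $p$-th order term, we take $A_t:=\nabla^{(p)}f(X_t)$. Since $f\in C^p$, $\nabla^{(p)}f$ is a continuous section of $\Sym^p(T^*M)$, and $X$ is continuous, so $A\in C([0,T];X^*\Sym^p(T^*M))$. The sum is exactly the pairing \eqref{eq:pairing-tensor-measure} with $\mu^{(p),e_\nabla}_{X,n}$. Definition~\ref{def:p-variation} is phrased for some admissible $e$, and Proposition~\ref{prop:independence-e} allows us to use $e=e_\nabla$. Hence \eqref{eq:def-p-variation} gives the desired limit.

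For the remainder, \eqref{eq:covariant-taylor-two-points-remainder} and monotonicity of $\omega_{f,K}$ give
\[
\Bigl|\sum_i R_{p,f}(X_{t_i^n},X_{t_{i+1}^n})\Bigr|\le \omega_{f,K}\Bigl(\max_i|\Delta_i^n|\Bigr)\sum_i|\Delta_i^n|^p .
\]
The sum on the right is bounded uniformly in $n$ by the finite $p$-energy assumption \eqref{eq:bounded-p-energy}, transferred to $e_\nabla$ by Proposition~\ref{prop:independence-e}. The factor $\omega_{f,K}(\cdot)\to0$ by \eqref{eq:max-log-increment-zero}.

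The main technical point is making the Taylor remainder uniform, which requires that all pairs $(X_{t_i^n},X_{t_{i+1}^n})$ lie in $\mathcal U\cap K^2$ with $|\Delta_i^n|\le r_K$. Both conditions hold for large $n$ by uniform continuity of $X$. The corollary's modulus is taken for this fixed compact $K$, so the estimate is uniform over all $i$ and $n$. No further obstacle arises: the lower-order terms $k<p$ need not converge individually, only their sum, and that sum is controlled by the subtraction argument.
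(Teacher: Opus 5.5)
Your proposal is correct and follows the paper's proof essentially step by step. Both arguments telescope along $\pi_n$, apply the covariant Taylor expansion with its uniform Peano remainder to each increment, obtain the $p$-th order term from the weak-$*$ convergence defining $[X]^{(p)}_\pi$, and kill the remainder with the bound $\omega_{f,K}(\max_i|\xi_i^n|)\sum_i|\xi_i^n|^p\to0$. Your explicit appeal to Proposition~\ref{prop:independence-e}, to justify using $e_\nabla$ both for the limiting tensor and for the $p$-energy bound, makes precise a point the paper leaves implicit.
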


\begin{proof}
 $K=X([0,T])$ is compact. By the covariant Taylor
formula (Theorem~\ref{thm:covariant-taylor-peano}) there exists a positive function $\omega_{f,K}:[0,\infty)\to \mathbb{R}_+$ with
\[
    \omega_{f,K}(r)\longrightarrow0
    \qquad\text{as }r\downarrow0
\]
such that, for all $x\in K$ and all sufficiently small
$v\in T_xM$,
\begin{equation}
\begin{split}
    f(\exp_xv)-f(x)
    &=
    \sum_{k=1}^{p}
    \frac1{k!}
    (\nabla^{(k)}f)_x(v^{\otimes k})
    +
    R_{p,f}(x,v),
    \label{eq:taylor-proof-change-variable}
\end{split}
\end{equation}
with
\begin{equation}
    |R_{p,f}(x,v)|
    \leq
    \omega_{f,K}(|v|)\,|v|^p.
    \label{eq:taylor-proof-remainder}
\end{equation}

Apply \eqref{eq:taylor-proof-change-variable} with
\[
    x=X_{t_i^n},
    \qquad
    v=\xi_i^n
      =e_\nabla(X_{t_i^n},X_{t_{i+1}^n}).
\]
Since
$X_{t_{i+1}^n}   = \exp_{X_{t_i^n}}(\xi_i^n),$
we obtain
\begin{equation}
\begin{split}
    f(X_{t_{i+1}^n})-f(X_{t_i^n})
    &=
    \sum_{k=1}^{p-1}
    \frac1{k!}
    (\nabla^{(k)}f)_{X_{t_i^n}}
    \bigl((\xi_i^n)^{\otimes k}\bigr)+
    \frac1{p!}
    (\nabla^{(p)}f)_{X_{t_i^n}}
    \bigl((\xi_i^n)^{\otimes p}\bigr)
    +
    R_i^n,
    \label{eq:taylor-each-increment}
\end{split}
\end{equation}
where $
    R_i^n
    :=
    R_{p,f}(X_{t_i^n},\xi_i^n).$
Summing \eqref{eq:taylor-each-increment} over $i$, the left-hand side
telescopes and gives
\begin{equation}
\begin{split}
    f(X_T)-f(X_0)
    &=
    I_{\pi_n}^{\nabla,p}(f,X)
    +
    \frac1{p!}
    \sum_{i=0}^{N_n-1}
    (\nabla^{(p)}f)_{X_{t_i^n}}
    \bigl((\xi_i^n)^{\otimes p}\bigr)
    +
    \sum_{i=0}^{N_n-1}R_i^n .
    \label{eq:telescope-change-variable}
\end{split}
\end{equation}
We now study the last two terms.
By definition of the $p$-th variation tensor,
\[
    \sum_{i=0}^{N_n-1}
    (\xi_i^n)^{\otimes p}\,\delta_{t_i^n}
    \stackrel{*}{\rightharpoonup}
    [X]_{\pi}^{(p)} .
\]
Since $
    t\longmapsto
    (\nabla^{(p)}f)_{X_t}$
is a continuous section of
$X^*\Sym^p(T^*M)$, it follows that
\begin{equation}
\begin{split}
    \sum_{i=0}^{N_n-1}
    (\nabla^{(p)}f)_{X_{t_i^n}}
    \bigl((\xi_i^n)^{\otimes p}\bigr)
    \longrightarrow
    \int_0^T
    \left\langle
        \nabla^{(p)}f(X_t),
        d[X]_{\pi,t}^{(p)}
    \right\rangle .
    \label{eq:pth-term-limit}
\end{split}
\end{equation}
To show that the  remainder vanishes, note that 
by \eqref{eq:max-log-increment-zero},$\rho_n=\max_{0\leq i<N_n}|\xi_i^n|\to 0.$
The estimate \eqref{eq:taylor-proof-remainder} gives
\[
\begin{split}
    \left|
        \sum_{i=0}^{N_n-1}R_i^n
    \right|
    &\leq
    \sum_{i=0}^{N_n-1}|R_i^n|
    \leq
    \omega_{f,K}(\rho_n)
    \sum_{i=0}^{N_n-1}|\xi_i^n|^p .
    \label{eq:sum-remainder-bound}
\end{split}
\]
The path has finite $p$-energy along $\pi$, hence
\[
    \sup_n
    \sum_{i=0}^{N_n-1}|\xi_i^n|^p
    <\infty.
\]
Since $\omega_{f,K}(\rho_n)\to0$, we conclude that
\begin{equation}
    \sum_{i=0}^{N_n-1}R_i^n
    \longrightarrow 0.
    \label{eq:sum-remainder-zero}
\end{equation}
Passing to the limit in
\eqref{eq:telescope-change-variable} and using
\eqref{eq:pth-term-limit}--\eqref{eq:sum-remainder-zero} shows that
$I_{\pi_n}^{\nabla,p}(f,X)$ converges and 
\eqref{eq:manifold-change-of-variable} holds.
\end{proof}

\begin{remark}[The case $p=2$]
\label{rem:p2-follmer}
For $p=2$, $T_\nabla^1f=df,$
and the first term becomes
\begin{equation}
    \int_0^T df(X_t)\,d_\pi^\nabla X_t
    :=
    \lim_{n\to\infty}
    \sum_{i=0}^{N_n-1}
    df_{X_{t_i^n}}
    \left(
        e_\nabla(X_{t_i^n},X_{t_{i+1}^n})
    \right).
    \label{eq:manifold-follmer-integral}
\end{equation}
The change of variable formula reads
\begin{equation}
\boxed{
    f(X_T)-f(X_0)
    =
    \int_0^T df(X_t)\,d_\pi^\nabla X_t
    +
    \frac12
    \int_0^T
    \left\langle
        \nabla^{(2)}f(X_t),
        d[X]_{\pi,t}^{(2)}
    \right\rangle .
}
    \label{eq:manifold-follmer-ito}
\end{equation}
In Euclidean space this is precisely F\"ollmer's pathwise It\^o
formula \cite{follmer1981}.
\end{remark}
\begin{remark}[Exact differential forms]
\label{rem:exact-form-integration}
Theorem~\ref{thm:change-of-variable} defines, for every exact
one-form $df$, a pathwise integral defined as the limit of left Riemann-Taylor sums \eqref{eq:compensated-covariant-sum}  along the partition sequence $\pi$. The integral
\[
    \int_0^T
    \left\langle
        T_{\nabla}^{p-1}f(X_t),
        d_{\pi}^{\,p-1}X_t
    \right\rangle
\]
is a manifold extension of the construction in \cite{CP2019} and
should  be viewed as an intrinsic higher-order analogue of
the F\"ollmer integral of the exact form $df$ along $X$. Note that, as in \cite{CP2019} the integral, defined as the limit of {\it left} Riemann-Taylor sums, is an It\^o-type integral.
\end{remark}

\begin{remark}[Dependence on the connection]
\label{rem:connection-dependence}
The $p$-th variation tensor $[X]_{\pi}^{(p)}$ is intrinsic in the
sense established in the previous section: its definition is
independent of the choice of admissible increment map.
The decomposition
\eqref{eq:manifold-change-of-variable}, however, uses the connection.
Both $\nabla^{(p)}f$
and the pathwise integral
\[
    \int
    \left\langle
        T_{\nabla}^{p-1}f,
        d_{\pi}^{\,p-1}X
    \right\rangle
\]
depend on $\nabla$. Their combination is independent of this choice, since it is equal to the intrinsic quantity
$f(X_T)-f(X_0)$. We will further comment on transformation under change of connection in Section \ref{subsec:connection-transformation}.
\end{remark}

\begin{remark}{\em 
For $X\in V_p(M,\pi)$, the sums
\[
    \sum_i
    (\nabla^{(k)}f)_{X_{t_i^n}}
    \bigl((\xi_i^n)^{\otimes k}\bigr),
    \qquad k<p,
\]
need not converge individually in general. The change of variable
formula shows that the particular linear combination
\[
    \sum_{k=1}^{p-1}\frac1{k!}
    (\nabla^{(k)}f)(\xi^{\otimes k})
\]
 does converge after
summation over the partition. This compensation mechanism also appears in \cite{CP2019} and in rough integration theory \cite{FrizHairer} but in contrast to rough integration the term of order $p$ is excluded from the sum. Note also that in general this limit depends on the partition sequence $\pi$. Invariance with respect to $\pi$ under roughness conditions for $X$ has been studied in \cite{das2023} for the vector case for $p=2$.}
\end{remark}
\subsection{Integration of closed one-forms}
\label{subsec:closed-one-forms}

The preceding construction for exact one-forms extends naturally to
closed one-forms. The resulting change-of-variable formula contains a
topological endpoint term, which may depend on the homotopy class of
the path.

Let $\alpha\in C^{p-1}(T^*M)$ be a closed one-form. For
$1\leq k\leq p$, define
\begin{equation}
 \mathcal D_{\nabla}^{(k)}\alpha
 :=
 \operatorname{Sym}\bigl(\nabla^{k-1}\alpha\bigr)
 \in
 \Gamma\bigl(\operatorname{Sym}^k(T^*M)\bigr),
 \label{eq:closed-form-covariant-derivatives}
\end{equation}
where $\nabla^0\alpha:=\alpha$. Thus
$\mathcal D_{\nabla}^{(1)}\alpha=\alpha$. If $\alpha=df$ locally,
then
\begin{equation}
 \mathcal D_{\nabla}^{(k)}\alpha
 =
 \nabla^{(k)}f,
 \qquad 1\leq k\leq p.
 \label{eq:local-potential-derivatives}
\end{equation}

We denote the corresponding truncated covariant jet by
\[
 T_\nabla^{p-1}\alpha
 :=
 \left(
 \alpha,
 \frac1{2!}\mathcal D_\nabla^{(2)}\alpha,
 \ldots,
 \frac1{(p-1)!}\mathcal D_\nabla^{(p-1)}\alpha
 \right).
\]
For
\[
 \xi_i^n
 :=
 \exp_{X_{t_i^n}}^{-1}(X_{t_{i+1}^n}),
\]
consider the covariant left Taylor sums
\begin{equation}
 I_{\pi,n}^{\nabla,p}(\alpha;X)
 :=
 \sum_{i=0}^{N_n-1}
 \sum_{k=1}^{p-1}
 \frac1{k!}
 \mathcal D_\nabla^{(k)}\alpha(X_{t_i^n})
 \bigl((\xi_i^n)^{\otimes k}\bigr).
 \label{eq:closed-form-discrete-sum}
\end{equation}

To describe their limit, assume without loss of generality that $M$
is connected and let
\[
 q:\widetilde M\longrightarrow M
\]
be its universal covering. Since $q^*\alpha$ is closed and
$\widetilde M$ is simply connected, there exists
$F\in C^p(\widetilde M,\mathbb R)$ such that
\[
 dF=q^*\alpha.
\]
For a continuous path $X:[0,T]\to M$, choose a lift
$\widetilde X:[0,T]\to\widetilde M$ and define
\begin{equation}
 \mathcal A_\alpha(X)
 :=
 F(\widetilde X_T)-F(\widetilde X_0).
 \label{eq:closed-form-topological-increment}
\end{equation}
This quantity is independent of the choices of $F$ and
$\widetilde X$. It will be called the topological increment of
$\alpha$ along $X$.

\begin{proposition}[Change of variable formula for closed one-forms]
\label{prop:closed-one-form-integral}
Let $p\geq2$, let $X\in V_p(M,\pi)$, and let
$\alpha\in C^{p-1}(T^*M)$ be closed. Then the sums
\eqref{eq:closed-form-discrete-sum} converge. Writing
\[
 \int_0^T
 \left\langle
 T_\nabla^{p-1}\alpha(X_t),
 d_\pi^{p-1}X_t
 \right\rangle
 :=
 \lim_{n\to\infty}
 I_{\pi,n}^{\nabla,p}(\alpha;X),
\]
one has
\begin{equation}
\begin{split}
 \mathcal A_\alpha(X)
 &=
 \int_0^T
 \left\langle
 T_\nabla^{p-1}\alpha(X_t),
 d_\pi^{p-1}X_t
 \right\rangle +
 \frac1{p!}
 \int_0^T
 \left\langle
 \mathcal D_\nabla^{(p)}\alpha(X_t),
 d[X]_{\pi,t}^{(p)}
 \right\rangle .
\end{split}
\label{eq:closed-form-change-variable}
\end{equation}
Equivalently,
\begin{equation}
\begin{split}
 \int_0^T
 \left\langle
 T_\nabla^{p-1}\alpha(X_t),
 d_\pi^{p-1}X_t
 \right\rangle
 &=
 \mathcal A_\alpha(X)
 -
 \frac1{p!}
 \int_0^T
 \left\langle
 \mathcal D_\nabla^{(p)}\alpha(X_t),
 d[X]_{\pi,t}^{(p)}
 \right\rangle .
\end{split}
\label{eq:closed-form-integral-representation}
\end{equation}
If $\alpha=df$ is globally exact, then
\[
 \mathcal A_\alpha(X)=f(X_T)-f(X_0),
\]
and~\eqref{eq:closed-form-change-variable} reduces to
Theorem~\ref{thm:change-of-variable}.
Moreover, if $\nabla$ and $\widetilde\nabla$ are two affine
connections, then
\begin{equation}
\begin{split}
&
 \int_0^T
 \left\langle
 T_{\widetilde\nabla}^{p-1}\alpha(X_t),
 d_{\pi,\widetilde\nabla}^{p-1}X_t
 \right\rangle
 -
 \int_0^T
 \left\langle
 T_\nabla^{p-1}\alpha(X_t),
 d_{\pi,\nabla}^{p-1}X_t
 \right\rangle
\\
&\qquad =
 \frac1{p!}
 \int_0^T
 \left\langle
 \mathcal D_\nabla^{(p)}\alpha(X_t)
 -
 \mathcal D_{\widetilde\nabla}^{(p)}\alpha(X_t),
 d[X]_{\pi,t}^{(p)}
 \right\rangle .
\end{split}
\label{eq:closed-form-connection-change}
\end{equation}
\end{proposition}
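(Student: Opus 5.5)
The plan is to reduce to Theorem~\ref{thm:change-of-variable} by lifting to the universal cover. Equip $\widetilde M$ with the pulled-back connection $\widetilde\nabla:=q^*\nabla$; since $q$ is a local diffeomorphism, it is an affine local isometry, so $q$ maps $\widetilde\nabla$-geodesics to $\nabla$-geodesics. Fix a lift $\widetilde X$ of $X$ and the primitive $F$ with $dF=q^*\alpha$. Because $\alpha\in C^{p-1}$, we have $F\in C^p(\widetilde M)$. Moreover, $q^*(\mathrm{Sym}\,\nabla^{k-1}\alpha)=\mathrm{Sym}\,\widetilde\nabla^{k-1}dF=\widetilde\nabla^{(k)}F$ for $1\le k\le p$, by naturality of covariant derivatives under local diffeomorphisms; this is \eqref{eq:local-potential-derivatives} applied upstairs.

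The next step is to check that $\widetilde X\in V_p(\widetilde M,\pi)$ and that the increments correspond. Since $\widetilde X$ is uniformly continuous on the compact set $\widetilde X([0,T])$, for large $n$ the points $\widetilde X_{t_i^n},\widetilde X_{t_{i+1}^n}$ lie in an evenly covered convex neighborhood on which $q$ is an affine diffeomorphism. There $\widetilde\xi_i^n:=\widetilde\exp^{-1}_{\widetilde X_{t_i^n}}(\widetilde X_{t_{i+1}^n})$ satisfies $Dq(\widetilde\xi_i^n)=\xi_i^n$. This step needs care: the geodesic $\exp_x(s\xi)$ is the short one, so its unique lift through $\widetilde X_{t_i^n}$ ends at $\widetilde X_{t_{i+1}^n}$, because the lifted path segment stays within the same sheet. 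Use the auxiliary metric $q^*g$ on $\widetilde M$, so that $Dq$ is isometric. The $p$-energy is then preserved, and $\mu^{(p)}_{\widetilde X,n}$ corresponds to $\mu^{(p)}_{X,n}$ under the bundle isomorphism $(Dq)^{\otimes p}:\widetilde X^*\mathrm{Sym}^p(T\widetilde M)\to X^*\mathrm{Sym}^p(TM)$. Weak-* convergence transfers through this continuous isomorphism, in the spirit of Proposition~\ref{prop:smooth-map-transformation} with $\Phi=q$ but used in reverse. This yields $[\widetilde X]^{(p)}_\pi$ with $q^{(p)}_*[\widetilde X]^{(p)}_\pi=[X]^{(p)}_\pi$.

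Then Theorem~\ref{thm:change-of-variable} applied to $F$ and $\widetilde X$ gives convergence of the $\widetilde\nabla$-Taylor sums. Termwise, $(\widetilde\nabla^{(k)}F)_{\widetilde X_{t_i}}(\widetilde\xi_i^{\otimes k})=\mathcal D^{(k)}_\nabla\alpha(X_{t_i})(\xi_i^{\otimes k})$, so these sums equal $I^{\nabla,p}_{\pi,n}(\alpha;X)$. The pairing with $[\widetilde X]^{(p)}$ likewise becomes the pairing of $\mathcal D^{(p)}_\nabla\alpha$ with $[X]^{(p)}$. This proves \eqref{eq:closed-form-change-variable}, and \eqref{eq:closed-form-integral-representation} is a rearrangement. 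Independence of $\mathcal A_\alpha$ from the choices of $F$ and $\widetilde X$ is standard: $F$ is unique up to a constant, and two lifts differ by a deck transformation $\tau$ with $F\circ\tau-F$ constant. In the exact case, take $F=f\circ q$.

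For \eqref{eq:closed-form-connection-change}, write \eqref{eq:closed-form-change-variable} for $\nabla$ and for $\widetilde\nabla$ and subtract. The left side $\mathcal A_\alpha(X)$ does not involve the connection, and $[X]^{(p)}_\pi$ is connection-independent by Proposition~\ref{prop:independence-e}, since both $\exp^{-1}$ maps are admissible increment maps. The difference of integrals therefore equals the difference of the $p$-th order terms. The main obstacle is the sheet-consistency of geodesic lifts in the second step; I would handle it via a Lebesgue-number argument on a finite cover of $X([0,T])$ by convex, evenly covered neighborhoods.
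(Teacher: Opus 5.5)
Your proposal is correct and follows essentially the same route as the paper. You lift to the universal cover with the pulled-back connection and metric $q^*g$, identify the lifted logarithmic increments through $Dq$, and transfer the variation tensor via $(Dq)^{\otimes p}$. You then apply Theorem~\ref{thm:change-of-variable} to the primitive $F$ and obtain the connection-change formula by subtracting the two representations. Your Lebesgue-number handling of sheet consistency is slightly more explicit than the paper's; the one slip is wording, since $q$ is an affine local diffeomorphism for $q^*\nabla$ rather than an ``affine local isometry''.
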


\begin{proof}
We first verify that
$\mathcal A_\alpha(X)$ is well-defined. Any two primitives of
$q^*\alpha$ differ by a constant because $\widetilde M$ is
connected, so the difference
$F(\widetilde X_T)-F(\widetilde X_0)$ does not depend on the chosen
primitive.

Suppose next that $\widetilde X'$ is another lift of $X$. Since
$q:\widetilde M\to M$ is a universal covering, there is a deck
transformation $G:\widetilde M\to\widetilde M$ such that
\[
 \widetilde X'=G\circ\widetilde X.
\]
Because $q\circ G=q$, one has
\[
 d(F\circ G)
 =
 G^*dF
 =
 G^*q^*\alpha
 =
 (q\circ G)^*\alpha
 =
 q^*\alpha
 =
 dF.
\]
It follows that $F\circ G-F$ is constant. Consequently,
\[
\begin{split}
 F(\widetilde X'_T)-F(\widetilde X'_0)
 &=
 F(G(\widetilde X_T))-F(G(\widetilde X_0))
=
 F(\widetilde X_T)-F(\widetilde X_0).
\end{split}
\]
Thus~\eqref{eq:closed-form-topological-increment} is independent of
the lift.

We now lift the analytic construction to $\widetilde M$. The
connection $\nabla$ induces a unique affine connection
$\widetilde\nabla$ on $\widetilde M$ for which $q$ is an affine local
diffeomorphism. Equivalently, for vector fields related by $q$,
\[
 Dq\bigl(\widetilde\nabla_{\widetilde U}\widetilde V\bigr)
 =
 \nabla_{Dq(\widetilde U)}Dq(\widetilde V).
\]
In particular, the exponential maps satisfy
\begin{equation}
 q\bigl(
 \exp_{\widetilde x}^{\widetilde\nabla}(\widetilde v)
 \bigr)
 =
 \exp_{q(\widetilde x)}^\nabla
 \bigl(Dq_{\widetilde x}\widetilde v\bigr)
\label{eq:covering-exponential-compatibility}
\end{equation}
whenever both sides are defined.
Let
\[
 \widetilde\xi_i^n
 :=
 \left(
 \exp_{\widetilde X_{t_i^n}}^{\widetilde\nabla}
 \right)^{-1}
 \bigl(\widetilde X_{t_{i+1}^n}\bigr).
\]
Since $\widetilde X([0,T])$ is compact, it can be covered by finitely
many neighborhoods on which $q$ is a diffeomorphism and on which the
local exponential maps are injective. Uniform continuity of
$\widetilde X$ then implies that, for all sufficiently large $n$,
the points $\widetilde X_{t_i^n}$ and
$\widetilde X_{t_{i+1}^n}$ lie in a common such neighborhood.
Equation~\eqref{eq:covering-exponential-compatibility} therefore
gives
\begin{equation}
 Dq_{\widetilde X_{t_i^n}}\widetilde\xi_i^n
 =
 \xi_i^n
 \label{eq:lifted-logarithmic-increments}
\end{equation}
for every $i$ and every sufficiently large $n$.

Equip $\widetilde M$ with the lifted auxiliary metric
$\widetilde g=q^*g$. Since $Dq$ is then a fiberwise isometry,
\eqref{eq:lifted-logarithmic-increments} implies
\[
 \sum_i|\widetilde\xi_i^n|_{\widetilde g}^p
 =
 \sum_i|\xi_i^n|_g^p.
\]
Thus $\widetilde X$ has finite $p$-energy along $\pi$.
Moreover, define the continuous bundle isomorphism along the lifted
path by
\[
 L_t
 :=
 \bigl(Dq_{\widetilde X_t}^{-1}\bigr)^{\otimes p}
 :
 \operatorname{Sym}^p(T_{X_t}M)
 \longrightarrow
 \operatorname{Sym}^p(T_{\widetilde X_t}\widetilde M).
\]
For sufficiently large $n$,
\[
 \sum_i(\widetilde\xi_i^n)^{\otimes p}\delta_{t_i^n}
 =
 L_\cdot
 \left(
 \sum_i(\xi_i^n)^{\otimes p}\delta_{t_i^n}
 \right).
\]
Since $L_\cdot$ is continuous and the tensor measures on the
right converge weakly to $[X]_\pi^{(p)}$, it follows that
$\widetilde X\in V_p(\widetilde M,\pi)$ and
\begin{equation}
 d[\widetilde X]_{\pi,t}^{(p)}
 =
 \bigl(Dq_{\widetilde X_t}^{-1}\bigr)^{\otimes p}
 d[X]_{\pi,t}^{(p)}.
 \label{eq:lifted-variation-tensor}
\end{equation}

Because $dF=q^*\alpha$ and $q$ is affine with respect to the lifted
connection, covariant differentiation commutes with pullback:
\[
 \widetilde\nabla^{\,k-1}dF
 =
 q^*(\nabla^{k-1}\alpha).
\]
After symmetrization, this gives
\begin{equation}
 \widetilde\nabla^{(k)}F
 =
 q^*\mathcal D_\nabla^{(k)}\alpha,
 \qquad 1\leq k\leq p.
 \label{eq:lifted-covariant-derivatives}
\end{equation}
Combining~\eqref{eq:lifted-logarithmic-increments} and
\eqref{eq:lifted-covariant-derivatives}, we obtain, for every
sufficiently large $n$,
\begin{equation}
\begin{split}
&
 \sum_i\sum_{k=1}^{p-1}
 \frac1{k!}
 \widetilde\nabla^{(k)}F(\widetilde X_{t_i^n})
 \bigl((\widetilde\xi_i^n)^{\otimes k}\bigr)
\\
&\qquad =
 \sum_i\sum_{k=1}^{p-1}
 \frac1{k!}
 \mathcal D_\nabla^{(k)}\alpha(X_{t_i^n})
 \bigl((\xi_i^n)^{\otimes k}\bigr)
 =
 I_{\pi,n}^{\nabla,p}(\alpha;X).
\end{split}
\label{eq:lifted-sums-equal}
\end{equation}

Theorem~\ref{thm:change-of-variable} may now be applied to $F$ and $\widetilde X$. It shows
that the left-hand side of~\eqref{eq:lifted-sums-equal} converges and
that
\[
\begin{split}
 F(\widetilde X_T)-F(\widetilde X_0)
 &=
 \lim_{n\to\infty}I_{\pi,n}^{\nabla,p}(\alpha;X)
 +
 \frac1{p!}
 \int_0^T
 \left\langle
 \widetilde\nabla^{(p)}F(\widetilde X_t),
 d[\widetilde X]_{\pi,t}^{(p)}
 \right\rangle .
\end{split}
\]
By~\eqref{eq:lifted-covariant-derivatives},
\eqref{eq:lifted-variation-tensor}, and duality of fibers
\[
 \left\langle
 \widetilde\nabla^{(p)}F(\widetilde X_t),
 d[\widetilde X]_{\pi,t}^{(p)}
 \right\rangle
 =
 \left\langle
 \mathcal D_\nabla^{(p)}\alpha(X_t),
 d[X]_{\pi,t}^{(p)}
 \right\rangle .
\]
Substituting this identity and the definition of
$\mathcal A_\alpha(X)$ proves
\eqref{eq:closed-form-change-variable} and
\eqref{eq:closed-form-integral-representation}.

If $\alpha=df$ globally, then $F-f\circ q$ is constant, and hence
\[
 \mathcal A_\alpha(X)
 =
 f(X_T)-f(X_0).
\]
This proves the exact-form case.
Finally, apply~\eqref{eq:closed-form-integral-representation} once
with $\nabla$ and once with $\widetilde\nabla$. The topological
increment $\mathcal A_\alpha(X)$ and the variation tensor
$[X]_\pi^{(p)}$ do not depend on the connection. Subtracting the two
representations gives~\eqref{eq:closed-form-connection-change}.
\end{proof}
The topological increment  admits a  local description.
If
$
 0=s_0<s_1<\cdots<s_m=T$
and the path segments
$X([s_{j-1},s_j])$ lie in open sets $U_j$ on which
$\alpha=df_j$, then
\[
 \mathcal A_\alpha(X)
 =
 \sum_{j=1}^m
 \bigl(
 f_j(X_{s_j})-f_j(X_{s_{j-1}})
 \bigr).
\]
The value is independent of the partition and of the local
potentials. For a closed loop it is the usual period of $\alpha$ and
may be nonzero. 
\subsection{Uniform convergence and $p$-th variation identities}

\begin{lemma}[Uniform localization]
\label{lem:uniform-localization}
Let $E\to[0,T]$ be a finite-dimensional normed vector bundle, and let
$\mu_n,\mu\in\mathcal M([0,T];E)$ be finite vector measures such that
$\mu_n\stackrel{*}{\rightharpoonup}\mu.$
Suppose that there are finite positive measures $\nu_n,\nu$ such that
\[
        |\mu_n|\leq \nu_n,
        \qquad
        \nu_n\rightharpoonup\nu,
\]
and assume that $\nu$ has no atoms. Then, for every continuous section
$A\in C([0,T];E^*)$,
\begin{equation}
 \sup_{0\leq t\leq T}
 \left|
   \int_{[0,t)}\langle A_s,d\mu_n(s)\rangle
   -
   \int_{[0,t]}\langle A_s,d\mu(s)\rangle
 \right|
 \longrightarrow 0.
 \label{eq:uniform-localization}
\end{equation}
Moreover, $ |\mu|\leq \nu,$
so $\mu$ has no atoms.
\end{lemma}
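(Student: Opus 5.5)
First establish the bound $|\mu|\leq\nu$, then derive uniform convergence from pointwise convergence of the cumulative integrals at every $t$, using equicontinuity provided by $\nu$.

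\emph{Step 1: $|\mu|\leq\nu$.} Fix a continuous section $A$ of $E^*$ and a nonnegative continuous function $\varphi$ on $[0,T]$. Then
\[
\Bigl|\int\varphi\langle A,d\mu_n\rangle\Bigr|\leq\int\varphi|A|\,d\nu_n .
\]
Pass to the limit $n\to\infty$. The left side converges by weak-* convergence, since $\varphi A$ is continuous. The right side converges by weak convergence of $\nu_n$. We obtain $|\int\varphi\langle A,d\mu\rangle|\leq\int\varphi|A|\,d\nu$. Taking the supremum over $|A|\leq1$ (a trivialization and density argument identifies the total variation locally) and letting $\varphi$ approximate indicators of intervals gives $|\mu|\leq\nu$. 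In particular $\mu$ has no atoms.

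\emph{Step 2: pointwise convergence at each fixed $t$.} Write $F_n(t)=\int_{[0,t)}\langle A,d\mu_n\rangle$ and $F(t)=\int_{[0,t]}\langle A,d\mu\rangle$. For $\delta>0$, choose a continuous cutoff $\chi$ equal to $1$ on $[0,t-\delta]$, equal to $0$ on $[t,T]$, with values in $[0,1]$. Then
\[
|F_n(t)-\textstyle\int\chi\langle A,d\mu_n\rangle|\leq\|A\|_\infty\,\nu_n([t-\delta,t)),
\]
and similarly for $\mu$, bounded by $\|A\|_\infty\,\nu([t-\delta,t])$. Since $[t-\delta,t]$ is closed, the portmanteau theorem gives $\limsup_n\nu_n([t-\delta,t])\leq\nu([t-\delta,t])$. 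Because $\nu$ has no atoms, this tends to $0$ as $\delta\to0$. Combining this with $\int\chi\langle A,d\mu_n\rangle\to\int\chi\langle A,d\mu\rangle$ yields $F_n(t)\to F(t)$.

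\emph{Step 3: upgrade to uniform convergence.} This is the main obstacle, since the $F_n$ are not continuous. I would use a Pólya/Dini-type argument based on the atomlessness of $\nu$. Its distribution function $G(t)=\nu([0,t])$ is continuous, hence uniformly continuous. Given $\varepsilon>0$, pick a grid $0=s_0<\dots<s_m=T$ with $\nu([s_{j-1},s_j])<\varepsilon$ for each $j$. For $t\in[s_{j-1},s_j]$,
\[
|F_n(t)-F(t)|\leq|F_n(s_{j-1})-F(s_{j-1})|+\|A\|_\infty\bigl(\nu_n([s_{j-1},s_j])+\nu([s_{j-1},s_j])\bigr).
\]
By Step 2 the first term is small uniformly in $j$ for $n$ large, since there are only finitely many grid points. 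By portmanteau on closed intervals, $\limsup_n\nu_n([s_{j-1},s_j])\leq\nu([s_{j-1},s_j])<\varepsilon$. Hence
\[
\limsup_n\sup_t|F_n(t)-F(t)|\leq 2\|A\|_\infty\varepsilon.
\]
Since $\varepsilon$ is arbitrary, this gives \eqref{eq:uniform-localization}.
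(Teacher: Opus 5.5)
Your proposal is correct and follows essentially the same route as the paper's proof: domination passed to the limit to get $|\mu|\le\nu$, pointwise convergence at each $t$ via continuous approximations of the indicator of $[0,t)$ together with atomlessness, and then a grid with small $\nu$-mass per cell to upgrade to uniform convergence. The differences (a one-sided cutoff instead of a two-sided sandwich, and the portmanteau $\limsup$ bound on closed intervals instead of convergence on $\nu$-continuity sets) are cosmetic.
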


\begin{proof}
Define finite signed measures $\eta_n$ and $\eta$ by
\[
 d\eta_n(s):=\langle A_s,d\mu_n(s)\rangle,
 \qquad
 d\eta(s):=\langle A_s,d\mu(s)\rangle.
\]
Since $\varphi A$ is a continuous section of $E^*$ for every
$\varphi\in C([0,T])$, weak-* convergence of $\mu_n$ gives $\eta_n\rightharpoonup\eta.$
Furthermore,
\begin{equation}         |\eta_n|
          \leq \|A\|_\infty |\mu_n|
          \leq \|A\|_\infty\nu_n.
 \label{eq:eta-domination}
\end{equation}
Passing to the limit in
\[
 \left|\int_{[0,T]}\varphi\,d\eta_n\right|
 \leq
 \|A\|_\infty\int_{[0,T]}|\varphi|\,d\nu_n
\]
shows that
\[
 \left|\int_{[0,T]}\varphi\,d\eta\right|
 \leq
 \|A\|_\infty\int_{[0,T]}|\varphi|\,d\nu
\]
for every continuous $\varphi$. Hence
\begin{equation}         |\eta|\leq\|A\|_\infty\nu.
 \label{eq:eta-limit-domination}
\end{equation}
Applying this argument locally in bundle trivializations and then
taking the supremum over continuous dual sections of norm at most one
gives $|\mu|\leq\nu$. In particular, both $\mu$ and $\eta$ are
atomless.

Fix $t\in[0,T]$. Since $\nu(\{t\})=0$, the indicator of $[0,t)$ can
be approximated from above and below by continuous functions whose
difference is supported in an arbitrarily small neighborhood of $t$.
The errors in the corresponding integrals against $\eta_n$ are,
by \eqref{eq:eta-domination}, bounded by $\|A\|_\infty$ times the
$\nu_n$-mass of this neighborhood. Weak convergence of the positive
measures $\nu_n$ and atomlessness of $\nu$ therefore yield
\begin{equation}
                    \eta_n([0,t))\longrightarrow\eta([0,t])
                    \qquad\text{for every }t\in[0,T].
 \label{eq:pointwise-cumulative-convergence}
\end{equation}
It remains to make the convergence uniform. Let $\varepsilon>0$.
Since $\nu$ is finite and atomless, there exists a partition $
                 0=s_0<s_1<\cdots<s_m=T$
such that
\begin{equation}
                    \nu([s_j,s_{j+1}])<\varepsilon,
                    \qquad 0\leq j<m.
 \label{eq:small-measure-subdivision}
\end{equation}
Every interval $[s_j,s_{j+1}]$ is a continuity set for $\nu$, and
hence
\[
       \nu_n([s_j,s_{j+1}])
       \longrightarrow
       \nu([s_j,s_{j+1}]).
\]
For $s_j\leq t\leq s_{j+1}$, domination gives
\[
\begin{split}
 \left|\eta_n([0,t))-\eta([0,t])\right|
 &\leq
 \left|\eta_n([0,s_j))-\eta([0,s_j])\right|
 +\|A\|_\infty\nu_n([s_j,s_{j+1}])
 +\|A\|_\infty\nu([s_j,s_{j+1}]).
\end{split}
\]
Since there are only finitely many partition points,
\eqref{eq:pointwise-cumulative-convergence} and
\eqref{eq:small-measure-subdivision} imply
\[
 \limsup_{n\to\infty}
 \sup_{0\leq t\leq T}
 \left|\eta_n([0,t))-\eta([0,t])\right|
 \leq 2\|A\|_\infty\varepsilon.
\]
Letting $\varepsilon\downarrow0$ proves
\eqref{eq:uniform-localization}.
\end{proof}

\begin{proposition}[Uniform convergence and transformation of
$p$-th variation]
\label{prop:uniform-p-variation}
Let $p\geq2$ be an integer, $X\in V_p(M,\pi)$, $\nabla$ 
an affine connection on $M$ and  $f\in C^p(M,\mathbb R)$. Set
\[
 \xi_i^n
 :=
 e^\nabla(X_{t_i^n},X_{t_{i+1}^n}),
 \qquad
 \mu_n
 :=
 \sum_{i=0}^{N_n-1}
 (\xi_i^n)^{\otimes p}\delta_{t_i^n}.
\]
Fix an auxiliary Riemannian metric $g$ and define the 
$p$-energy measures
\begin{equation}
 \nu_n
 :=
 \sum_{i=0}^{N_n-1}
 |\xi_i^n|_g^p\delta_{t_i^n}.
 \label{eq:absolute-energy-measures}
\end{equation}
Assume one of the following conditions:
\begin{enumerate}
\item
$p$ is even and $[X]_\pi^{(p)}$ has no atoms;
\item
$p$ is odd and $\nu_n$ converges weakly to a finite atomless
positive measure $\nu$.
\end{enumerate}
Then,  for $t\in[0,T]$, the stopped covariant Taylor sums
\begin{equation}
\begin{split}
 I_n^\nabla(t;f,X)
 :=
 \sum_{\substack{0\leq i<N_n\\t_i^n<t}}
 \sum_{k=1}^{p-1}\frac1{k!}
 (\nabla^{(k)}f)_{X_{t_i^n}}
 \left(
 e^\nabla
 \bigl(
 X_{t_i^n},X_{t_{i+1}^n\wedge t}
 \bigr)^{\otimes k}
 \right).
\end{split}
\label{eq:stopped-covariant-taylor-sums}
\end{equation}
converge uniformly on $[0,T]$ to
the continuous path
\begin{equation}
\begin{split}
 I_t^\nabla(f,X)
 &=
 f(X_t)-f(X_0)
 -
 \frac1{p!}
 \int_{[0,t]}
 \left\langle
 \nabla^{(p)}f(X_s),d[X]_{\pi,s}^{(p)}
 \right\rangle .
\end{split}
\label{eq:uniform-integral-representation}
\end{equation}
In particular,
\begin{equation}
 \sup_{0\leq t\leq T}
 \left|
 I_n^\nabla(t;f,X)-I_t^\nabla(f,X)
 \right|
 \longrightarrow0.
 \label{eq:uniform-integral-convergence}
\end{equation}

Moreover, $I^\nabla(f,X)\in V_p(\mathbb R,\pi)$ and
\begin{equation}
 d[I^\nabla(f,X)]_{\pi,t}^{(p)}
 =
 \left\langle
 (df_{X_t})^{\otimes p},d[X]_{\pi,t}^{(p)}
 \right\rangle .
\label{eq:p-variation-transformation}
\end{equation}
The convergence underlying \eqref{eq:p-variation-transformation} is
uniform in time:
\begin{equation}
\begin{split}
 \sup_{0\leq t\leq T}
 \Bigg|
 &
 \sum_{i=0}^{N_n-1}
 \left(
 I_{t_{i+1}^n\wedge t}^\nabla(f,X)
 -
 I_{t_i^n\wedge t}^\nabla(f,X)
 \right)^p
 -
 \int_{[0,t]}
 \left\langle
 (df_{X_s})^{\otimes p},d[X]_{\pi,s}^{(p)}
 \right\rangle
 \Bigg|
 \longrightarrow0.
\end{split}
\label{eq:uniform-p-variation-transformation}
\end{equation}
When $p$ is odd, the powers and the limiting measure in
\eqref{eq:p-variation-transformation} are signed.
\end{proposition}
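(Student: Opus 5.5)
The plan is to reduce everything to Lemma~\ref{lem:uniform-localization}. That lemma needs positive dominating measures $\nu_n\rightharpoonup\nu$ with $\nu$ atomless and $|\mu_n|\le\nu_n$; the energy measures \eqref{eq:absolute-energy-measures} satisfy $|\mu_n|=\nu_n$ by \eqref{eq:total-variation-partition-measure}.

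\emph{Odd $p$.} Convergence of $\nu_n$ to an atomless $\nu$ is part of the hypothesis.

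\emph{Even $p$.} Observe that $|v|_g^p=g(v,v)^{p/2}=\langle G_{X_t},v^{\otimes p}\rangle$, where $G:=\Sym(g^{\otimes p/2})$ is a continuous section of $X^*\Sym^p(T^*M)$. Testing $\mu_n\stackrel{*}{\rightharpoonup}[X]^{(p)}_\pi$ against $\varphi G$ for $\varphi\in C([0,T])$ gives $\nu_n\rightharpoonup\nu:=\langle G,d[X]^{(p)}_\pi\rangle$. This measure is atomless because $[X]^{(p)}_\pi$ is.

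In both cases Lemma~\ref{lem:uniform-localization} applies to $\mu_n$. It gives uniform convergence of $t\mapsto\sum_{t_i^n<t}A_{t_i^n}((\xi_i^n)^{\otimes p})$ to $\int_{[0,t]}\langle A,d[X]^{(p)}_\pi\rangle$ for every continuous section $A$. It also shows that $[X]^{(p)}_\pi$ is atomless, so the right-hand side of \eqref{eq:uniform-integral-representation} is continuous in $t$.

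\emph{Uniform convergence of the stopped sums.} Fix $t$ and let $j$ be the last index with $t_j^n<t$. Telescope $f(X_t)-f(X_0)$ over the complete intervals $[t_i^n,t_{i+1}^n]$, $i<j$, together with the final partial interval $[t_j^n,t]$. Apply Corollary~\ref{cor:covariant-taylor-two-points} on each piece, as in the proof of Theorem~\ref{thm:change-of-variable}. This gives
\[
f(X_t)-f(X_0)-I_n^\nabla(t;f,X)=\frac1{p!}\sum_{t_i^n<t}(\nabla^{(p)}f)_{X_{t_i^n}}\bigl((\xi_i^n)^{\otimes p}\bigr)+\mathcal E_n(t).
\]
The error $\mathcal E_n(t)$ collects two things. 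The first is the Taylor remainders of the complete intervals, bounded uniformly in $t$ by $\omega_{f,K}(\rho_n)\sup_n\nu_n([0,T])\to0$. The second is the correction due to replacing $\xi_j^n$ by the stopped increment $e_\nabla(X_{t_j^n},X_t)$ in the $p$-th order term and the remainder of the last piece. This correction is $O(\sup_{|s-r|\le|\pi_n|}d_g(X_s,X_r)^p)$, uniformly in $t$, and tends to $0$ by uniform continuity of $X$ and Lemma~\ref{lem:comparison-increments}. Applying the uniform localization with $A=\nabla^{(p)}f(X_\cdot)$ then yields \eqref{eq:uniform-integral-representation} and \eqref{eq:uniform-integral-convergence}.

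\emph{The $p$-th variation of $I:=I^\nabla(f,X)$.} For a complete interval write
\[
I_{t_{i+1}^n}-I_{t_i^n}=df_{X_{t_i^n}}(\xi_i^n)+a_i^n+b_i^n .
\]
Here $|a_i^n|\le C|\xi_i^n|^2$ comes from the Taylor expansion of $f(X_{t_{i+1}^n})-f(X_{t_i^n})$. The term $b_i^n=-\frac1{p!}\int_{(t_i^n,t_{i+1}^n]}\langle\nabla^{(p)}f,d[X]^{(p)}_\pi\rangle$ satisfies $|b_i^n|\le C\nu((t_i^n,t_{i+1}^n])$, using $|[X]^{(p)}_\pi|\le\nu$. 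Set $u=df(\xi_i^n)$ and $w=u+a_i^n+b_i^n$. The bound
\[
|w^p-u^p|\le p\,|w-u|\,\bigl(|u|+|w-u|\bigr)^{p-1}
\]
controls the discrepancy summed over $i$ by two pieces:
\[
C\rho_n\textstyle\sum_i|\xi_i^n|^p \quad\text{and}\quad C\bigl(\rho_n+\beta_n\bigr)^{p-1}\nu([0,T]),
\]
where $\beta_n:=\max_i\nu((t_i^n,t_{i+1}^n])\to0$ because $\nu$ is finite and atomless. Higher powers of the $b$'s are handled the same way. Both pieces vanish, uniformly in $t$.

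Finally $(df(\xi_i^n))^p=\langle(df)^{\otimes p},(\xi_i^n)^{\otimes p}\rangle$. The lemma applied with $A=(df_{X_\cdot})^{\otimes p}$ therefore gives \eqref{eq:uniform-p-variation-transformation}, with the single partial interval at $t$ again negligible by continuity of $I$ and of $X$. For $p$ even the energy of $I$ is bounded, since $|w|^p\le C(|\xi_i^n|^p+\text{small})$; this yields $I\in V_p(\mathbb R,\pi)$ and \eqref{eq:p-variation-transformation}.

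The main obstacle is obtaining an atomless dominating measure for even $p$, which is resolved by the section $G$ above. A second delicate point is controlling the boundary interval and the $b_i^n$ terms uniformly in $t$, which relies on atomlessness through $\beta_n\to0$.
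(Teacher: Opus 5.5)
Your proposal is correct and follows the paper's proof in its essentials. It uses the same domination of $\mu_n$ (via $\Sym(g^{\otimes p/2})$ in the even case) and the same stopped Taylor decomposition controlled by Lemma~\ref{lem:uniform-localization}. For the $p$-th variation, your one-step comparison $\Delta_i^n I=df_{X_{t_i^n}}(\xi_i^n)+a_i^n+b_i^n$ merely inlines the paper's two steps: Proposition~\ref{prop:smooth-map-transformation} applied to $f(X)$ (its remainder is your $a_i^n$), followed by the finite-variation perturbation by $B/p!$ (your $b_i^n$).

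The only blemish is the qualifier ``for $p$ even'' in your energy bound. Your estimate $|\Delta_i^n I|^p\le C\bigl(|\xi_i^n|^p+\nu((t_i^n,t_{i+1}^n])^p\bigr)$ does not depend on parity. It therefore also gives finite $p$-energy and $I^\nabla(f,X)\in V_p(\mathbb R,\pi)$ in the odd case, as the statement requires.
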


\begin{proof}
We first show that in both cases the tensor measures $\mu_n$ are
dominated by positive measures converging to an atomless limit.

Suppose that $p=2m$ is even. Define the continuous covariant
$p$-tensor field along $X$ by
\[
                      G_t:=\operatorname{Sym}(g_{X_t}^{\otimes m}).
\]
For every $v\in T_{X_t}M$,
\[
                         G_t(v^{\otimes p})=|v|_g^p.
\]
Consequently, for every $\varphi\in C([0,T])$,
\[
 \int_{[0,T]}\varphi\,d\nu_n
 =
 \int_{[0,T]}\langle\varphi G,d\mu_n\rangle
 \longrightarrow
 \int_{[0,T]}
 \left\langle
 \varphi G,d[X]_\pi^{(p)}
 \right\rangle.
\]
Thus
\begin{equation}
 \nu_n\rightharpoonup\nu,
 \qquad
 d\nu(t):=
 \left\langle G_t,d[X]_{\pi,t}^{(p)}\right\rangle.
 \label{eq:even-energy-limit}
\end{equation}
The measure $\nu$ is positive because it is a weak limit of positive
measures. It is atomless because $[X]_\pi^{(p)}$ is atomless.

For odd $p$, the same conclusion is assumed. Since
$ |(\xi_i^n)^{\otimes p}|=|\xi_i^n|_g^p $
we have in both cases
\begin{equation}
 \mu_n\stackrel{*}{\rightharpoonup}[X]_\pi^{(p)},
 \qquad
 |\mu_n|=\nu_n\rightharpoonup\nu,
 \qquad
 \nu\ \text{is atomless}.
 \label{eq:common-domination}
\end{equation}

Let
\[
 \operatorname{osc}(X,\pi_n)
 :=
 \max_{0\leq i<N_n}
 \sup_{u,v\in[t_i^n,t_{i+1}^n]}d_g(X_u,X_v).
\]
Uniform continuity of $X$ gives
\[
                    \operatorname{osc}(X,\pi_n)\longrightarrow0.
\]
Admissible increments are comparable with Riemannian distance on
the compact trace of $X$. Hence, for all sufficiently large $n$,
\begin{equation}
 \sup_{\substack{0\leq t\leq T\\t_i^n<t}}
 \left|
 e^\nabla
 \bigl(
 X_{t_i^n},X_{t_{i+1}^n\wedge t}
 \bigr)
 \right|_g
 \leq
 C\operatorname{osc}(X,\pi_n).
 \label{eq:stopped-increment-bound}
\end{equation}

Define
\[
\begin{split}
 B_n(t)
 :=
 \sum_{\substack{0\leq i<N_n\\t_i^n<t}}
 (\nabla^{(p)}f)_{X_{t_i^n}}
 \left(
 e^\nabla
 \bigl(
 X_{t_i^n},X_{t_{i+1}^n\wedge t}
 \bigr)^{\otimes p}
 \right)
\end{split}
\]
and
\[
 B(t)
 :=
 \int_{[0,t]}
 \left\langle
 \nabla^{(p)}f(X_s),d[X]_{\pi,s}^{(p)}
 \right\rangle.
\]
Lemma~\ref{lem:uniform-localization}, applied with
\[
 A_s=\nabla^{(p)}f(X_s),
\]
shows that
\[
 \sup_{0\leq t\leq T}
 \left|
 \sum_{t_i^n<t}
 (\nabla^{(p)}f)_{X_{t_i^n}}
 \bigl((\xi_i^n)^{\otimes p}\bigr)
 -
 B(t)
 \right|
 \longrightarrow0.
\]
For a fixed $t$, the sum in this display and $B_n(t)$ differ in at
most one term. By \eqref{eq:stopped-increment-bound}, that difference
is bounded uniformly in $t$ by
\[
 C_f\operatorname{osc}(X,\pi_n)^p.
\]
It follows that
\begin{equation}
             \sup_{0\leq t\leq T}|B_n(t)-B(t)|
             \longrightarrow0.
 \label{eq:uniform-highest-order-term}
\end{equation}
The scalar measure defining $B$ is dominated by
$\|\nabla^{(p)}f\|_\infty\nu$. Since $\nu$ is atomless, $B$ is a
continuous finite-variation path.
Apply the covariant Taylor formula to every stopped increment. Summing across the partition gives
\begin{equation}
 f(X_t)-f(X_0)
 =
I_n^\nabla(t;f,X)+\frac1{p!}B_n(t)+R_n(t),
 \label{eq:stopped-taylor-decomposition}
\end{equation}
where
\[
 |R_n(t)|
 \leq
 \omega_{f,K}
 \bigl(C\operatorname{osc}(X,\pi_n)\bigr)
 \sum_{t_i^n<t}
 \left|
 e^\nabla
 \bigl(
 X_{t_i^n},X_{t_{i+1}^n\wedge t}
 \bigr)
 \right|_g^p.
\]
All but at most one of the terms in the last sum are full partition
increments. Therefore,
\[
\begin{split}
 \sup_{0\leq t\leq T}
 \sum_{t_i^n<t}
 \left|
 e^\nabla
 \bigl(
 X_{t_i^n},X_{t_{i+1}^n\wedge t}
 \bigr)
 \right|_g^p
 \leq
 \nu_n([0,T])
 +
 C^p\operatorname{osc}(X,\pi_n)^p.
\end{split}
\]
The right-hand side is uniformly bounded, while
$
 \omega_{f,K}
 \bigl(C\operatorname{osc}(X,\pi_n)\bigr)
 \longrightarrow0.$
Hence
\[
                     \sup_{0\leq t\leq T}|R_n(t)|
                     \longrightarrow0.
\]
Together with \eqref{eq:uniform-highest-order-term} and
\eqref{eq:stopped-taylor-decomposition}, this proves
\eqref{eq:uniform-integral-representation} and
\eqref{eq:uniform-integral-convergence}.

We next identify the $p$-th variation of the limiting integral. Set
\[
 Y_t:=f(X_t),
 \qquad
 A_t:=\frac1{p!}B(t).
\]
Then
\begin{equation}
                     I_t^\nabla(f,X)=Y_t-Y_0-A_t.
 \label{eq:integral-as-fv-correction}
\end{equation}
By Proposition~2.11, $Y\in V_p(\mathbb R,\pi)$ and
\begin{equation}
 d[Y]_{\pi,t}^{(p)}
 =
 \left\langle
 (df_{X_t})^{\otimes p},d[X]_{\pi,t}^{(p)}
 \right\rangle.
 \label{eq:variation-of-fX}
\end{equation}

The path $A$ is continuous and has finite variation. Writing
\[
 \Delta_i^nA:=A_{t_{i+1}^n}-A_{t_i^n},
\]
we obtain
\[
 \sum_i|\Delta_i^nA|^p
 \leq
 \left(\max_i|\Delta_i^nA|\right)^{p-1}
 \operatorname{Var}(A;[0,T])
 \longrightarrow0.
 \label{eq:fv-zero-p-variation}
\]
Similarly, set
\[
 \Delta_i^nY:=Y_{t_{i+1}^n}-Y_{t_i^n},
 \qquad
 \Delta_i^nI:=
 I_{t_{i+1}^n}^\nabla(f,X)-I_{t_i^n}^\nabla(f,X).
\]
By \eqref{eq:integral-as-fv-correction},
\[
                       \Delta_i^nI=\Delta_i^nY-\Delta_i^nA.
\]
For every integer $p\geq2$,
\[
 |(a-b)^p-a^p|
 \leq
 C_p\sum_{k=1}^p|a|^{p-k}|b|^k.
\]
Consequently, H\"older's inequality gives
\[
\begin{split}
 \sum_i
 \left|
 (\Delta_i^nI)^p-(\Delta_i^nY)^p
 \right|
 &\leq
 C_p\sum_{k=1}^p
 \left(\sum_i|\Delta_i^nY|^p\right)^{(p-k)/p}
 \left(\sum_i|\Delta_i^nA|^p\right)^{k/p}
\longrightarrow0.
\end{split}
\label{eq:variation-measure-tv-comparison}
\]
Here the first factors are uniformly bounded because $Y$ has finite
$p$-energy, 
while the second factors converge to zero by the preceding
finite-variation estimate.
It follows that the discrete $p$-th variation measures of
$I^\nabla(f,X)$ and $Y$ differ in total variation by a quantity
tending to zero. Moreover,
\[
 \sum_i|\Delta_i^nI|^p
 \leq
 2^{p-1}
 \left(
   \sum_i|\Delta_i^nY|^p
   +
   \sum_i|\Delta_i^nA|^p
 \right),
\]
so $I^\nabla(f,X)$ has finite $p$-energy. Therefore
\[
             [I^\nabla(f,X)]_\pi^{(p)}=[Y]_\pi^{(p)}.
\]
Combining this identity with \eqref{eq:variation-of-fX} proves
\eqref{eq:p-variation-transformation}. Notice that this argument does
not use the parity of $p$.
It remains to prove uniform convergence of the cumulative variation
sums. Let
\[
 \eta_n^Y
 :=
 \sum_{i=0}^{N_n-1}
 (\Delta_i^nY)^p\delta_{t_i^n}.
\]
By Proposition~2.11,
$ \eta_n^Y\rightharpoonup[Y]_\pi^{(p)}.$
Since $f$ is $C^1$ and $X([0,T])$ is compact, there is a constant
$C_f$ such that, for all sufficiently large $n$,
\[
                       |\Delta_i^nY|\leq C_f|\xi_i^n|_g.
\]
It follows that
\[
                         |\eta_n^Y|\leq C_f^p\nu_n.
\]
Lemma~\ref{lem:uniform-localization} therefore gives
\begin{equation}
 \sup_{0\leq t\leq T}
 \left|
 \sum_{t_i^n<t}(\Delta_i^nY)^p
 -
 [Y]_\pi^{(p)}([0,t])
 \right|
 \longrightarrow0.
 \label{eq:uniform-cumulative-Y-variation}
\end{equation}
On the other hand, the preceding total-variation estimate implies
\[
\begin{split}
 \sup_{0\leq t\leq T}
 \left|
 \sum_{t_i^n<t}
 \left(
   (\Delta_i^nI)^p-(\Delta_i^nY)^p
 \right)
 \right|
 &\leq
 \sum_i
 \left|
   (\Delta_i^nI)^p-(\Delta_i^nY)^p
 \right|
\\
 &\longrightarrow0.
\end{split}
\]
Combining this with \eqref{eq:uniform-cumulative-Y-variation} gives
uniform convergence for sums of full increments.

The stopped sum in
\eqref{eq:uniform-p-variation-transformation} differs from the
corresponding full-increment sum in at most one term. Since
$I^\nabla(f,X)$ is continuous,
\[
 \max_i\sup_{u,v\in[t_i^n,t_{i+1}^n]}
 |I_u^\nabla(f,X)-I_v^\nabla(f,X)|
 \longrightarrow0.
\]
The difference between the stopped and full-increment sums is
therefore bounded uniformly in $t$ by
\[
 2
 \left(
 \max_i\sup_{u,v\in[t_i^n,t_{i+1}^n]}
 |I_u^\nabla(f,X)-I_v^\nabla(f,X)|
 \right)^p,
\]
which converges to zero. Equation
\eqref{eq:uniform-p-variation-transformation} now follows from
\eqref{eq:variation-of-fX}.
\end{proof}

\begin{remark}[$p=2$: a pathwise It\^o isometry]
{\em For $p=2$, the preceding proposition gives
\[
    d[I^\nabla(f,X)]_{\pi,t}^{(2)}
    =
    \left\langle
        (df_{X_t})^{\otimes 2},
        d[X]_{\pi,t}^{(2)}
    \right\rangle.
\]
Thus the quadratic variation of the pathwise It\^o integral is
obtained by contracting the second variation tensor of the path
with the tensor square of the integrand.

In particular, let $B$ be Brownian motion on a Riemannian manifold
$(M,g)$ and let $\nabla$ be the Levi--Civita connection. Then  along any sequence of partitions along which the quadratic variation
of $B$ is realized pathwise, we have \cite[Chap. V]{ikedawatanabe} 
\[
    d[B]_{\pi,t}^{(2)}
    =
    g^{-1}_{B_t}\,dt.
\]
Hence by Proposition \ref{prop:uniform-p-variation},
\[
\begin{aligned}
    d[I^\nabla(f,B)]_{\pi,t}^{(2)}
    &=
    \left\langle
        (df_{B_t})^{\otimes 2},
        g^{-1}_{B_t}
    \right\rangle dt
    =
    |df_{B_t}|_{g^{-1}}^2\,dt
    =
    |\operatorname{grad}f(B_t)|_g^2\,dt.
\end{aligned}
\] where ${\rm grad} f=(df)^{\#}=g^{-1}(df,.) \in TM.$ Therefore
\[
    [I^\nabla(f,B)]_\pi^{(2)}([0,t])
    =
    \int_0^t
    |\operatorname{grad}f(B_s)|_g^2\,ds.
\]
This is a pathwise version of the It\^o isometry, derived in \cite{ananova2017} in the Euclidean case: the second variation
of the pathwise It\^o integral is exactly the accumulated squared norm
of the integrand. When the pathwise integral is identified with the
classical stochastic It\^o integral, taking expectations yields, under
the usual square-integrability assumptions, the familiar It\^o isometry relation \cite{ikedawatanabe}:
\[
    \mathbb E
    \left[
        \left|
            \int_0^t df(B_s)\,d^\nabla B_s
        \right|^2
    \right]
    =
    \mathbb E
    \left[
        \int_0^t
        |\operatorname{grad}f(B_s)|_g^2\,ds
    \right]
\] }
\end{remark}

\begin{remark}[Even versus odd orders]
\label{rem:even-odd-variation}
{\rm When $p=2m$ is even, the absolute $p$-energy measure is obtained by
contracting the tensor-valued discrete variation with the positive
covariant tensor
\[  \operatorname{Sym}(g^{\otimes m}).
\]
Consequently, convergence of the tensor variation measures
automatically implies convergence of the positive energy measures:
\[
 \nu_n\rightharpoonup
 \left\langle
 \operatorname{Sym}(g^{\otimes m}),d[X]_\pi^{(p)}
 \right\rangle.
\]
Thus, in the even-order case, atomlessness of $[X]_\pi^{(p)}$ is
sufficient for uniform localization.
For even $p$, equation
\eqref{eq:p-variation-transformation} may be interpreted as an
isometry relation, analogous to the relations obtained in the Euclidean case in 
 \cite[Thm 2.1]{ananova2017} and \cite[Thm~2.1]{CP2019}:
 \begin{equation}
    [I^\nabla(f,X)]_\pi^{(p)}([0,t])
    =
    \int_{[0,t]}
    \left\langle
        (df_{X_s})^{\otimes p},
        d[X]_{\pi,s}^{(p)}
    \right\rangle.\label{eq:manifold-isometry-global}
\end{equation}
The relation \eqref{eq:manifold-isometry-global} states that the pathwise integral  \eqref{eq:pathwiseintegral} preserves f $p-$th variation:  the $p$-th variation of the integral is equal to (the $p-$th power of) a weighted $L^p$-seminorm of  $df$. 
Note that only the first-order term $df$ is involved in this relation.

For odd $p$, the tensor variation is signed. Weak convergence of
\[
             \sum_i(\xi_i^n)^{\otimes p}\delta_{t_i^n}
\]
does not control the positive measures
\[
             \sum_i|\xi_i^n|_g^p\delta_{t_i^n},
\]
because large positive and negative contributions may cancel in the
tensor limit. The additional convergence assumption on the absolute
$p$-energy measures excludes such concentration and cancellation.
The right-hand side is generally signed and does not
define a seminorm. In that case,
\eqref{eq:p-variation-transformation} is more appropriately called a
$p$-th variation identity.}
\end{remark}

\section{Higher-order tangent geometry and a transfer principle}
\label{sec:transfer-principle}

The change of variable formula obtained in the previous section admits
a geometric interpretation which extends the second-order differential
geometry introduced by Schwartz \cite{schwartz1982,schwartz1984}
; see also
Émery \cite{emery}. The purpose of this section is to formulate
this interpretation for an arbitrary integer order.
\subsection{Higher-order tangent vectors}

Denote by $J_x^p(M,\mathbb R)$ the space of
$p$-jets at $x\in M$ of smooth real-valued functions on $M$. Two smooth
functions $f,g$ define the same $p$-jet at $x$ if, in any local
coordinate system around $x$, all their partial derivatives of order
at most $p$ coincide at $x$.
We define the space of \emph{$p$-th order tangent vectors at $x$} by
\begin{equation}
    \tau_x^{(p)}M
    :=
    \left(
        J_x^p(M,\mathbb R)/\mathbb R
    \right)^*.
    \label{eq:p-order-tangent-space}
\end{equation}
Equivalently, $\tau_x^{(p)}M$ is the vector space of differential
operators
$L:C^\infty(M)\longrightarrow\mathbb R$
of order at most $p$, evaluated at $x$, which annihilate constants.
The spaces $\tau_x^{(p)}M$ form a finite-dimensional vector bundle $
    \tau^{(p)}M\longrightarrow M.$
$\tau^{(1)}M=TM$ is the  tangent bundle, $\tau^{(2)}M$ is the second-order tangent bundle appearing
in the differential geometry of Schwartz \cite{schwartz1982,schwartz1984}.
There is a natural filtration
\[
    TM=\tau^{(1)}M
    \subset\tau^{(2)}M
    \subset\cdots
    \subset\tau^{(p)}M,
\]
and the principal-symbol map yields the canonical short exact sequence
\begin{equation}
    0
    \longrightarrow
    \tau^{(p-1)}M
    \longrightarrow
    \tau^{(p)}M
    \overset{\sigma_p}{\longrightarrow}
    \Sym^p(TM)
    \longrightarrow
    0.
\end{equation}
Thus
\[
    \tau^{(p)}M/\tau^{(p-1)}M
    \simeq
    \Sym^p(TM),
\]
and the highest-order component of an order-$p$ differential operator
is a symmetric contravariant $p$-tensor.
This higher-order tangent bundle has a classical antecedent in the
osculating bundles of Pohl \cite{Pohl1962} and the jet bundle of Ehresmann \cite{ehresmann2,saunders1989}. In the terminology of \cite{Pohl1962}, the bundle of
$p$-th order tangent vectors is the dual of the bundle of $p$-jets
of functions and fits into the natural exact sequence
\[
    0\longrightarrow \tau^{(p-1)}M
    \longrightarrow \tau^{(p)}M
    \longrightarrow \Sym^p(TM)
    \longrightarrow0.
\]
As emphasized by Pohl \cite{Pohl1962},   this filtration has {\it no canonical splitting}
determined by the smooth structure alone.
\subsection{Splitting by a connection}
Let $\nabla$ be an affine connection on $M$. Recall the symmetrized
covariant derivatives
\[
    \nabla^{(k)}f:=\Sym(\nabla^k f)
    \in\Gamma(\Sym^k(T^*M)).
\]
For $f\in C^p(M,\mathbb R)$, we denote
\begin{equation}
   T_\nabla^{p-1}f
    :=
    \left(
        \nabla f,
        \frac1{2!}\nabla^{(2)}f,
        \ldots,
        \frac1{(p-1)!}\nabla^{(p-1)}f
    \right),  \label{eq:covariant-truncated-jet}
\end{equation}
for its covariant $(p-1)$-jet. Thus the manifold analogue of the Taylor jet used
in~\cite{CP2019} is
\[
    T_\nabla^{p-1}f
    \in
    \Gamma\left(
        T^*M\oplus\Sym^2(T^*M)\oplus\cdots
        \oplus\Sym^{p-1}(T^*M)
    \right).
\]
The factorial normalization is chosen so
that its pairing with tensor increments coincides with the
corresponding terms in the covariant Taylor expansion.
Dualizing and removing the zeroth-order component gives a
connection-dependent splitting
\begin{equation}
    \tau_x^{(p)}M
    \simeq
    T_xM
    \oplus
    \Sym^2(T_xM)
    \oplus\cdots\oplus
    \Sym^p(T_xM).
    \label{eq:connection-tangent-splitting}
\end{equation}
The relation between this covariant representation and the intrinsic
$p$-jet is given by the following splitting result.
\begin{proposition}[Connection splitting of higher-order tangent vectors]
\label{prop:connection-splitting}
Let $M$ be a smooth manifold equipped with an affine connection
$\nabla$, and let $p\geq1$. Then the map
\[
    J_x^p(M,\mathbb R)
    \longrightarrow
    \mathbb R
    \oplus
    \bigoplus_{k=1}^p \Sym^k(T_x^*M),
    \qquad
    j_x^p f
    \longmapsto
    \bigl(
        f(x),\nabla f(x),\ldots,\nabla^{(p)}f(x)
    \bigr),
\]
is a linear isomorphism. Consequently, dualizing and removing the
zeroth-order component yields a connection-dependent bundle
isomorphism
\[
    S_p^\nabla:
    \tau^{(p)}M
    \longrightarrow
    \bigoplus_{k=1}^p\Sym^k(TM).
\]
Thus every $L\in\tau_x^{(p)}M$ is represented uniquely by tensors
$    A_k\in\Sym^k(T_xM), 1\leq k\leq p,$
such that
\begin{equation}
    Lf
    =
    \sum_{k=1}^p
    \frac1{k!}
    \left\langle
        \nabla^{(k)}f(x),A_k
    \right\rangle.
    \label{eq:p-tangent-connection-representation}
\end{equation}
Moreover, this splitting is compatible with the filtration
\[
    0\longrightarrow\tau^{(p-1)}M
    \longrightarrow\tau^{(p)}M
    \longrightarrow\Sym^p(TM)
    \longrightarrow0.
\]
\end{proposition}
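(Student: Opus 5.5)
The plan is to prove the isomorphism of jets by a triangularity argument in local coordinates, and then obtain everything else by dualization.

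Fix $x\in M$ and a chart $\varphi=(x^1,\dots,x^d)$ around $x$. Both spaces have the same dimension, $\sum_{k=0}^p\binom{d+k-1}{k}$. This is the number of multi-indices $|\beta|\le p$, which parametrize $J^p_x(M,\mathbb R)$ through the partial derivatives $\partial^\beta f(x)$. The map $j^p_xf\mapsto(f(x),\nabla f(x),\dots,\nabla^{(p)}f(x))$ is well defined, since $\nabla^{(k)}f(x)$ depends only on partial derivatives of $f$ at $x$ of order $\le k$. It is also linear. So it suffices to show injectivity, or equivalently triangularity with invertible diagonal blocks.

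The key computation is an induction on $k$. Using the Christoffel symbols $\Gamma^m_{ij}$, one shows that
\[
(\nabla^k f)_{i_1\dots i_k}(x)=\partial_{i_1}\cdots\partial_{i_k}f(x)+\sum_{|\beta|<k}c^{\beta}_{i_1\dots i_k}(x)\,\partial^\beta f(x),
\]
where the coefficients $c^\beta$ are polynomials in the $\Gamma$'s and their derivatives at $x$. This follows from $\nabla^{k}f=\nabla(\nabla^{k-1}f)$: each covariant derivative of a tensor is a partial derivative plus terms of the form $\Gamma\cdot(\text{lower tensor})$.

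Symmetrizing leaves the top-order part $\partial_{i_1}\cdots\partial_{i_k}f(x)$ unchanged, because it is already symmetric. Hence $\nabla^{(k)}f(x)$ equals the $k$-th symmetric partial-derivative tensor plus a linear combination of lower-order derivatives. In the bases given by partial derivatives and by symmetric coordinate tensors, the map is therefore block lower-triangular with identity diagonal blocks, and so it is invertible.

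An alternative, coordinate-free check of injectivity uses \eqref{eq:derivatives-exp-zero}. If all $\nabla^{(k)}f(x)$ vanish for $k\le p$, then $s\mapsto f(\exp_x(sv))$ vanishes to order $p$ for every $v$. In normal coordinates $y=\exp_x^{-1}$, the function $f\circ\exp_x$ then has vanishing $p$-jet at $0$: its Taylor polynomial $P$ satisfies $P(sv)=O(s^{p+1})$ for all $v$, hence $P=0$ by homogeneity. So $j^p_xf=0$. I would include this as a remark confirming the claim, but the coordinate triangularity is what I will actually use.

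Smoothness in $x$ follows because the coefficients $c^\beta$ depend smoothly on $x$. This gives a bundle isomorphism $J^pM\cong\mathbb R\oplus\bigoplus_{k=1}^p\Sym^k(T^*M)$. Quotienting by constants, which correspond to the $\mathbb R$ component because $\nabla^{(k)}c=0$ for $k\ge1$, gives $J^p/\mathbb R\cong\bigoplus_{k\ge1}\Sym^k(T^*M)$.

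Dualizing, and using $\Sym^k(T^*_xM)^*\cong\Sym^k(T_xM)$, yields $S^\nabla_p$. For $L\in\tau^{(p)}_x M$, define $A_k$ as $k!$ times the component of $L\circ(\text{inverse map})$ on $\Sym^k(T^*M)$. Then $Lf=\sum_k\frac1{k!}\langle\nabla^{(k)}f(x),A_k\rangle$, and uniqueness follows from bijectivity.

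For filtration compatibility, I would check two things:

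\begin{itemize}
\item \emph{Sub-bundle.} $\tau^{(p-1)}M$ is sent onto $\bigoplus_{k\le p-1}\Sym^k(TM)$. An operator of order $\le p-1$ vanishes on jets whose derivatives of order $\le p-1$ vanish. By triangularity these are exactly the jets with $\nabla^{(k)}f(x)=0$ for $k<p$, so $A_p=0$. Conversely, $A_p=0$ means $L$ factors through $J^{p-1}$.
\item \emph{Top component.} $A_p=\sigma_p(L)$. Since the diagonal block is the identity, pairing $L$ with a function whose jet is purely top-order recovers the principal symbol.
\end{itemize}

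The main obstacle is the bookkeeping for the lower-order terms in the induction. The essential point is that torsion and symmetrization never affect the top-order block, and I would state that explicitly rather than compute the coefficients $c^\beta$.
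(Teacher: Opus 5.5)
Your proposal is correct and follows the paper's own argument. Like the paper, you show that $j^p_xf\mapsto(f(x),\nabla f(x),\ldots,\nabla^{(p)}f(x))$ is triangular in local coordinates with identity diagonal blocks, then quotient by constants and dualize; your two-part filtration check supplies the detail the paper dismisses as immediate. One small normalization point: on functions with vanishing $(p-1)$-jet one has $Lf=\frac1{p!}\langle\nabla^{(p)}f(x),A_p\rangle$, so the paper's convention gives $\sigma_p(L)=\frac1{p!}A_p$ rather than $A_p$ (see the $\frac1{p!}\mathrm{pr}_p$ in its diagram and the symbol $\frac12 g^{-1}$ for Brownian motion), though this does not affect compatibility with the filtration.
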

\begin{proof}
In local coordinates, $\nabla^{(k)}f(x)$ is equal to the symmetric
$k$-th partial derivative of $f$ at $x$ plus a linear combination
of derivatives of $f$ of order strictly less than $k$. Hence the
displayed map on jets is triangular, with the identity on each
diagonal block, and is therefore an isomorphism. Dualizing gives the
stated splitting. Compatibility with the filtration follows
immediately from the construction.
\end{proof}
In particular, the compensated integral introduced in
Section~\ref{sec:change-of-variable} is naturally paired with the
lower-order covariant jet:
\[
    \int_0^T
    \left\langle
        T_\nabla^{p-1}f(X_t),
        d_\pi^{p-1}X_t
    \right\rangle .
\]
The splitting \eqref{eq:connection-tangent-splitting} depends on the
connection, whereas the differential operator $L$ itself does not.

\begin{remark}{\em 
A related use of jet bundles as the intrinsic replacement for
polynomials on manifolds appears in the construction of regularity structures
on manifolds and vector bundles by Hairer and Singh
\cite{hairersingh} who note that the jet bundle
is intrinsically filtered, while a choice of geometric structure
provides a grading in terms of covariant derivatives. This is related to the connection splitting
of Proposition~\ref{prop:connection-splitting}. Pullback
of jets under a nonlinear smooth map is upper triangular rather than
graded, reflecting the same mixing of lower-order components which
underlies the transfer principle considered here. Our use of this
geometry is different: we work with the dual of the reduced jet
bundle and use the $p$-th variation tensor to define a pathwise
functional on reduced $p$-jets.}
\end{remark}
\subsection{The case $p=2$: Schwartz's second-order geometry}

When $p=2$, an element $L\in\tau_x^{(2)}M$ takes in local coordinates
the form
\begin{equation}
    L
    =
    a^i\partial_i
    +
    \frac12 a^{ij}\partial_i\partial_j,
    \qquad
    a^{ij}=a^{ji}.
    \label{eq:second-order-vector-coordinate}
\end{equation}
The coefficients $a^{ij}$ transform tensorially, whereas the
coefficients $a^i$ do not: under a nonlinear change of coordinates
they acquire terms involving the second derivatives of the coordinate
transformation. Thus the object $(a^i,a^{ij})$
cannot be interpreted as a tangent vector together with a
symmetric $2$-tensor independently of any additional geometric
structure.

A connection removes this ambiguity. With respect to $\nabla$, one
may write intrinsically
\begin{equation}
    Lf
    =
    df_x(V)
    +
    \frac12
    \left\langle
        \nabla^{(2)}f(x),A
    \right\rangle,\quad{\rm with}\quad V\in T_xM,
    \quad
    A\in\Sym^2(T_xM).
    \label{eq:second-order-connection-splitting}
\end{equation}
 
This is the connection splitting of Schwartz's second-order tangent
vector \cite{schwartz1982}, which underlies Émery's intrinsic formulation of Itô
calculus on manifolds \cite{emery}. So, for a path $X\in V_2(M,\pi)$,   the differential expression
\[
    d_\pi^\nabla X
    +
    \frac12\,d[X]_\pi^{(2)}
\]
should  behave as a second-order tangent differential relative to the connection.
\subsection{A transfer principle}
Proposition~\ref{prop:connection-splitting} suggests a geometric
interpretation of the change of variable formula, inspired by L. Schwartz's 'transfer principle' \cite{emery2006,schwartz1984}. Define the order-$p$ differential along $X$ 
\begin{equation}
\boxed{
    \mathbf{d}_\pi^{(p)}X
    :=
    d_\pi^{p-1}X\,\nabla_{p-1}
    +
    \frac1{p!}
    d[X]_\pi^{(p)}\,\nabla^{(p)} .
}
    \label{eq:p-order-differential-formal}
\end{equation}
The notation is symbolic: the first term denotes the lower-order
compensated differential acting on the covariant $(p-1)$-jet, while
the second term is the contraction of the $p$-th variation tensor with
the symmetric $p$-th covariant derivative.
Its action on $f\in C^p(M,\mathbb R)$ is defined by
\begin{equation}
\begin{split}
    \left\langle
        j^pf(X_t),
        \mathbf{d}_\pi^{(p)}X_t
    \right\rangle
    &:=
    \left\langle
        T_\nabla^{p-1}f(X_t),
        d_\pi^{p-1}X_t
    \right\rangle +
    \frac1{p!}
    \left\langle
        \nabla^{(p)}f(X_t),
        d[X]_{\pi,t}^{(p)}
    \right\rangle .
    \label{eq:p-order-differential-action}
\end{split}
\end{equation}
The change of variable formula may then be written formally as
\begin{equation}
    f(X_T)-f(X_0)
    =
    \int_0^T
    \left\langle
        j^pf(X_t),
        \mathbf{d}_\pi^{(p)}X_t
    \right\rangle.
    \label{eq:integrated-transfer-chain-rule}
\end{equation}
\paragraph{Transfer principle} {\em The combination
\begin{equation*}
     d_\pi^{p-1}X\,\nabla_{p-1}
    +
    \frac1{p!}
    d[X]_\pi^{(p)}\,\nabla^{(p)}
\end{equation*}
behaves as an intrinsic
order-$p$ tangent vector along $X$ under smooth change  of coordinates.}
\vskip 1cm
Although the individual lower-order components appearing
in a coordinate representation need not transform tensorially, their
combination with the highest-order tensor
$d[X]_\pi^{(p)}$ defines an object whose action depends only on the reduced
$p$-jet of the test function.

The statement above is formal: the mathematical content of this transfer principle is an intrinsic jet formulation of Theorem \ref{thm:change-of-variable}:
\begin{proposition}[Intrinsic jet formulation of the change-of-variable formula]
\label{prop:transfer-principle}
Let $p\geq2$, let $X\in V_p(M,\pi)$, and let $\nabla$ be an
affine connection on $M$. For $f\in C^p(M,\mathbb R)$, set
\[
\begin{aligned}
\mathcal D_{\pi,X}^{(p),\nabla}(f)
:={}&
\int_0^T
\left\langle
T_\nabla^{p-1}f(X_t),
d_\pi^{p-1}X_t
\right\rangle
+
\frac1{p!}
\int_0^T
\left\langle
\nabla^{(p)}f(X_t),
d[X]_{\pi,t}^{(p)}
\right\rangle .
\end{aligned}
\]
Then $\mathcal D_{\pi,X}^{(p),\nabla}(f)$ depends only on the
reduced $p$-jet
\[
t\longmapsto
[j_{X_t}^p f]
\in
X^*\bigl(J^p(M,\mathbb R)/\mathbb R\bigr),
\]
and is independent of the choice of affine connection $\nabla$.
Moreover,
\[
\mathcal D_{\pi,X}^{(p),\nabla}(f)
=
f(X_T)-f(X_0).
\]
The connection-dependent decomposition into the 
lower-order term and the $p$-th variation term represents an
intrinsic linear functional of the reduced $p$-jet of $f$ along $X$.
\end{proposition}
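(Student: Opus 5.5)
The plan is to derive everything from Theorem~\ref{thm:change-of-variable}, which is already proved for an arbitrary affine connection. The order of the three claims matters. We first establish the identity $\mathcal D_{\pi,X}^{(p),\nabla}(f)=f(X_T)-f(X_0)$. Independence of $\nabla$ and dependence only on the reduced jet will then follow as formal consequences.

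\textbf{Step 1: the identity.} Fix $\nabla$ and apply Theorem~\ref{thm:change-of-variable} to $f$ and $X$.
\begin{itemize}
\item It guarantees that the covariant left Taylor sums $I_{\pi_n}^{\nabla,p}(f,X)$ converge, so the first term of $\mathcal D_{\pi,X}^{(p),\nabla}(f)$ is well defined.
\item The second term is well defined by Definition~\ref{def:p-variation}, since $t\mapsto\nabla^{(p)}f(X_t)$ is a continuous section of $X^*\Sym^p(T^*M)$.
\item Equation~\eqref{eq:manifold-change-of-variable} then says precisely that $\mathcal D_{\pi,X}^{(p),\nabla}(f)=f(X_T)-f(X_0)$.
\end{itemize}

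\textbf{Step 2: connection independence.} The right-hand side $f(X_T)-f(X_0)$ involves no connection. Hence $\mathcal D_{\pi,X}^{(p),\nabla}(f)=\mathcal D_{\pi,X}^{(p),\widetilde\nabla}(f)$ for any two affine connections $\nabla,\widetilde\nabla$. This also recovers \eqref{eq:closed-form-connection-change} for $\alpha=df$.

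\textbf{Step 3: dependence only on the reduced $p$-jet.} We read this as follows: if $f_1,f_2\in C^p(M,\mathbb R)$ satisfy $[j^p_{X_t}f_1]=[j^p_{X_t}f_2]$ for all $t$, then $\mathcal D(f_1)=\mathcal D(f_2)$.

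First we check that the functional is linear in $f$.
\begin{itemize}
\item The discrete sums $I_{\pi_n}^{\nabla,p}(f,X)$ are linear in $f$.
\item Their limits exist for every $f$, so the limit functional is linear.
\item The $p$-th variation pairing is also linear in $f$.
\end{itemize}
So it suffices to show $\mathcal D(h)=0$ whenever $h=f_1-f_2$ has vanishing reduced $p$-jet along $X$, meaning $\nabla^{(k)}h(X_t)=0$ for $1\le k\le p$ and all $t$.

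There are two routes.
\begin{itemize}
\item \emph{Directly from the definition.} Each summand of $I_{\pi_n}^{\nabla,p}(h,X)$ evaluates $\nabla^{(k)}h$ at a point $X_{t_i^n}$ of the path. For $k\le p-1$ these all vanish, so the discrete sums are identically zero. The $p$-th variation term vanishes pointwise in its integrand. Hence $\mathcal D(h)=0$.
\item \emph{Via the identity.} One can also see this from $h(X_T)-h(X_0)$. Vanishing of $dh$ along $X$ alone does not make $h\circ X$ constant, so this is less immediate. The definition route above avoids that issue, which is why we use it.
\end{itemize}
By Proposition~\ref{prop:connection-splitting}, the condition "$\nabla^{(k)}h(X_t)=0$ for $1\le k\le p$" is equivalent to $[j^p_{X_t}h]=0$ in $J^p/\mathbb R$. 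The reason is that the jet map is a triangular isomorphism, and it is compatible with quotienting by constants, since constants only affect the zeroth component. This equivalence holds for every connection, so the jet condition is intrinsic.

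\textbf{Interpretation and main obstacle.} Combining the steps, $\mathcal D$ factors through the section $t\mapsto[j^p_{X_t}f]$ and does not depend on $\nabla$. Through the splitting $S_p^\nabla$, the term $\frac1{p!}\,d[X]_\pi^{(p)}$ is the top component and the compensated differential is the lower part. By Proposition~\ref{prop:smooth-map-transformation}, the top component is the canonical symbol.

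The main obstacle is the definitional point in Step 3. For a general $f\in C^p$, the jet $j^p f$ is only defined in the $C^p$ sense, so we must be clear that "depends only on the jet" means the functional factors through the continuous section $t\mapsto[j^p_{X_t}f]$. It does not mean the functional is continuous in that section; Theorem~\ref{thm:change-of-variable} gives no such continuity, since the lower-order sums need not converge individually. The proof should therefore state the factorization claim only, and justify it by the exact vanishing of the discrete sums rather than by any limiting estimate.
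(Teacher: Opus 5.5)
Your proof is correct and follows the paper's argument. The identity $\mathcal D_{\pi,X}^{(p),\nabla}(f)=f(X_T)-f(X_0)$ comes directly from Theorem~\ref{thm:change-of-variable}, connection-independence holds because the endpoint difference involves no connection, and dependence only on the reduced $p$-jet follows from Proposition~\ref{prop:connection-splitting}. Your Step~3 makes explicit what the paper leaves implicit: the discrete sums and the $p$-th variation integrand depend on $f$ only through $\nabla^{(k)}f$, $1\le k\le p$, evaluated on the path, and by the triangular jet isomorphism this is exactly the reduced $p$-jet along $X$.
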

\begin{proof}
The first assertion follows from Proposition~\ref{prop:connection-splitting},
since the covariant derivatives appearing in the definition of
$\mathcal D_{\pi,X}^{(p),\nabla}$ are the connection-dependent
components of the reduced $p$-jet. By
Theorem~\ref{thm:change-of-variable},
\[
    \mathcal D_{\pi,X}^{(p),\nabla}(f)
    =f(X_T)-f(X_0),
\]
and the right-hand side is independent of $\nabla$.
\end{proof}
\paragraph{Intuition behind the transfer principle}

The order-$p$ interpretation is already visible at the level of a
single 'logarithmic' increment.
Let
\[
    \xi=e_\nabla(x,y)=\exp_x^{-1}(y).
\]
Associated with $(x,y)$ define the differential operator
\begin{equation}
    \mathcal T_{x,y}^{(p)}
    :=
    \sum_{k=1}^p
    \frac1{k!}
    \xi^{\otimes k}\nabla^{(k)}
    \in\tau_x^{(p)}M.
    \label{eq:discrete-order-p-vector}
\end{equation}
Its action on $f$ is
\[
    \mathcal T_{x,y}^{(p)}f
    =
    \sum_{k=1}^p
    \frac1{k!}
    \left\langle
        \nabla^{(k)}f(x),
        \xi^{\otimes k}
    \right\rangle.
\]
The covariant Taylor formula gives
\begin{equation}
    f(y)-f(x)
    =
    \mathcal T_{x,y}^{(p)}f
    +
    o(|\xi|^p).
    \label{eq:discrete-transfer-taylor}
\end{equation}
Thus every small displacement on the manifold determines an order-$p$ tangent vector up to an
error of order $o(|\xi|^p)$. 
This discrete order-$p$ tangent structure motivates the symbolic
notation \eqref{eq:p-order-differential-formal}; after summation, Proposition~\ref{prop:transfer-principle}
identifies the resulting action on $p$-jets.

\begin{remark}[Relation with Schwartz--Émery geometry]
{\em For $p=2$, \eqref{eq:p-order-differential-formal} becomes
\[
    d_\pi^\nabla X\,\nabla
    +
    \frac12
    d[X]_\pi^{(2)}\,\nabla^{(2)},
\]
which is exactly the structure of a Schwartz second-order tangent
vector \cite{schwartz1984}. In stochastic calculus, the quadratic variation supplies the
principal symbol of the second-order differential, while the
first-order component depends on the chosen connection. This is the
geometric mechanism underlying Émery's formulation \cite{emery} of Itô calculus on
manifolds.

The construction above shows that the same mechanism extends naturally
to arbitrary order $p$: the $p$-th variation tensor  supplies the canonical highest-order component corresponding to the principal-symbol part of the higher-order tangent geometry.}
\end{remark}
\begin{remark}[Exact sequence and connection-dependent splitting]
\label{rem:exact-sequence-splitting}
{\em 
For each $p\geq2$, the filtration of differential operators by order
gives the canonical short exact sequence
\begin{equation}
\boxed{ 0
\longrightarrow
\tau^{(p-1)}M
\overset{\iota_{p-1,p}}{\longrightarrow}
\tau^{(p)}M
\overset{\sigma_p}{\longrightarrow}
\Sym^p(TM)
\longrightarrow
0 .
}
\label{eq:exact-sequence-order-p}
\end{equation}
Here $\iota_{p-1,p}$ is the natural inclusion of differential
operators of order at most $p-1$ into differential operators of order
at most $p$, and $\sigma_p$ is the principal-symbol map. Thus canonically
\begin{equation}
    \tau^{(p)}M/\tau^{(p-1)}M
    \simeq
    \Sym^p(TM).
\label{eq:quotient-principal-symbol}
\end{equation}
A choice of affine connection $\nabla$ provides a splitting of
\eqref{eq:exact-sequence-order-p}. With the convention
of \eqref{eq:p-tangent-connection-representation}, there is a bundle
isomorphism
\[
    S_p^\nabla:
    \tau^{(p)}M
    \longrightarrow
    \bigoplus_{k=1}^{p}\Sym^k(TM),
\]
under which an element $L\in\tau_x^{(p)}M$ is represented by
\[
    S_p^\nabla(L)
    =
    (A_1,\ldots,A_p),
    \qquad
    A_k\in\Sym^k(T_xM),
\]
through
\[
    Lf
    =
    \sum_{k=1}^{p}
    \frac1{k!}
    \left\langle
        \nabla^{(k)}f(x),A_k
    \right\rangle.
\]

Under this splitting, the exact sequence
\eqref{eq:exact-sequence-order-p} becomes the evident split exact
sequence
\[
\begin{tikzcd}[column sep=large,row sep=large]
0
\arrow[r]
&
\tau^{(p-1)}M
\arrow[r,"\iota_{p-1,p}"]
\arrow[d,"S_{p-1}^\nabla"']
&
\tau^{(p)}M
\arrow[r,"\sigma_p"]
\arrow[d,"S_p^\nabla"']
&
\Sym^p(TM)
\arrow[r]
\arrow[d,equal]
&
0
\\
0
\arrow[r]
&
\displaystyle\bigoplus_{k=1}^{p-1}\Sym^k(TM)
\arrow[r,"j_{<p}"]
&
\displaystyle\bigoplus_{k=1}^{p}\Sym^k(TM)
\arrow[r,"\tfrac{1}{p!}\operatorname{pr}_p"]
&
\Sym^p(TM)
\arrow[r]
&
0 ,
\end{tikzcd}
\]
where
\[
    j_{<p}(A_1,\ldots,A_{p-1})
    =
    (A_1,\ldots,A_{p-1},0),\qquad {\rm and}\quad
    \operatorname{pr}_p(A_1,\ldots,A_p)=A_p.
\]

The upper row is intrinsic, whereas the vertical identifications
depend on the choice of connection. In particular, a connection
provides a noncanonical decomposition
\[
    \tau^{(p)}M
    \simeq
    \tau^{(p-1)}M\oplus\Sym^p(TM),
\]
and, by iteration,
\[
    \tau^{(p)}M
    \simeq
    \bigoplus_{k=1}^{p}\Sym^k(TM).
\]

Consequently, the highest-order component of an order-$p$ tangent
vector is intrinsic, while its realization as one component of the
direct sum above depends on $\nabla$. For the order-$p$ differential
associated with a path $X$, this highest-order component is represented
by its $p$-th variation tensor $d[X]_\pi^{(p)}$, up to a normalization.}
\end{remark}
The transfer principle identifies the completed change-of-variable
functional as an intrinsic object. We now make this statement
quantitative by deriving explicit transformation formulas for its
connection-dependent compensated component.
\subsection{Transformation under smooth maps and change of connection}
\label{subsec:connection-transformation}

The connection-independence statement in Proposition~\ref{prop:transfer-principle} may be
strengthened to an explicit transformation rule for the compensated
integral. For an affine connection $\nabla$ on $M$, write
\[
 I_{\pi,X}^{\nabla,p}(f)
 :=
 \int_0^T
 \left\langle
 T_\nabla^{p-1}f(X_t),d_\pi^{p-1}X_t
 \right\rangle .
\]
Thus, by Theorem~\ref{thm:change-of-variable},
\begin{equation}
 I_{\pi,X}^{\nabla,p}(f)
 =
 f(X_T)-f(X_0)
 -
 \frac{1}{p!}
 \int_0^T
 \left\langle
 \nabla^{(p)}f(X_t),d[X]_{\pi,t}^{(p)}
 \right\rangle .
 \label{eq:integral-endpoint-representation}
\end{equation}

\begin{proposition}[Transformation of the compensated integral]
\label{prop:connection-transformation}
Let $p\geq2$, let $X\in V_p(M,\pi)$, and let
$f\in C^p(M,\mathbb R)$.
\begin{enumerate}
\item
Let $\nabla$ and $\widetilde\nabla$ be two affine connections on $M$.
Then
\begin{equation}
\begin{split}
 I_{\pi,X}^{\widetilde\nabla,p}(f)
 -
 I_{\pi,X}^{\nabla,p}(f)
 &=
 \frac{1}{p!}
 \int_0^T
 \left\langle
 \nabla^{(p)}f(X_t)
 -
 \widetilde\nabla^{(p)}f(X_t),
 d[X]_{\pi,t}^{(p)}
 \right\rangle .
\end{split}
\label{eq:connection-change-integral}
\end{equation}
Equivalently, the following functional is invariant under change of connection:
\[
 D_{\pi,X}^{(p),\nabla}(f)
 :=
 I_{\pi,X}^{\nabla,p}(f)
 +
 \frac{1}{p!}
 \int_0^T
 \left\langle
 \nabla^{(p)}f(X_t),d[X]_{\pi,t}^{(p)}
 \right\rangle.
\]
\item
More generally, let $(N,\nabla^N)$ be another manifold with an
affine connection, let $\nabla^M$ be an affine connection on $M$, and
let $\Phi:M\to N$ be smooth. For $f\in C^p(N,\mathbb R)$,
\begin{equation}
\begin{split}
&
 I_{\pi,\Phi\circ X}^{\nabla^N,p}(f)
 -
 I_{\pi,X}^{\nabla^M,p}(f\circ\Phi)
\\
&\quad =
 \frac{1}{p!}
 \int_0^T
 \left\langle
 (\nabla^M)^{(p)}(f\circ\Phi)(X_t)
 -
 (D\Phi_{X_t})^{*\otimes p}
 (\nabla^N)^{(p)}f(\Phi(X_t)),
 d[X]_{\pi,t}^{(p)}
 \right\rangle .
\end{split}
\label{eq:smooth-map-transformation}
\end{equation}
Consequently,
\begin{equation}
 D_{\pi,\Phi\circ X}^{(p),\nabla^N}(f)
 =
 D_{\pi,X}^{(p),\nabla^M}(f\circ\Phi).
 \label{eq:completed-functional-naturality}
\end{equation}
\end{enumerate}
\end{proposition}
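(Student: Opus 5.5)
The plan is to derive both parts from the endpoint representation \eqref{eq:integral-endpoint-representation}, which follows from Theorem~\ref{thm:change-of-variable}. Since the left-hand side $f(X_T)-f(X_0)$ is intrinsic, every difference of compensated integrals reduces to a difference of the $p$-th variation terms.

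For part~1, apply Theorem~\ref{thm:change-of-variable} twice to the same $f$ and $X$: once with $\nabla$ and once with $\widetilde\nabla$. The $p$-th variation tensor $[X]_\pi^{(p)}$ does not depend on the connection. Indeed, by Proposition~\ref{prop:independence-e}, the tensor measures built from $e_\nabla$ and $e_{\widetilde\nabla}$, which are both admissible increment maps, have the same weak-* limit and the same finiteness of $p$-energy. So $X\in V_p(M,\pi)$ has a single tensor $[X]_\pi^{(p)}$, and both applications of \eqref{eq:integral-endpoint-representation} involve it. Subtracting the two identities cancels $f(X_T)-f(X_0)$ and gives \eqref{eq:connection-change-integral} directly. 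Invariance of $D_{\pi,X}^{(p),\nabla}(f)$ is the same statement rearranged: by the theorem, $D_{\pi,X}^{(p),\nabla}(f)=f(X_T)-f(X_0)$ for every $\nabla$.

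For part~2, set $Y=\Phi\circ X$. By Proposition~\ref{prop:smooth-map-transformation}, $Y\in V_p(N,\pi)$ and $d[Y]_{\pi,t}^{(p)}=(D\Phi_{X_t})^{\otimes p}d[X]_{\pi,t}^{(p)}$. Apply Theorem~\ref{thm:change-of-variable} on $N$ to $f$ and $Y$ with $\nabla^N$. Then rewrite the $p$-th variation term using the duality characterization \eqref{eq:fiberwise-pushforward-duality}, with $A_t=(\nabla^N)^{(p)}f(Y_t)$, which is a continuous section of $Y^*\Sym^p(T^*N)$ because $f\in C^p$. This gives
\[
\int_0^T\bigl\langle(\nabla^N)^{(p)}f(Y_t),d[Y]^{(p)}_{\pi,t}\bigr\rangle=\int_0^T\bigl\langle(D\Phi_{X_t})^{*\otimes p}(\nabla^N)^{(p)}f(Y_t),d[X]^{(p)}_{\pi,t}\bigr\rangle .
\]
Next apply Theorem~\ref{thm:change-of-variable} on $M$ to $f\circ\Phi$, which lies in $C^p(M)$ since $\Phi$ is smooth, together with $X$ and $\nabla^M$. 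Both endpoint terms equal $f(Y_T)-f(Y_0)=(f\circ\Phi)(X_T)-(f\circ\Phi)(X_0)$, so subtracting the two representations yields \eqref{eq:smooth-map-transformation}. The naturality \eqref{eq:completed-functional-naturality} follows because both completed functionals equal this common endpoint increment.

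There is no real analytic obstacle, since all limits were already established. The only points needing care are bookkeeping ones. First, in part~1 one must record explicitly that a single tensor $[X]_\pi^{(p)}$ serves both connections, which is exactly what Proposition~\ref{prop:independence-e} provides. Second, in part~2 one must check that the pullback pairing is the correct dual map, so that the integrand $(D\Phi)^{*\otimes p}(\nabla^N)^{(p)}f$ is paired against $d[X]^{(p)}$ rather than against $d[Y]^{(p)}$.
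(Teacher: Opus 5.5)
Your proposal is correct and follows the paper's proof. In part~1 both arguments apply Theorem~\ref{thm:change-of-variable} with each connection, use connection-independence of $[X]_\pi^{(p)}$ from Proposition~\ref{prop:independence-e}, and subtract. In part~2 both use the pushforward rule of Proposition~\ref{prop:smooth-map-transformation} together with the duality pairing, then subtract the two endpoint representations, whose endpoint terms coincide.
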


\begin{proof}
Applying Theorem~\ref{thm:change-of-variable} with the connection $\nabla$ gives
\[
 f(X_T)-f(X_0)
 =
 I_{\pi,X}^{\nabla,p}(f)
 +
 \frac{1}{p!}
 \int_0^T
 \left\langle
 \nabla^{(p)}f(X_t),d[X]_{\pi,t}^{(p)}
 \right\rangle .
\]
Applying the same theorem with $\widetilde\nabla$ gives
\[
 f(X_T)-f(X_0)
 =
 I_{\pi,X}^{\widetilde\nabla,p}(f)
 +
 \frac{1}{p!}
 \int_0^T
 \left\langle
 \widetilde\nabla^{(p)}f(X_t),
 d[X]_{\pi,t}^{(p)}
 \right\rangle .
\]
The left-hand sides are identical, while the variation tensor
$[X]_\pi^{(p)}$ is independent of the choice of connection by
Proposition~2.5. Subtracting the first identity from the second
therefore yields
\[
 I_{\pi,X}^{\widetilde\nabla,p}(f)
 -
 I_{\pi,X}^{\nabla,p}(f)
 =
 \frac{1}{p!}
 \int_0^T
 \left\langle
 \nabla^{(p)}f
 -
 \widetilde\nabla^{(p)}f,
 d[X]_\pi^{(p)}
 \right\rangle ,
\]
which proves~\eqref{eq:connection-change-integral}. Rearranging this
identity proves the connection-independence of
$D_{\pi,X}^{(p),\nabla}$.

We next prove~\eqref{eq:smooth-map-transformation}. Let
 $Y:=\Phi\circ X.$
By Proposition~\ref{prop:smooth-map-transformation} $Y\in V_p(N,\pi)$ and
\begin{equation}
 d[Y]_{\pi,t}^{(p)}
 =
 (D\Phi_{X_t})^{\otimes p}d[X]_{\pi,t}^{(p)}.
 \label{eq:variation-map-rule-used}
\end{equation}
The change-of-variable formula on $N$ gives
\[
 I_{\pi,Y}^{\nabla^N,p}(f)
 =
 f(Y_T)-f(Y_0)
 -
 \frac{1}{p!}
 \int_0^T
 \left\langle
 (\nabla^N)^{(p)}f(Y_t),d[Y]_{\pi,t}^{(p)}
 \right\rangle .
\]
Using~\eqref{eq:variation-map-rule-used} and the duality between
$(D\Phi)^{\otimes p}$ and $(D\Phi)^{*\otimes p}$, the correction term
can be rewritten as
\[
\begin{split}
&
 \int_0^T
 \left\langle
 (\nabla^N)^{(p)}f(Y_t),d[Y]_{\pi,t}^{(p)}
 \right\rangle
 =
 \int_0^T
 \left\langle
 (D\Phi_{X_t})^{*\otimes p}
 (\nabla^N)^{(p)}f(\Phi(X_t)),
 d[X]_{\pi,t}^{(p)}
 \right\rangle .
\end{split}
\]
On the other hand, applying Theorem~\ref{thm:change-of-variable} on $M$ to $f\circ\Phi$
gives
\[
\begin{split}
 I_{\pi,X}^{\nabla^M,p}(f\circ\Phi)
 &=
 f(\Phi(X_T))-f(\Phi(X_0))
\\
&\quad -
 \frac{1}{p!}
 \int_0^T
 \left\langle
 (\nabla^M)^{(p)}(f\circ\Phi)(X_t),
 d[X]_{\pi,t}^{(p)}
 \right\rangle .
\end{split}
\]
The endpoint terms in these two identities coincide. Their
subtraction gives~\eqref{eq:smooth-map-transformation}.

Finally, adding the corresponding highest-order correction to each
side of~\eqref{eq:smooth-map-transformation} yields
\eqref{eq:completed-functional-naturality}. Equivalently, both sides
of~\eqref{eq:completed-functional-naturality} are equal to
\[
 f(\Phi(X_T))-f(\Phi(X_0)).
\]
\end{proof}
In the case \(p=2\), 
formula~\eqref{eq:connection-change-integral} takes a
particularly explicit form. Let
\[
 C(U,V):=\widetilde\nabla_UV-\nabla_UV.
\]
Then
\[
 \widetilde\nabla^{(2)}f(U,V)
 =
 \nabla^{(2)}f(U,V)
 -
 df\bigl(\operatorname{Sym}C(U,V)\bigr),
\]
and hence
\[
 I_{\pi,X}^{\widetilde\nabla,2}(f)
 -
 I_{\pi,X}^{\nabla,2}(f)
 =
 \frac12
 \int_0^T
 \left\langle
 df\circ\operatorname{Sym}C,
 d[X]_{\pi,t}^{(2)}
 \right\rangle .
\]
Thus the change in the first-order It\^o integral is exactly
compensated by the change in the covariant Hessian term.
\section{Examples}\label{sec.examples}
We now present some examples of applications of the change of variable formula to irregular paths and processes on manifolds. 
\subsection{Quadratic variation of Riemannian Brownian motion}
The case $p=2$ connects our pathwise construction with It\^o's original construction \cite{ito1962} and the
second-order stochastic differential geometry of Schwartz \cite{schwartz1982,schwartz1984} and
Émery \cite{emery}.
Let $(M,g)$ be a $d$-dimensional Riemannian manifold and let $
    B=(B_t)_{t\geq0}$
be Brownian motion on $M$, with generator
$ \frac12\Delta_g.$
For $f,h\in C^\infty(M)$, the quadratic covariation satisfies
\begin{equation}
    d[f(B),h(B)]_t
    =
    \langle df,dh\rangle_{g^{-1}}(B_t)\,dt.
    \label{eq:brownian-covariation-functions}
\end{equation}
Equivalently, the intrinsic quadratic variation tensor of $B$ is
\begin{equation}
    d[B]^{(2)}_t   =
    g^{-1}_{B_t}\,dt,
\qquad {\rm where}\quad g^{-1}_x\in\Sym^2(T_xM)
    \label{eq:brownian-qv-tensor}
\end{equation}
denotes the cometric \cite{emery}.
Indeed, contracting \eqref{eq:brownian-qv-tensor} with
$df\otimes dh$ gives
\[
    (df\otimes dh)
    \left(g^{-1}\right)
    =
    \langle df,dh\rangle_{g^{-1}},
\]
which is precisely \eqref{eq:brownian-covariation-functions}.
Consequently,
\[
    \frac12
    \left\langle
        \nabla^{(2)}f,
        d[B]^{(2)}
    \right\rangle
    =
    \frac12
    \operatorname{tr}_g(\nabla^{(2)}f)(B_t)\,dt.
\]
For the Levi--Civita connection,
\[
    \operatorname{tr}_g(\nabla^{(2)}f)
    =
    \Delta_g f,
\]
and therefore the change of variable formula for $p=2$ becomes
\begin{equation}
    f(B_t)-f(B_0)
    =
    \int_0^t
    df(B_s)\,d^\nabla B_s
    +
    \frac12
    \int_0^t
    \Delta_g f(B_s)\,ds,
    \label{eq:brownian-ito-manifold}
\end{equation}
along any sequence of partitions for which the quadratic variation is
realized pathwise.
Thus the classical It\^o correction is precisely the contraction of
the second covariant derivative of $f$ with the second variation
tensor. In the terminology of
Section~\ref{sec:transfer-principle}, the principal symbol of the
second-order differential associated with Brownian motion is $\tfrac12 g^{-1}$.
\subsection{Fractional Brownian motion lifted to a manifold}
\label{sec:example-fbm-exp}
We now give a class of  examples with nontrivial higher-order variation.
We start with a vector process with non-zero $p-$th variation tensor and lift it to a manifold via the exponential map.

The following proposition, whose proof is given in Appendix \ref{sec.FBM}, gives the almost-sure $p-$th variation tensor of fractional Brownian motion for $p=1/H$, extending a result by \cite{pratelli2006}:
\begin{proposition}[Variation tensor of fractional Brownian motion]
\label{prop:fBm-p-variation}
Let $p\geq 2, 
    H=\frac1p$ and $X^H=(\beta^1,\ldots,\beta^d)$
where
$\beta^1,\ldots,\beta^d$  independent standard real-valued fractional
Brownian motions on a probability space $(\Omega,{\cal F},\mathbb{P})$ with Hurst exponent $H=\frac1p.$
Along the sequence of uniform partitions
$
    \pi_n
    =
    \left\{
        \frac{iT}{n}:0\leq i\leq n
    \right\},$
we have
\[
  \mathbb{P}\left(  X^H\in V_p(\mathbb R^d,\pi)\ \right)=1.
\]
with $p-$th variation tensor 
\[
    d[X^H]_{\pi,t}^{(p)}
    =
    \mathfrak m_p\,dt,
    \qquad
    \mathfrak m_p
    :=
    \mathbb E[Z^{\otimes p}]
    \in {\rm Sym}^p(\mathbb R^d),\qquad Z\sim N(0,I_d).
\]
\end{proposition}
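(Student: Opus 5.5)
We have to show two things almost surely: the $p$-energy along the uniform partitions is bounded, and the tensor measures $\mu_n=\sum_i (\Delta_i^n X^H)^{\otimes p}\delta_{t_i^n}$ converge weakly-* to $\mathfrak m_p\,dt$. On $\mathbb R^d$ with the flat connection the increment map is $e(x,y)=y-x$. By Proposition~\ref{prop:independence-e} no other choice of increment is needed. The plan is to reduce everything to scalar ergodic-type limits for functionals of the stationary Gaussian increment sequence.

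\textbf{Reduction to rescaled stationary increments.} By self-similarity, $n^{H}\Delta_i^nX^H\,T^{-H}$ has the law of the increments $\zeta_i=X^H_{i+1}-X^H_i$ of a unit-step fBm. Each component is a stationary Gaussian sequence of variance one. Since $H=1/p$ we have $|\Delta_i^n X^H|^p = (T/n)\,|\zeta^{(n)}_i|^p$, where $\zeta^{(n)}_i$ are the rescaled increments. For a polynomial test function $\varphi$ we therefore get
\[
\int\varphi\,d\mu_n=\frac Tn\sum_{i<n}\varphi(t_i^n)\,(\zeta^{(n)}_i)^{\otimes p}.
\]
So it suffices to prove almost-sure Riemann-type laws of large numbers. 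Concretely, for every fixed interval $[a,b]$ with rational endpoints and every multi-index $\alpha$ with $|\alpha|=p$,
\[
\frac Tn\sum_{t_i^n\in[a,b)}\prod_j (\zeta^{(n),j}_i)^{\alpha_j}\to (b-a)\,\mathbb E\Big[\prod_j Z_j^{\alpha_j}\Big],
\]
together with the same statement for $|\zeta|^p$. Independence of the coordinates identifies the limit tensor as $\mathfrak m_p$. Once these interval limits hold on a single full-measure event for all rational intervals, the boundedness of $\nu_n([0,T])$ and a standard approximation of continuous $A$ by step functions give \eqref{eq:def-p-variation}. This is the argument used in Lemma~\ref{lem:uniform-localization}: $\nu_n$ converges to a multiple of Lebesgue measure, which is atomless.

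\textbf{The main step: almost-sure convergence.} For a fixed monomial $F(\zeta)=\prod_j\zeta_j^{\alpha_j}$, I would expand $F-\mathbb E F$ in Hermite polynomials of the components. I would then bound the variance of $S_n=\frac1n\sum_{i<n}(F(\zeta_i^{(n)})-\mathbb EF)$ using the covariance $\rho(k)\sim H(2H-1)k^{2H-2}$ of fBm increments and the diagram formula. Each Hermite chaos of order $q\ge1$ contributes at most
\[
\frac1{n^2}\sum_{i,j}|\rho(i-j)|^q\lesssim n^{-1}+n^{2(2H-2)q/2}\cdots
\]
Since $H=1/p\le 1/2$, we have $\sum_k|\rho(k)|<\infty$, which gives $\mathrm{Var}(S_n)=O(1/n)$. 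This is the Breuer--Major regime, with no long memory. Hypercontractivity of finite chaos then upgrades the variance bound to $\mathbb E|S_n|^{2r}=O(n^{-r})$ for every $r$. Taking $r=2$ and applying Borel--Cantelli over all $n$ gives almost-sure convergence. No subsequence is needed because $\sum n^{-2}<\infty$. The same bounds hold for sums restricted to $[a,b)$. The countably many rational intervals and monomials are then handled by intersecting full-measure events. The same argument applied to $|\zeta|^p$ (an $L^2$ functional with finite chaos expansion only when $p$ is even) requires a slight extension. For general $p$, I would use the Hermite expansion in $L^2$ with hypercontractivity truncated at a large chaos level, plus an $L^2$ tail bound. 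This is the step I expect to be most delicate. Alternatively, domination suffices: $|\zeta|^p\le C\sum_j|\zeta_j|^p$, and Pratelli's scalar result \cite{pratelli2006} gives almost-sure convergence of $\frac1n\sum|\zeta^{(n),j}_i|^p$. That is enough for finite $p$-energy and atomlessness.
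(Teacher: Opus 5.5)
Your proposal is correct, and its core follows the paper's proof: Hermite expansion of the centred polynomial functional, absolute summability of $\rho_H$ because $H=1/p\leq 1/2$, a variance bound of order $1/n$, Nelson hypercontractivity giving fourth moments of order $n^{-2}$, Borel--Cantelli over all $n$, and finally extension from a countable family by density using the $p$-energy bound. The paper instead uses a basis $A_1,\dots,A_m$ of $\Sym^p((\mathbb R^d)^*)$ weighted by $\varphi$ from a countable dense subset of $C([0,T])$, whereas you test monomials against indicators of rational intervals. That difference is cosmetic.

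The one real divergence is the $p$-energy bound. You flag it as the delicate step and settle it by citing Pratelli. The paper needs neither truncation nor an external result. Only $\sup_n\sum_i|\Delta_i^nX^H|^p<\infty$ is required, so it runs the same polynomial argument for $G(z)=|z|^{2p}=(\sum_j z_j^2)^p$. It then concludes with $\frac1N\sum_i|Z_i^N|^p\le(\frac1N\sum_i|Z_i^N|^{2p})^{1/2}$. This makes the step self-contained.

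Your truncation route would not close as stated. The infinite chaos tail of $|z|^p$ for odd $p$ has only an $L^2$ bound of order $1/n$, which is not summable in $n$, and hypercontractivity is unavailable for it. So Borel--Cantelli gives no almost-sure control there.

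Your appeal to Lemma~\ref{lem:uniform-localization} and to convergence of $\nu_n$ to a multiple of Lebesgue measure is superfluous. For odd $p$, your domination argument does not even establish that convergence, since it only bounds $\nu_n$. Uniform approximation of a continuous $A$ by rational step functions uses only $\sup_n\nu_n([0,T])<\infty$, which your argument does provide.
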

Now let $(M,g)$ be a complete $d$-dimensional Riemannian manifold, fix $o\in M$, and
identify
$    V:=T_oM$
with $\mathbb{R}^d$ endowed with the scalar product $g_o$.
Let $X^H$ be the process defined in Proposition \ref{prop:fBm-p-variation}
with
$H=\frac1p,$ where $p\in\mathbb N, p\geq2.$
Thus $X^H$ is a centered Gaussian process satisfying
\[
    \mathbb E
    \big[
      \alpha(X_t^H)\beta(X_s^H)
    \big]
    =
    R_H(s,t)
    \langle\alpha,\beta\rangle_{g_o^{-1}},
    \qquad
    \alpha,\beta\in V^*,
\]
where
\[
    R_H(s,t)
    =
    \frac12
    \left(
       s^{2H}+t^{2H}-|t-s|^{2H}
    \right).
\]

Consider the uniform partitions
\[
    \pi_n
    =
    \left\{
       t_i^n=\frac{iT}{n}:0\leq i\leq n
    \right\}.
\]
Now define the $M$-valued continuous path
$    X_t
    =
    \exp_o(X_t^H).$
Since $M$ is complete, $\exp_o$ is defined on all of $T_oM$.
By Proposition \ref{prop:smooth-map-transformation},
\begin{equation}
    d[\Phi(Y)]_{\pi,t}^{(p)}
    =
    (D\Phi_{Y_t})^{\otimes p}
    d[Y]_{\pi,t}^{(p)},
    \label{eq:variation-transformation-example}
\end{equation}
we obtain, with $\Phi=\exp_o$,
\begin{equation}
\boxed{
    d[X]_{\pi,t}^{(p)}
    =
    \left(
       D\exp_o\big|_{X_t^H}
    \right)^{\otimes p}
    \mathfrak m_p\,dt .
}
    \label{eq:exp-fbm-pvariation}
\end{equation}
Thus the $p$-th variation tensor of $X$ is absolutely continuous  with Lebesgue density
\[
    \left(
       D\exp_o\big|_{X_t^H}
    \right)^{\otimes p}
    \mathfrak m_p
    \in
    \Sym^p(T_{X_t}M).
\]
Unlike the Euclidean variation tensor $\mathfrak m_p$, this density
depends on the position of the path through the differential of the
exponential map and therefore reflects the geometry of $M$.
\begin{remark}[The exponential map and the transfer principle]
\label{rem:exponential-transfer}
{\em The process $ X=\exp_o(B^H)$
provides a concrete illustration of the transfer principle of
Section~\ref{sec:transfer-principle}. Let
\[
    \Phi=\exp_o:T_oM\longrightarrow M.
\]
By Proposition~\ref{prop:transfer-principle}, the functional
$\mathcal D_{\pi,X}^{(p),\nabla}$ is independent of the choice of
connection. We may therefore write simply
$ \mathcal D_{\pi,X}^{(p)}(f)$
for its intrinsic value. For every $f\in C^p(M)$,
\[
    \mathcal D_{\pi,\Phi\circ B^H}^{(p)}(f)
    =
    \mathcal D_{\pi,B^H}^{(p)}(f\circ\Phi).
\]
Indeed, both sides are equal to
\[
    f\bigl(\Phi(B_T^H)\bigr)
    -
    f\bigl(\Phi(B_0^H)\bigr).
\]
Thus smooth maps act naturally on the intrinsic reduced-$p$-jet
functional, even though its decomposition into lower- and
highest-order terms depends on the chosen connection.
At the level of the canonical highest-order component, this
naturality is precisely the transformation rule of
Proposition~\ref{prop:smooth-map-transformation}:
\[
    d[X]_{\pi,t}^{(p)}
    =
    \left(
        D\exp_o\big|_{B_t^H}
    \right)^{\otimes p}
    d[B^H]_{\pi,t}^{(p)}.
\]
For fractional Brownian motion with $H=1/p$, Proposition~\ref{prop:fBm-p-variation}
therefore gives
\[
    d[X]_{\pi,t}^{(p)}
    =
    \left(
        D\exp_o\big|_{B_t^H}
    \right)^{\otimes p}
    \mathfrak m_p\,dt.
\]
In the connection splitting of higher-order tangent geometry, this
tensor measure is the canonical highest-order component of the
intrinsic jet functional. The lower-order compensated term is not
obtained by applying $D\exp_o$ alone: it incorporates the higher
derivatives of the exponential map through the covariant
$(p-1)$-jet.}
\end{remark}
\subsection{A fractal path with non-zero cubic variation on the sphere}
The Gaussian examples considered above have vanishing odd-order
variation tensor, by symmetry. Non-trivial odd-order variation does,
however, occur naturally for deterministic fractal paths. We give
here an explicit example based on a construction
by Schied and Zhang \cite{SchiedZhang}.

Let $\varphi:\mathbb R\to\mathbb R$ be the 1-periodic extension of
\[
    \varphi(t)
    =
    \begin{cases}
        \dfrac{3}{2}t,
        & 0\leq t\leq \dfrac13,\\[1ex]
        \dfrac{3}{4}(1-t),
        & \dfrac13\leq t\leq1,
    \end{cases}
\]
and define
\[
    w(t)
    :=
    \sum_{m=0}^\infty
    3^{-m/3}\varphi(3^m t),
    \qquad 0\leq t\leq1.
\]
Consider the sequence of ternary partitions
\[
    \pi_n
    :=
    \left\{
        \frac{k}{3^n}:0\leq k\leq3^n
    \right\}.
\]
Schied and Zhang \cite[Theorem~3.4 and Example~3.6]{SchiedZhang}
show that $w$ has non-zero signed cubic variation along $\pi$, with
\[
    \lim_{n\to\infty}
    \sum_{k<3^nt}
    \left(
        w\left(\frac{k+1}{3^n}\right)
        -
        w\left(\frac{k}{3^n}\right)
    \right)^3
    =
    \frac{27}{256}\,t,
    \qquad 0\leq t\leq1.
\]
The absolute cubic variation is finite and non-zero. Hence, in the
notation of Section~\ref{sec.pthvariation},
\[
    w\in V_3(\mathbb R,\pi),
    \qquad
    d[w]_{\pi,t}^{(3)}
    =
    \frac{27}{256}\,dt.
\]
\paragraph{A fractal path on the sphere.}
Let $S^2\subset\mathbb R^3$ be the unit sphere with its round metric,
\[
    o=e_3=(0,0,1),
    \qquad
    v=e_1=(1,0,0)\in T_oS^2,
\]
and lift the preceding path by the Riemannian exponential map:
\[
    X_t
    :=
    \exp_o(w(t)v)
    =
    \sin(w(t))\,e_1+\cos(w(t))\,e_3.
\]
We have
\[
    D\exp_o\big|_{w(t)v}(v)=U_t =
    \cos(w(t))\,e_1-\sin(w(t))\,e_3
    \in T_{X_t}S^2,
\]
The transformation rule of Proposition~\ref{prop:smooth-map-transformation} therefore yields
\[
    d[X]_{\pi,t}^{(3)}
    =
    \frac{27}{256}\,
    U_t^{\otimes3}\,dt.
\]
So $X\in V_3(S^2,\pi)$ is a path  on $S^2$ with non-zero cubic variation tensor.
Consider now the  test function
$F:S^2\longrightarrow\mathbb R$ defined by
  $F(x)=\langle v,x\rangle^3=x_1^3. $
Along $X$, we have  $F(X_t)=\sin^3(w(t)).$
Since $ r\longmapsto
    \gamma(r):=\exp_o(rv)$
is a unit-speed geodesic and $\dot\gamma(w(t))=U_t,$
we obtain
\[
    \nabla^{(3)}F(X_t)
    \bigl(U_t,U_t,U_t\bigr)
    =
    \left.
    \frac{d^3}{dr^3}\sin^3 r
    \right|_{r=w(t)}.
\]
Using
\[
    \frac{d^3}{dr^3}\sin^3 r
    =
    \frac{27\cos(3r)-3\cos r}{4},
\]
the highest-order term in the cubic change-of-variable formula is
therefore
\[
\begin{aligned}
    \frac1{3!}
    \int_0^t
    \left\langle
        \nabla^{(3)}F(X_s),
        d[X]_{\pi,s}^{(3)}
    \right\rangle
    &=
    \frac{27}{6\cdot256}
    \int_0^t
    \nabla^{(3)}F(X_s)
    (U_s,U_s,U_s)\,ds
    \\
    &=
    \frac{27}{2048}
    \int_0^t
    \bigl(
        9\cos(3w(s))-\cos(w(s))
    \bigr)\,ds.
\end{aligned}
\]
Consequently, Theorem~\ref{thm:change-of-variable} gives
\[
\boxed{
    \sin^3(w(t))
    =
    \int_0^t
    \left\langle
        T_\nabla^2F(X_s),
        d_\pi^2X_s
    \right\rangle
    +
    \frac{27}{2048}
    \int_0^t
    \bigl(
        9\cos(3w(s))-\cos(w(s))
    \bigr)\,ds
}
\]
since $w(0)=0$. 
In this example the lower-order integral  can be made completely
explicit.  Write
\[
    \gamma(r):=\exp_o(rv)
    =\sin r\,e_1+\cos r\,e_3,
    \qquad
    X_t=\gamma(w(t)),\qquad 
    U_t:=\dot\gamma(w(t)).
\]
Since $w$ is continuous, for all sufficiently fine partitions the
increments $\Delta_i w:=w(t_{i+1})-w(t_i)$ are smaller than the
injectivity radius along the great circle. Hence the geodesic joining
$X_{t_i}$ to $X_{t_{i+1}}$ is the corresponding segment of $\gamma$,
and therefore
\[
    e_\nabla(X_{t_i},X_{t_{i+1}})
    =
    (\Delta_i w)\,U_{t_i}.
\]
For
\[
    F(x)=x_1^3,
    \qquad
    h(r):=F(\gamma(r))=\sin^3 r,
\]
differentiation along the geodesic gives
\[
    dF_{X_t}(U_t)=h'(w(t)),
    \qquad
    \nabla^{(2)}F_{X_t}(U_t,U_t)=h''(w(t)),
\]
and
\[
    \nabla^{(3)}F_{X_t}(U_t,U_t,U_t)=h'''(w(t)).
\]
Consequently the compensated integral is precisely the limit of the
single sequence of sums
\[
\begin{aligned}
    C_n(t)
    &:=
    \sum_{t_i<t}
    \left[
        h'(w(t_i))\,\Delta_i w
        +
        \frac12 h''(w(t_i))(\Delta_iw)^2
    \right]
    \\
    &=
    \sum_{t_i<t}
    \left[
        dF_{X_{t_i}}
        \bigl(e_\nabla(X_{t_i},X_{t_{i+1}})\bigr)
        +
        \frac12
        \nabla^{(2)}F_{X_{t_i}}
        \bigl(
        e_\nabla(X_{t_i},X_{t_{i+1}})^{\otimes2}
        \bigr)
    \right].
\end{aligned}
\]
It is important that the two terms in this sum are compensated
\emph{before} taking the limit; they need not define convergent
integrals separately.

Indeed, Taylor's formula gives
\[
\begin{aligned}
    h(w(t_{i+1}))-h(w(t_i))
    &=
    h'(w(t_i))\,\Delta_iw
    +
    \frac12h''(w(t_i))(\Delta_iw)^2
   +
    \frac16h'''(w(t_i))(\Delta_iw)^3
    +r_i^n,
\end{aligned}
\]
where, uniformly along the partitions,
\[
    \sum_i |r_i^n|\longrightarrow0.
\]
The latter follows from the Peano remainder and the bounded cubic
energy of $w$. Summing and using telescoping therefore yields
\[
    C_n(t)
    =
    h(w(t))-h(w(0))
    -
    \frac16
    \sum_{t_i<t}
        h'''(w(t_i))(\Delta_iw)^3
    +o(1).
\]
Since
\[
    d[w]_{\pi,s}^{(3)}
    =
    \frac{27}{256}\,ds,
\]
we obtain
\[
    \int_0^t
    \left\langle
        T_\nabla^2F(X_s),d_\pi^2X_s
    \right\rangle
    =
    \sin^3(w(t))
    -
    \frac{27}{6\cdot256}
    \int_0^t h'''(w(s))\,ds .
\]
As
\[
    h^{(3)}(r)
    =
    \frac{27\cos(3r)-3\cos r}{4},
\]
this becomes
\[
\boxed{
    \int_0^t
    \left\langle
        T_\nabla^2F(X_s),d_\pi^2X_s
    \right\rangle
    =
    \sin^3(w(t))
    -
    \frac{27}{2048}
    \int_0^t
    \bigl(9\cos(3w(s))-\cos(w(s))\bigr)\,ds .
}
\]
Thus in this example the first- and second-order Taylor terms compensate each other,
while the third-order term gives the
cubic-variation correction.
\subsection{Lifting fractional
Brownian paths to spaces of constant curvature}
\label{sec:examples-fbm-constant-curvature}
We apply the previous construction to two spaces of constant
sectional curvature, the unit sphere $\mathbb S^2$ and the hyperbolic
plane $\mathbb H^2$.

Let
$p=2m, m\in\mathbb N, H=\frac1p,$ 
and let $
    B_t^H=(\beta_t^1,\beta_t^2)_{t\in[0,T]}$
where $\beta_t^1,\beta_t^2$ are independent fractional Brownian motions with Hurst
parameter $H=1/p$, considered as a process in $T_oM$ after choosing an
orthonormal identification
\[
    (T_oM,g_o)\simeq\mathbb R^2.
\]
We consider the manifold-valued lift
$X_t=\exp_o(B_t^H)$.
Along the uniform sequence of partitions
\[
    \pi_n
    =
    \left\{
      0,\frac{T}{n},\ldots,T
    \right\},
\]
the Euclidean fractional Brownian motion has $p$-th variation tensor
\[
    d[B^H]_{\pi,t}^{(2m)}
    =
    \mathfrak m_p\,dt,\qquad \mathfrak m_{2m}
    =
    (2m-1)!!
    \Sym\left((g_o^{-1})^{\otimes m}\right).
\]
By the transformation rule for $p$-th variation tensors,
\begin{equation}
    d[X]_{\pi,t}^{(2m)}
    =
    (2m-1)!!\ 
    \Sym\left(Q_t^{\otimes m}\right)\,dt,
    \label{eq:constant-curvature-pvariation}
\end{equation}
where
\begin{equation}
    Q_t
    :=
    \left(
      D\exp_o\big|_{B_t^H}
    \right)^{\otimes2}g_o^{-1}
    \in\Sym^2(T_{X_t}M).
    \label{eq:def-Q-exp}
\end{equation}
The tensor $Q_t$ has a particularly simple expression on a surface
of constant curvature. Set
\[
    r_t:=|B_t^H|_{g_o}.
\]
For $r_t>0$, let $U_t$ denote the unit tangent vector at $X_t$ to the
radial geodesic from $o$ through $X_t$, and let $V_t$ be the unit
vector obtained by parallel transporting a unit vector orthogonal to
$B_t^H$ along this geodesic. Then
\begin{equation}
    Q_t
    =
    U_t^{\otimes2}
    +
    a_K(r_t)^2 V_t^{\otimes2},
    \label{eq:Q-constant-curvature}
\end{equation}
where
\[
    a_K(r)
    =
    \begin{cases}
        \displaystyle\frac{\sin r}{r},
        & K=1,\\[1.2ex]
        \displaystyle\frac{\sinh r}{r},
        & K=-1.
    \end{cases}
\]
The value at $r=0$ is defined by continuity.
The change of variable formula takes the form
\begin{equation}
\boxed{
\begin{split}
    f(X_T)-f(X_0)
    &=
    \int_0^T
    \left\langle
       T_\nabla^{2m-1}f(X_t),
       d_\pi^{\,2m-1}X_t
    \right\rangle
   +
    \frac1{2^m m!}
    \int_0^T
    \left\langle
    \nabla^{(2m)}f(X_t),
    Q_t^{\otimes m}
\right\rangle dt.
\end{split}}
\label{eq:constant-curvature-fbm-cov}
\end{equation}

In the orthonormal frame $(U_t,V_t)$,
\begin{equation}
\begin{split}
   \left\langle
    \nabla^{(2m)}f,
    Q_t^{\otimes m}
\right\rangle 
    =
    \sum_{j=0}^m
    {m\choose j}
    a_K(r_t)^{2(m-j)}
    \nabla^{(2m)}f
    \left(
       U_t^{\otimes2j},
       V_t^{\otimes2(m-j)}
    \right).
\end{split}
\label{eq:Q-trace-expanded}
\end{equation}
This displays explicitly the effect of curvature on the highest-order
term through the differential of the exponential map.

\subsubsection{Spherical harmonic along fractional Brownian paths on $\mathbb S^2$}
\label{sec:sphere-spherical-harmonic}

Let
$M=\mathbb S^2$
with its standard Riemannian metric and Levi--Civita connection.
We identify
$    T_o\mathbb S^2\simeq\mathbb R^2$ 
isometrically where $o=(0,0,1)$ is the north pole. Let $
    p=4, H=\frac14,$
and let
\[
   X_t=\exp_o(B_t^{1/4}),\qquad{\rm with}\quad  B^{1/4}_t=(\beta_t^1,\beta_t^2)
\]
where $\beta_t^i$ are independent  one-dimensional fractional Brownian motions with $H=1/4$. Define
\[
    r_t=|B_t^{1/4}|
    =
    \sqrt{(\beta_t^1)^2+(\beta_t^2)^2}.
\]
The exponential map of the unit sphere gives
\begin{equation}
    X_t
    =
    \left(
       \frac{\sin r_t}{r_t}\beta_t^1,
       \frac{\sin r_t}{r_t}\beta_t^2,
       \cos r_t
    \right),
    \label{eq:sphere-exp-fbm}
\end{equation}
with the usual continuous interpretation at $r_t=0$.
Along the  sequence of uniform partitions $\pi_n$, the fourth variation
tensor of $X$ is
\begin{equation}
    d[X]_{\pi,t}^{(4)}
    =
    3\,\Sym(Q_t\otimes Q_t)\,dt,
    \label{eq:sphere-fourth-variation}
\end{equation}
where
\begin{equation}
    Q_t
    =
    U_t^{\otimes2}
    +
    a_t^2 V_t^{\otimes2},
    \qquad
    a_t:=\frac{\sin r_t}{r_t},
    \label{eq:sphere-Q}
\end{equation}
and $(U_t,V_t)$ is the orthonormal radial--transverse frame along
$X_t$ determined by $B_t^{1/4}$.

\begin{figure}
    \centering
    \includegraphics[width=0.4\linewidth]{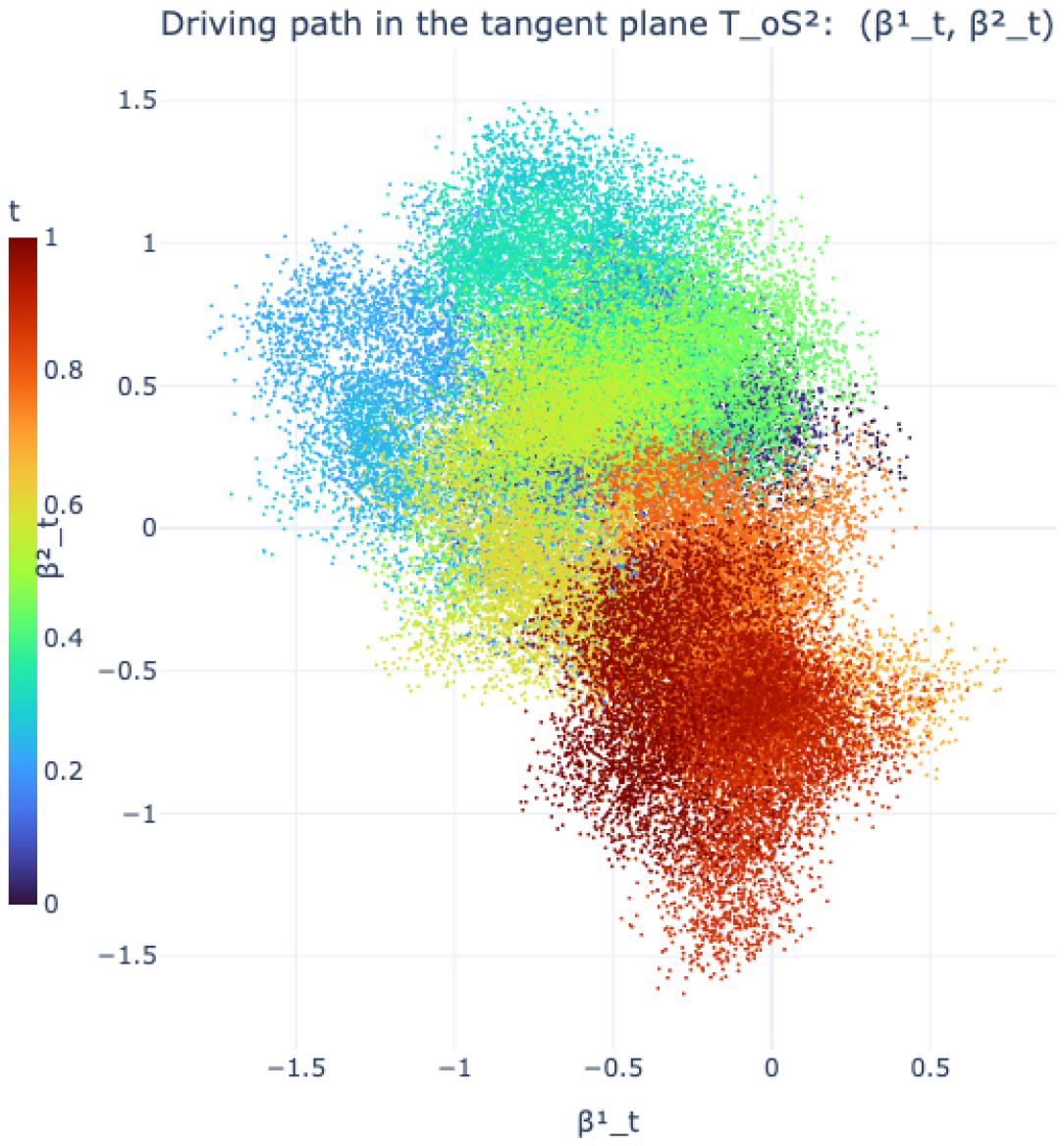}
     \includegraphics[width=0.5\linewidth]{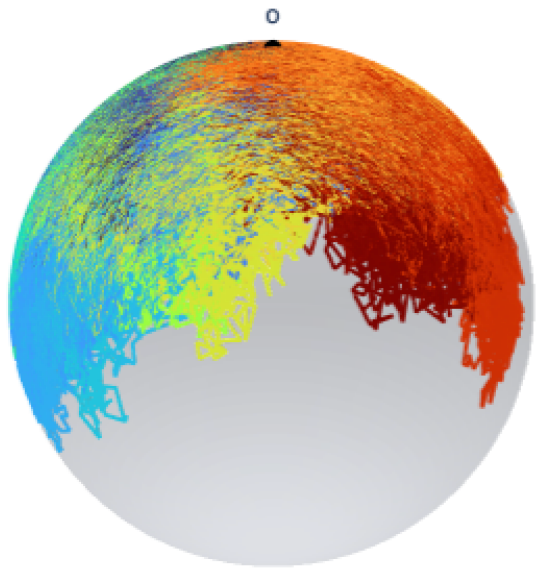}
    \caption{Left: path of $B^{\tfrac14}_t$ in the tangent plane $T_oS^2$.  Right: path of $X_t$ on the 2-sphere.}
    \label{fig:FBMS2}
\end{figure}
Let now
\[
    f(x)=x_3,
    \qquad x=(x_1,x_2,x_3)\in\mathbb S^2.
\]
This is, up to normalization, the degree-one zonal spherical harmonic
$Y_1^0$. In particular,
\[
    \Delta_{\mathbb S^2}f=-2f.
\]
From \eqref{eq:sphere-exp-fbm},
$ f(X_t)=\cos r_t.$
The covariant derivatives of $f$ take a particularly simple form.
Since $f$ is the restriction to $\mathbb S^2$ of a linear function on
$\mathbb R^3$,
\begin{equation}
    \nabla^{(2)}f=-fg.
    \label{eq:sphere-harmonic-hessian}
\end{equation}
Covariant differentiation and  symmetrization gives
\begin{equation}
    \nabla^{(3)}f
    =
    -\Sym(df\otimes g),\qquad
    \nabla^{(4)}f
    =
    f\,\Sym(g\otimes g).
    \label{eq:sphere-harmonic-fourth}
\end{equation}
Consequently the covariant third jet is
\begin{equation}
    T_\nabla^3f
    =
    \left(
       df,
       -\frac12 fg,
       -\frac16\Sym(df\otimes g)
    \right).
    \label{eq:sphere-harmonic-third-jet}
\end{equation}

The fourth-order change of variable formula therefore yields
\begin{equation}
\begin{split}
    \cos r_T-1
    &=
    \int_0^T
    \left\langle
       T_\nabla^3f(X_t),
       d_\pi^3X_t
    \right\rangle
  +
    \frac18
    \int_0^T
    \left\langle
       \nabla^{(4)}f(X_t),
       Q_t\otimes Q_t
    \right\rangle dt .
    \label{eq:sphere-harmonic-cov}
\end{split}
\end{equation}

The last term can be evaluated explicitly. With normalized
symmetrization,
\[
    \Sym(g\otimes g)(A,B,C,D)
    =
    \frac13
    \bigl(
       g(A,B)g(C,D)
       +
       g(A,C)g(B,D)
       +
       g(A,D)g(B,C)
    \bigr).
\]
It follows that, for every symmetric contravariant $2$-tensor $Q$,
\begin{equation}
    \left\langle
       \Sym(g\otimes g),
       Q\otimes Q
    \right\rangle
    =
    \frac13
    \left\{
       (\tr_g Q)^2
       +
       2\tr_g(Q^2)
    \right\}.
    \label{eq:sym-g2-Q2}
\end{equation}
Since the eigenvalues of $Q_t$ in the frame $(U_t,V_t)$ are
$1$ and $a_t^2$, we have
\[
    \tr_gQ_t=1+a_t^2,
    \qquad
    \tr_g(Q_t^2)=1+a_t^4.
\]
Thus
\begin{equation}
\begin{split}
    \left\langle
       \nabla^{(4)}f(X_t),
       Q_t\otimes Q_t
    \right\rangle
    &=
    f(X_t)
    \left(
       1+\frac23a_t^2+a_t^4
    \right)
=
    \cos r_t
    \left[
       1
       +
       \frac23
       \left(\frac{\sin r_t}{r_t}\right)^2
       +
       \left(\frac{\sin r_t}{r_t}\right)^4
    \right].\nonumber
\end{split}
\end{equation}
Hence the change of variable formula becomes
\begin{equation}
\boxed{
\begin{aligned}
    \cos r_T-1
    &=
    \int_0^T
    \left\langle
       T_\nabla^3f(X_t),
       d_\pi^3X_t
    \right\rangle
   +
    \frac18
    \int_0^T
    \cos r_t
    \left[
       1
       +
       \frac23
       \left(\frac{\sin r_t}{r_t}\right)^2
       +
       \left(\frac{\sin r_t}{r_t}\right)^4
    \right]dt .
\end{aligned}}
\label{eq:sphere-harmonic-explicit-cov}
\end{equation}
Thus, even for the degree-one spherical
harmonic, the fourth-order correction explicitly records the positive
curvature of $\mathbb S^2$.

\subsubsection{A fractional process in the hyperbolic plane $\mathbb H^2$}
Consider now the upper half-plane model
$    \mathbb H^2
    =
    \{(x_1,x_2)\in\mathbb R^2:x_2>0\}$
with metric
\[  g  =
    \frac1{x_2^2}
    \left(
      dx_1^2+dx_2^2
    \right).
\]
At $
    o=(0,1).$
 the Riemannian metric coincides with the Euclidean scalar
product, so $
    T_o\mathbb H^2\simeq\mathbb R^2$
isometrically. We have
\[
    B_t^H=(\beta_t^1,\beta_t^2),
    \qquad
    r_t
    =
    \sqrt{
       (\beta_t^1)^2+(\beta_t^2)^2
    },
\]
and, when $r_t>0$, put
\[
    c_t:=\frac{\beta_t^1}{r_t},
    \qquad
    s_t:=\frac{\beta_t^2}{r_t}.
\]
The exponential map at $o=(0,1)$ is given explicitly by
\begin{equation}
    \exp_o(r(c,s))
    =
    \left(
       \frac{c\sinh r}
            {\cosh r-s\sinh r},
       \frac{1}
            {\cosh r-s\sinh r}
    \right).
    \label{eq:hyperbolic-explicit-exp}
\end{equation}
Hence, if
\[
    X_t=(X_t^1,X_t^2)=\exp_o(B_t^H),
\]
then
\begin{equation}
    X_t^1
    =
    \frac{c_t\sinh r_t}
         {\cosh r_t-s_t\sinh r_t},
    \qquad
    X_t^2
    =
    \frac1{\cosh r_t-s_t\sinh r_t}.
    \label{eq:hyperbolic-exp-fbm-coordinates}
\end{equation}

Consider now
\[
    f(x_1,x_2)
    =
    \|(x_1,x_2)\|_2^{p}
    =
    (x_1^2+x_2^2)^m.
\]
A direct computation from
\eqref{eq:hyperbolic-exp-fbm-coordinates} gives
\begin{equation}
    (X_t^1)^2+(X_t^2)^2
    =
    \frac{
       \cosh r_t+s_t\sinh r_t
    }{
       \cosh r_t-s_t\sinh r_t
    }.
    \label{eq:euclidean-radius-hyperbolic-exp}
\end{equation}
Therefore
\begin{equation}
\boxed{
    f(X_t)
    =
    \left(
      \frac{
        \cosh r_t+s_t\sinh r_t
      }{
        \cosh r_t-s_t\sinh r_t
      }
    \right)^m .
}
\label{eq:f-hyperbolic-exp-fbm}
\end{equation}

Since $f(o)=1$, the change of variable formula
\eqref{eq:constant-curvature-fbm-cov} gives
\begin{equation}
\boxed{
\begin{split}
\left(
      \frac{
        \cosh r_T+s_T\sinh r_T
      }{
        \cosh r_T-s_T\sinh r_T
      }
\right)^m
-1
 =
\int_0^T
\left\langle
   T_\nabla^{2m-1}f(X_t),
   d_\pi^{\,2m-1}X_t
\right\rangle\\
+
\frac1{2^m m!}
\int_0^T
\left\langle
    \nabla^{(2m)}f(X_t),
    Q_t^{\otimes m}
\right\rangle\,dt 
\end{split}}
\label{eq:hyperbolic-fbm-COV}
\end{equation}
where
\begin{equation}
    Q_t
    =
    U_t^{\otimes2}
    +
    \left(
      \frac{\sinh r_t}{r_t}
    \right)^2
    V_t^{\otimes2}.
    \label{eq:hyperbolic-Q}
\end{equation}

Equivalently, the correction term in
\eqref{eq:hyperbolic-fbm-COV} is
\begin{equation}
\begin{split}
\frac1{2^m m!}
\int_0^T
\sum_{j=0}^m
{m\choose j}
\left(
  \frac{\sinh r_t}{r_t}
\right)^{2(m-j)}
&
\nabla^{(2m)}f(X_t)
\left(
 U_t^{\otimes2j},
 V_t^{\otimes2(m-j)}
\right)\,dt .
\end{split}
\label{eq:hyperbolic-explicit-correction}
\end{equation}
The factor $\frac{\sinh r_t}{r_t}$
is the transverse   distortion of the exponential map with
constant curvature $-1$. Thus the higher-order correction in the
change of variable formula detects the negative curvature of
$\mathbb H^2$ explicitly.

\paragraph{The quadratic case.}

For $p=2$, one has $H=1/2$, so $B^H$ is ordinary planar Brownian
motion and $    f(x_1,x_2)=x_1^2+x_2^2.$
In this case $d[X]_{\pi,t}^{(2)}=Q_t\,dt$
and
\begin{equation}
    f(X_T)-1
    =
    \int_0^T df(X_t)\,d_\pi^\nabla X_t
    +
    \frac12
    \int_0^T
    \left\langle
       \nabla^{(2)}f(X_t),Q_t
    \right\rangle dt .
    \label{eq:hyperbolic-p2-change}
\end{equation}

To make the last term completely explicit, set
\[
    A_t
    :=
    \cosh r_t-s_t\sinh r_t,
    \qquad
    \lambda_t
    :=
    \frac{\sinh r_t}{r_t}.
\]
Since $
    X_t^2=\frac1{A_t},$ and $ 
    \Delta_g f=4x_2^2$ 
while differentiation along the radial geodesic gives
\[
    \nabla^{(2)}f(U_t,U_t)
    =
    \frac{
       4s_t(s_t\cosh r_t-\sinh r_t)
    }{
       A_t^3
    },
\]
we obtain
\begin{equation}
\begin{split}
\frac12
\left\langle
  \nabla^{(2)}f(X_t),Q_t
\right\rangle
=
2\left\{
   \frac{\lambda_t^2}{A_t^2}
   +
   (1-\lambda_t^2)
   \frac{
      s_t(s_t\cosh r_t-\sinh r_t)
   }{
      A_t^3
   }
\right\}.
\end{split}
\label{eq:hyperbolic-p2-correction-explicit}
\end{equation}
Therefore
\begin{equation}
\frac{
   \cosh r_T+s_T\sinh r_T
}{
   \cosh r_T-s_T\sinh r_T
}
-1 =
\int_0^T df(X_t)\,d_\pi^\nabla X_t
+
2\int_0^T
\left\{
   \frac{\lambda_t^2}{A_t^2}
   +
   (1-\lambda_t^2)
   \frac{
      s_t(s_t\cosh r_t-\sinh r_t)
   }{
      A_t^3
   }
\right\}dt.
\label{eq:hyperbolic-p2-final}
\end{equation}
At $r_t=0$ the expressions above are understood by continuity.
\subsection{Fractional Brownian rotations on $\mathrm{SO}(3)$}
\label{sec:SO3}
Baudoin
and Coutin \cite{BaudouinCoutin} have proposed a construction of
Fractional Brownian motion in Lie groups for $H>1/4$. Here we study a simpler example of fractional process in a Lie group which allows to consider the entire range $0<H<1$, by applying an exponential lift of a (Euclidean) fractional Brownian motion $B^H$ in $\mathbb{R}^3$ 
 to the  Lie group $G=\mathrm{SO}(3)$.
We identify the Lie algebra
$\mathfrak g=\mathfrak{so}(3)=T_eG$  with $\mathbb R^3$ through 
\[
    v\longmapsto \widehat v,
    \qquad
    \widehat v\,w=v\times w,
\]
and equip $\mathfrak{so}(3)$ with the Ad-invariant inner product $    \langle A,B\rangle:= -\frac12\Tr(AB).$
Under the above identification this is the Euclidean scalar product
on $\mathbb R^3$, and
$   [\widehat u,\widehat v]= \widehat{u\times v}.$
We denote by $g$ the corresponding bi-invariant Riemannian metric on
$G$. Its Riemannian exponential at the identity coincides with the
Lie-group exponential.

Let $p=2m$, $m\geq1$, and let $\beta^i$ be independent fractional Brownian motion
with Hurst exponent $ H=\frac1{2m}.$ We set
\[
    B^H=(\beta^1,\beta^2,\beta^3)
\quad {\rm and}
\quad    X_t:=\exp(B_t^H).
\]
By Proposition~\ref{prop:fBm-p-variation},
\[
    d[B^H]_{\pi,t}^{(2m)}
    =
    \mathfrak m_{2m}\,dt,
    \qquad
    \mathfrak m_{2m}
    =
    (2m-1)!!
    \Sym\bigl((g_e^{-1})^{\otimes m}\bigr),
\]
and Proposition~\ref{prop:smooth-map-transformation} therefore gives
\[
    d[X]_{\pi,t}^{(2m)}
    =
    \left(
        D\exp\big|_{B_t^H}
    \right)^{\otimes 2m}
    \mathfrak m_{2m}\,dt.
\]
The differential of the exponential map may be computed explicitly.
For $v\in\mathbb R^3\simeq\mathfrak{so}(3)$, set
$ r=|v|.$
Rodrigues' formula gives
\[
    \exp(\widehat v)
    =
    I
    +
    \frac{\sin r}{r}\widehat v
    +
    \frac{1-\cos r}{r^2}\widehat v^{\,2}.
\]
Introduce the left-trivialized differential
\[
    \dexp_v
    :=
    (L_{\exp(-\widehat v)})_*
    \circ D\exp\big|_{\widehat v}
    :
    \mathfrak{so}(3)\longrightarrow\mathfrak{so}(3).
\]
It is given by
\[
    \dexp_v
    =
    \frac{1-e^{-\ad_v}}{\ad_v}
    =
    I
    -
    \frac{1-\cos r}{r^2}\widehat v
    +
    \frac{r-\sin r}{r^3}\widehat v^{\,2}.
\]
Here expressions at $r=0$ are understood by continuity.
If $v=rU$, with $|U|=1$, then
$\dexp_v U=U.$
On the orthogonal plane $U^\perp$, let
$    J_Uw:=U\times w.$
Since
\[
    \ad_v\big|_{U^\perp}=rJ_U,
    \qquad
    J_U^2=-I,
\]
we obtain, for $w\in U^\perp$,
\[
    \dexp_v w
    =
    \frac{2\sin(r/2)}{r}
    e^{-\frac r2J_U}w.
\]
Thus the differential leaves the radial direction unchanged, whereas
on the transverse plane it is the composition of a rotation through
angle $-r/2$ and the dilation
\[
    a(r):=\frac{2\sin(r/2)}{r}.
\]

Since the Gaussian tensor $\mathfrak m_{2m}$ is isotropic, the
rotational part disappears from its pushforward:
\[
    \widetilde Q_v
    :=
    (\dexp_v)^{\otimes2}g_e^{-1}=
    U^{\otimes2}
    +
    a(r)^2
    \bigl(g_e^{-1}-U^{\otimes2}\bigr).
\]
For $B_t^H\neq0$, write
\[
    r_t=|B_t^H|,
    \qquad
    U_t=\frac{B_t^H}{r_t},
\]
and define
\[
    Q_t
    :=
    (L_{X_t})_*^{\otimes2}
    \left[
        U_t^{\otimes2}
        +
        \left(
            \frac{2\sin(r_t/2)}{r_t}
        \right)^2
        \bigl(g_e^{-1}-U_t^{\otimes2}\bigr)
    \right],
\]
with $Q_t=g_{X_t}^{-1}$ when $r_t=0$. It follows that
\[
    d[X]_{\pi,t}^{(2m)}
    =
    (2m-1)!!
    \Sym\bigl(Q_t^{\otimes m}\bigr)\,dt.
\]
Consequently, for $f\in C^{2m}(G)$, the change-of-variable formula
takes the form
\[
\begin{aligned}
    f(X_T)-f(e)
    ={}&
    \int_0^T
    \left\langle
        T_\nabla^{2m-1}f(X_t),
        d_\pi^{2m-1}X_t
    \right\rangle
+
    \frac1{2^m m!}
    \int_0^T
    \left\langle
        \nabla^{(2m)}f(X_t),
        Q_t^{\otimes m}
    \right\rangle\,dt ,
\end{aligned}
\]
where $\nabla$ is the Levi--Civita connection of the bi-invariant
metric.
The factor $
    \frac{2\sin(r/2)}{r}$
measures the transverse distortion of the exponential map. In
particular, $D\exp|_v$ becomes singular in the transverse directions
when
    $|v|\in 2\pi\mathbb N.$
Thus the highest-order variation tensor records  the
degeneracy of exponential coordinates on the rotation group.
There is also a useful connection with the discussion of torsion in
Remark~\ref{rem.notorsion}. For $\lambda\in\mathbb R$, define a left-invariant affine
connection by
\[
    \nabla^\lambda_UV
    :=
    \lambda[U,V]
\]
for left-invariant vector fields $U,V$. Its torsion is
$ T^{\nabla^\lambda}(U,V)
    =
    (2\lambda-1)[U,V],$
whereas its symmetric part is the Levi--Civita connection,
\[
    \overline{\nabla^\lambda}_UV
    =
    \frac12[U,V].
\]
Hence all the connections $\nabla^\lambda$ have the same geodesics
and the same exponential map. Moreover, their symmetrized covariant
derivatives agree:
\[
    (\nabla^\lambda)^{(k)}f
    =
    (\nabla^{\mathrm{LC}})^{(k)}f,
    \qquad k\geq1.
\]
The variation tensor and the resulting change-of-variable formula are
therefore independent of $\lambda$, although the torsion varies with
$\lambda$. This gives a concrete Lie-group illustration of the fact
that torsion plays no role in the present construction.

\appendix
\section{Proof of Proposition \ref{prop:fBm-p-variation}}\label{sec.FBM}
 \begin{proof}
Let
$
    t_i^N:=\frac{iT}{N},
    \qquad 0\leq i\leq N,$
and define the normalized increments
\[
    Z_i^N
    :=
    \left(\frac{N}{T}\right)^H
    \left(
        X^H_{t_{i+1}^N}-X^H_{t_i^N}
    \right),
    \qquad 0\leq i<N.
\]
For each $N$, the random vectors $Z_i^N$ are centered Gaussian with
\begin{equation}
    \E\!\left[
        Z_i^{N,a}Z_j^{N,b}
    \right]
    =
    \delta_{ab}\rho_H(i-j),\quad
{\rm where}\quad
    \rho_H(k)
    :=
    \frac12
    \left(
        |k+1|^{2H}
        +
        |k-1|^{2H}
        -
        2|k|^{2H}
    \right).
    \label{eq:fGn-covariance}
\end{equation}
In particular, each $Z_i^N$ has the standard Gaussian law
$N(0,I_d)$.
Since $H=1/p\leq  1/2$, the sequence $\rho_H$ is absolutely summable.
For $p>2$ the second-order finite difference in
\eqref{eq:fGn-covariance} gives
$
    |\rho_H(k)|
    \leq C_H |k|^{2H-2},$
with $2H-2<-1$. Hence
\begin{equation}
    \sum_{k\in\mathbb Z}|\rho_H(k)|<\infty .
    \label{eq:fGn-summable}
\end{equation}
Let
$   Z\sim N(0,I_d),
    \quad
    \mathfrak m_p:=\E[Z^{\otimes p}]
    \in\Sym^p(\mathbb R^d).$
Fix $
    A\in\Sym^p((\mathbb R^d)^*)$
and define the polynomial
\[
    F_A(z):=A(z^{\otimes p}),
    \qquad
    \widetilde{F_A}(z)
    :=
    F_A(z)-A(\mathfrak m_p).
\]
We first establish the required convergence for a fixed $A$ and a
fixed $\varphi\in C([0,T])$.
Expand $\widetilde F_A$ in the multivariate Hermite basis:
\[
    \widetilde F_A(z)
    =
    \sum_{1\leq|\alpha|\leq p}
    c_\alpha H_\alpha(z).
\]
If $U$ and $V$ are standard Gaussian vectors in $\mathbb R^d$ with
$
    \E[U^aV^b]=r\,\delta_{ab},$
then the generating function of the Hermite polynomials gives
\[
    \E[H_\alpha(U)H_\beta(V)]
    =
    \mathbf 1_{\{\alpha=\beta\}}
    \alpha!\,r^{|\alpha|}.
\]
Consequently,
\[
    \gamma_A(k)
    :=
    \Cov\!\left(
        F_A(Z_0^N),F_A(Z_k^N)
    \right)
    =
    \sum_{1\leq|\alpha|\leq p}
    c_\alpha^2\alpha!\,
    \rho_H(k)^{|\alpha|}.
\]
Since $|\rho_H(k)|\leq1$, it follows from
\eqref{eq:fGn-summable} that
\begin{equation}
    \sum_{k\in\mathbb Z}|\gamma_A(k)|<\infty .
    \label{eq:polynomial-covariance-summable}
\end{equation}

Let
\[
    S_N(A,\varphi)
    :=
    \frac1N
    \sum_{i=0}^{N-1}
    \varphi(t_i^N)\,
    \widetilde F_A(Z_i^N).
\]
Using \eqref{eq:polynomial-covariance-summable},
\begin{align*}
    \E|S_N(A,\varphi)|^2
    &=
    \frac1{N^2}
    \sum_{i,j=0}^{N-1}
    \varphi(t_i^N)\varphi(t_j^N)
    \gamma_A(i-j)
    \leq
    \frac{\|\varphi\|_\infty^2}{N}
    \sum_{k\in\mathbb Z}|\gamma_A(k)|
    \leq
    \frac{C_{A,\varphi}}{N}.
\end{align*}
By Nelson's hypercontractivity estimate \cite[Sec. 3]{nelson1973}, if $P$ is
a polynomial of degree at most $p$ of a finite-dimensional Gaussian
vector, then
\[
    \|P\|_{L^4}\le 3^{p/2}\|P\|_{L^2},
\]
and hence
\begin{equation}
    \E |P|^4
    \le
    3^{2p}\bigl(\E |P|^2\bigr)^2.
    \label{eq:Gaussian-polynomial-fourth-moment}
\end{equation}
Since
$S_N(A,\varphi)$ is a centered polynomial of degree at most $p$ in
the Gaussian family $
    (Z_0^N,\ldots,Z_{N-1}^N),$
\eqref{eq:Gaussian-polynomial-fourth-moment} yields
\[
    \E|S_N(A,\varphi)|^4
    \leq
    \frac{C_{A,\varphi}}{N^2}.
\]
Therefore, for every $\varepsilon>0$,
\[
    \sum_{N=1}^\infty
    \mathbb{P}\bigl(
        |S_N(A,\varphi)|>\varepsilon
    \bigr)
    <\infty.
\]
By the Borel--Cantelli lemma, $S_N(A,\varphi)\to 0$ almost surely.
Since $Hp=1$,
\[
    \left(
        X^H_{t_{i+1}^N}-X^H_{t_i^N}
    \right)^{\otimes p}
    =
    \frac{T}{N}(Z_i^N)^{\otimes p}.
\]
The almost-sure convergence $S_N(A,\varphi)\to 0$ together with the 
Riemann-sum convergence, give
\begin{align}
&
    \sum_{i=0}^{N-1}
    \varphi(t_i^N)
    A\!\left(
        \left(
            X^H_{t_{i+1}^N}-X^H_{t_i^N}
        \right)^{\otimes p}
    \right)\nonumber\\
    =&
    \frac{T}{N}
    \sum_{i=0}^{N-1}
    \varphi(t_i^N)F_A(Z_i^N)
    \mathop{\longrightarrow}^{N\to\infty}
    A(\mathfrak m_p)
    \int_0^T\varphi(t)\,dt
    \qquad\text{a.s.}
    \label{eq:fBm-fixed-test}
\end{align}
We next establish finite $p$-energy. Consider the polynomial
$ G(z):=|z|^{2p}.$\\
Repeating the preceding argument with $G-\E G(Z)$ and
$\varphi\equiv1$ gives
\[
    \frac1N
    \sum_{i=0}^{N-1}|Z_i^N|^{2p}
    \mathop{\longrightarrow}^{N\to\infty}
    \E|Z|^{2p}
    \qquad\text{almost surely}.
\]
Therefore, by Cauchy--Schwarz,
\[
    \frac1N
    \sum_{i=0}^{N-1}|Z_i^N|^p
    \leq
    \left(
        \frac1N
        \sum_{i=0}^{N-1}|Z_i^N|^{2p}
    \right)^{1/2},
\]
so that, almost surely,
\[
    \sup_N
    \frac1N
    \sum_{i=0}^{N-1}|Z_i^N|^p
    <\infty.
\]
Using again $Hp=1$,
\[
    \sum_{i=0}^{N-1}
    \left|
        X^H_{t_{i+1}^N}-X^H_{t_i^N}
    \right|^p
    =
    \frac{T}{N}
    \sum_{i=0}^{N-1}|Z_i^N|^p,
\]
and hence
\begin{equation}
    \sup_N
    \sum_{i=0}^{N-1}
    \left|
        X^H_{t_{i+1}^N}-X^H_{t_i^N}
    \right|^p
    <\infty
    \qquad\text{almost surely}.
    \label{eq:fBm-finite-p-energy}
\end{equation}

It remains to choose a single probability-one event on which the
tensor-valued convergence holds for every continuous test field.
Let $
    A_1,\ldots,A_m$
be a basis of the finite-dimensional space
$\Sym^p((\mathbb R^d)^*)$, and let
$    \mathcal D\subset C([0,T])$
be a countable subset which is dense for the uniform norm.
Applying \eqref{eq:fBm-fixed-test} to the countable collection of
pairs $(A_j,\varphi),1\leq j\leq m,\quad
    \varphi\in\mathcal D,$
and intersecting the corresponding probability-one events with the
event in \eqref{eq:fBm-finite-p-energy}, we obtain a single event
$\Omega_0$ of probability one on which all these convergences and
the uniform $p$-energy bound hold simultaneously.
Fix $\omega\in\Omega_0$. For $\varphi,\psi\in C([0,T])$,
\begin{align*}
&
\left|
    \sum_{i=0}^{N-1}
    (\varphi-\psi)(t_i^N)
    A_j\!\left(
        \left(
            X^H_{t_{i+1}^N}-X^H_{t_i^N}
        \right)^{\otimes p}
    \right)
\right|
\leq
    \|A_j\|\,\|\varphi-\psi\|_\infty
    \sum_{i=0}^{N-1}
    \left|
        X^H_{t_{i+1}^N}-X^H_{t_i^N}
    \right|^p.
\end{align*}
The last factor is uniformly bounded in $N$ by
\eqref{eq:fBm-finite-p-energy}. Uniform approximation by elements of
$\mathcal D$ therefore extends
\eqref{eq:fBm-fixed-test} from $\mathcal D$ to every
$\varphi\in C([0,T])$, simultaneously for $A_1,\ldots,A_m$.

Finally, every continuous tensor field
$    A_\cdot
    \in
    C\!\left(
        [0,T];
        \Sym^p((\mathbb R^d)^*)
    \right)$
has a unique representation
\[
    A_t
    =
    \sum_{j=1}^m\varphi_j(t)A_j,
    \qquad
    \varphi_j\in C([0,T]).
\]
By linearity,
\[
    \sum_{i=0}^{N-1}
    A_{t_i^N}
    \!\left(
        \left(
            X^H_{t_{i+1}^N}-X^H_{t_i^N}
        \right)^{\otimes p}
    \right)
    \longrightarrow
    \int_0^T A_t(\mathfrak m_p)\,dt
\]
on $\Omega_0$. Thus
\[
    \sum_{i=0}^{N-1}
    \left(
        X^H_{t_{i+1}^N}-X^H_{t_i^N}
    \right)^{\otimes p}
    \delta_{t_i^N}
    \stackrel{*}{\rightharpoonup}
    \mathfrak m_p\,dt
\]
as a $\Sym^p(\mathbb R^d)$-valued measure.
Together with \eqref{eq:fBm-finite-p-energy}, this proves 
$
    X^H\in V_p(\mathbb R^d,\pi)
$
and
$
    d[X^H]_{\pi,t}^{(p)}
    =
    \mathfrak m_p\,dt$ 
almost surely.
\end{proof}

\end{document}